\newcommand{\R}{\mathbb{R}}
\newcommand{\E}{\mathbb{E}}
\newcommand{\PP}{\mathbb{P}}
\newcommand{\N}{\mathbb{N}}
\newcommand{\Q}{\mathbb{Q}}
\newcommand{\F}{\mathbb{F}}
\newcommand{\1}{\mathds{1}}
\renewcommand{\P}{\mathbb P}
\newcommand{\f}{\mathcal F}
\newcommand{\lll}{\mathcal L}
\newcommand{\toop}{\stackrel{\PP}{\longrightarrow}}
\newcommand{\eqschw}{\stackrel{d}{=}}
\newcommand{\stab}{\stackrel{\mathcal{L}-s}{\longrightarrow}}
\newcommand{\tols}{\stackrel{\lll- s}{\longrightarrow}~}
\newcommand{\tol}{\stackrel{\lll}{\longrightarrow}}
\newcommand{\ucp}{\xrightarrow{\mbox{\tiny u.c.p.}}}
\newcommand{\toas}{\stackrel{\mbox{\tiny a.s.}}{\longrightarrow}}
\newcommand{\bee}{\begin{equation}}
\newcommand{\eee}{\end{equation}}
\newcommand{\beea}{\begin{array}}
\newcommand{\eeea}{\end{array}}
\renewcommand{\theequation}{\arabic{section}.\arabic{equation}}
\theoremstyle{plain}
\newtheorem{prop}{Proposition}[section]
\newtheorem{cor}[prop]{Corollary}
\newtheorem{theo}[prop]{Theorem}
\newtheorem{lem}[prop]{Lemma}
\theoremstyle{definition}
\newtheorem{rem}[prop]{Remark}
\newcommand{\din}{\Delta_{i,k}^n}
\newcommand{\suonetn}{\sum_{i=k}^{[nt]}}
\newcommand{\lj}{{\scriptscriptstyle >a}}
\newcommand{\sj}{{\scriptscriptstyle \leq a}}
\newcommand{\kappal}{{\scriptscriptstyle \geq \kappa}}
\newcommand{\kappas}{{\scriptscriptstyle <\kappa}}
\newcommand{\llj}{{\scriptscriptstyle >a,> -\delta}}
\newcommand{\slj}{{\scriptscriptstyle >a,\leq -\delta}}
\newcommand{\ldl}[2]{\ensuremath{{\bf L}^{#1}( d #2)}}
\newcommand{\ldlnr}[2]{\ensuremath{{\bf L}_{\text{nr}}^{#1}( d #2)}}
\newcommand{\eps}{\varepsilon}
\newcommand{\skorcon}{ \xrightarrow{\tiny \lll_{M_1}-s}}
\newcommand{\diff}{\; d}
\newcommand{\wt}{\widetilde}
\newcommand{\eqspace}{\hspace{4em}}
\begin{document}

\title{On  limit theory for L\'evy semi-stationary  processes}
\author{Andreas Basse-O'Connor\thanks{Department
of Mathematics, Aarhus University, 
E-mail: basse@math.au.dk.}  \and
Claudio Heinrich\thanks{Department
of Mathematics, Aarhus University, 
E-mail: claudio.heinrich@math.au.dk}  \and
Mark Podolskij\thanks{Department
of Mathematics, Aarhus University,
E-mail: mpodolskij@math.au.dk.}}

\maketitle

\begin{abstract}
In this paper we present some limit theorems for power variation   
of  L\'evy semi-stationary  processes in the setting of infill asymptotics. 
L\'evy semi-stationary  processes, which are a one-dimensional analogue 
of ambit fields, are moving average type processes with a multiplicative 
random component, which is usually referred to as volatility or intermittency.
From the mathematical point of view this work extends the asymptotic theory
investigated in \cite{BLP}, where  the authors derived the limit theory 
for $k$th order increments of  
stationary increments L\'evy driven moving averages. The asymptotic results turn out to
heavily depend 
on the interplay between the given order of the increments,
the considered power $p>0$, the Blumenthal--Getoor index $\beta \in (0,2)$ of the driving pure jump 
L\'evy process $L$ and the behaviour of the kernel function
$g$ at $0$ determined by the power $\alpha$. In this paper we will study the 
first order asymptotic theory for L\'evy semi-stationary  processes with a random volatility/intermittency
component and present some statistical applications of the probabilistic results.

\ \

{\it Key words}: \
Power variation, L\'evy semi-stationary  processes, limit theorems,   stable convergence, high frequency data.\bigskip

{\it AMS 2010 subject classifications.} Primary~60F05,~60F15,~60G22;
secondary~60G48, ~60H05.

\end{abstract}

\section{Introduction and main results} \label{Intro}
\setcounter{equation}{0}
\renewcommand{\theequation}{\thesection.\arabic{equation}}

Over the last ten years there has been a growing interest in the theory of ambit fields.
Ambit fields is a class of spatio-temporal stochastic processes that has been
originally introduced by Barndorff-Nielsen and Schmiegel in a series of papers \cite{BS07, BS08a, BS09} in
the context of turbulence modelling, but which has found manifold applications in 
mathematical finance and biology among other sciences; see e.g. \cite{BBC, BJJS07}.

Ambit processes describe the dynamics in a stochastically developing
field, for instance a turbulent wind field, along curves embedded in
such a field.   A key characteristic of the modelling framework is that
beyond the most basic kind of random noise it also specifically
incorporates additional, often drastically changing, inputs referred
to as \textit{volatility} or \textit{intermittency}.  In terms of mathematical formulae an ambit field is specified via
\begin{align} \label{ambfield}
  X_{t}(x)=\mu +\int_{A_{t}(x)}g(t,s,x,\xi )\sigma_{s}(\xi )\,L(d s, d
  \xi )+\int_{D_{t}(x)}q(t,s,x,\xi )a_{s}(\xi )\,d s\,d \xi,
\end{align}
where $t$ denotes time while $x$\ gives the position in
space. Further, $A_{t}(x)$ and $D_{t}(x)$ are Borel measurable subsets of $\R\times \R^d$, $g$ and $q$
are deterministic weight functions, $\sigma$ represents the volatility or
intermittency field, $a$ is a drift field and $L$ denotes an independently scattered 
infinitely divisible random measure on $\R\times \R^d$ (see e.g. \cite{RajRos} for details).  In the literature, the sets  $A_{t}(x)$ and $D_{t}(x)$ are usually referred to as \textit{ambit sets}. 
In the framework 
of turbulence modelling  the stochastic field $(X_{t}(x))_{t\geq 0,\, x\in \R^3}$ describes the velocity
of a turbulent flow at time $t$ and position $x$, while the ambit sets $A_{t}(x), D_{t}(x)$ are typically  
bounded. 

In this paper we will consider a purely temporal analogue of ambit fields (without drift)
$(X_t)_{t \in \R}$, defined on a filtered probability space $(\Omega, \mathcal F, (\mathcal F_t)_{t\in\R},  \mathbb P)$, which is given 
as  
\begin{align} \label{lss}
X_t= \int_{-\infty}^t \big\{g(t-s)-g_0(-s)\big\} \sigma_{s-}\, dL_s,
\end{align} 
and is usually referred to as a \textit{L\'evy semi-stationary process}. Here $L=(L_t)_{t\in \R}$ is a symmetric L\'evy process on $\R$ with respect to $(\mathcal F_t)_{t\in\R}$ with $L_0=0$ and 
without a Gaussian component. That is, for all $u\in\R,$ the process $(L_{t+u}-L_u)_{t\geq 0}$ is a symmetric L\'evy process on $\R_+$ with respect to $(\mathcal F_{t+u})_{t\geq 0}$. Moreover, $(\sigma_t)_{t\in \R}$
is a c\`adl\`ag process adapted to $(\mathcal F_t)_{t\in\R}$, and $g$ and $g_0$ are deterministic functions from $\R$ into $\R$  vanishing on $(-\infty,0)$. The name L\'evy semi-stationary process refers to the fact that the process 
$(X_t)_{t \in \R}$ is stationary whenever $g_0=0$ and $(\sigma_t)_{t\in \R}$ is stationary and  independent of $(L_t)_{t\in \R}$. It is assumed throughout this paper that $g,g_0, \sigma$ and $L$ are such that the process $(X_t)$ is well-defined, which will in particular be satisfied under the conditions stated in   Remark~\ref{Xintegrable} below. We are interested in the asymptotic behaviour of 
power variation of the process $X$. More precisely, let us consider the $k$th order increments 
$\Delta_{i,k}^{n} X$ of $X$, $k\in \N$, that are defined by 
\begin{align} \label{filter}
\Delta_{i,k}^{n} X:= \sum_{j=0}^k (-1)^j \binom{k}{j} X_{(i-j)/n}, \qquad i\geq k.
\end{align}
For instance, we have that $\Delta_{i,1}^n X = X_{\frac{i}{n}}-X_{\frac{i-1}{n}}$ and $\Delta_{i,2}^n X = X_{\frac{i}{n}}-
2X_{\frac{i-1}{n}}+X_{\frac{i-2}{n}}$. The main functional of interest  
is the power variation processes computed on the basis of $k$th order increments:
\begin{align} \label{vn}
V(p;k)^n_t := \sum_{i=k}^{[nt]} |\Delta_{i,k}^{n} X|^p, \qquad p>0.
\end{align}
At this stage we remark that power variation of stochastic processes has been a very active research 
area in the last decade. We refer e.g.\  to \cite{BGJPS,J,JP,PV} for limit theory for power variations of It\^o semimartingales, to \cite{BNCP09,BNCPW09,Coeu,gl89,nr09} 
for the asymptotic results in the framework of fractional Brownian motion and related processes, and to \cite{Rosenblatt, Rosenblatt-1} for investigations of power variation of the Rosenblatt process. More specifically, power variation of Brownian semi-stationary processes, which is the model \eqref{lss}
driven by a Brownian motion,  has been studied in \cite{BCP11,BCP13,GP}. Under proper
normalisation the authors have shown convergence in probability for the statistic $V(p;k)^n_t$ and proved its asymptotic mixed normality. 

However, when the driving motion in \eqref{lss} is a pure jump L\'evy process, the asymptotic theory is very different from the Brownian case. To see this, let us recall the first order asymptotic results investigated in  \cite{BLP}, who studied power variation of  a class of L\'evy semi-stationary processes 
with $\sigma=1$ and $t=1$.  Throughout the paper 
we will need the notion of \textit{Blumenthal--Getoor index} of $L$, which is defined via
\begin{align} \label{def-B-G}
\beta:=\inf\Big\{r\geq 0: \int_{-1}^1 |x|^r\,\nu(dx)<\infty\Big\}\in [0,2],
\end{align} 
where $\nu$ denotes the L\'evy measure of $L$. It is well-known that $\sum_{s \in [0,1]}
|\Delta L_s|^p$ is finite when $p>\beta$, while it is infinite for $p<\beta$.  Here $\Delta L_s=L_s-L_{s-}$ where $L_{s-}=\lim_{u\uparrow s,\,u<s} L_u$. The paper  \cite{BLP} imposes
the following set of assumptions on $g$, $g_0$ and $\nu$:
   
\noindent \textbf{Assumption~(A):}
The function $g\!:\R\to\R$ satisfies 
\begin{align}\label{kshs}
g(t)\sim c_0 t^\alpha\qquad \text{as } t\downarrow 0\quad \text{for some }\alpha>0\text{ and }c_0\neq  0, 
\end{align}
where $g(t)\sim f(t)$ as $t\downarrow 0$ means that $\lim_{t\downarrow 0}g(t)/f(t)= 1$. 
For some $\theta\in (0,2]$, $\limsup_{t\to \infty} \nu(x\!:|x|\geq t) t^{\theta}<\infty$ and $g - g_0$ is a bounded function in $L^{\theta}(\R_+)$.
 Furthermore,  $g$ is $k$-times continuously differentiable on $(0,\infty)$ and there  exists a $\delta>0$ such that  $|g^{(k)}(t)|\leq C t^{\alpha-k}$ for all $t\in (0,\delta)$, and such that both $|g'|$ and $|g^{(k)}|$ are in $L^\theta((\delta,\infty))$ and  are decreasing on $(\delta,\infty)$.

\noindent \textbf{Assumption~(A-log):}
In addition to  (A) suppose that $\int_\delta^\infty |g^{(k)}(s)|^\theta \log(1/|g^{(k)}(s)|)\,ds<\infty$. 

Assumption~(A) ensures, in particular,  that the process $X$ with $\sigma=1$ is well-defined, cf.\ \cite{BLP}.  
When $L$ is a $\beta$-stable L\'evy process, we can and will always choose $\theta = \beta$ in assumption (A).
Before we proceed with the main statement of \cite{BLP}, we need some more notation. 
Let $h_k\!:\R\to\R$ be given by 
\begin{align}\label{def-h-13}
h_k(x)&=  \sum_{j=0}^k (-1)^j \binom{k}{j} (x-j)_{+}^{\alpha},\qquad x\in \R,
\end{align} 
where  $y_+=\max\{y,0\}$ for all $y\in \R$. 
Let $\F=(\f_t)_{t\geq 0}$ and $(T_m)_{m\geq 1}$  be a sequence of $\F$-stopping times  that exhausts the jumps of $(L_t)_{t\geq 0}$. That is,  $ \{T_m(\omega):m\geq 1\}\cap \R_+ = \{t\geq 0: \Delta L_t(\omega)\neq 0\}$ and  $T_m(\omega)\neq T_n(\omega)$ for all $m\neq n$ with $T_m(\omega)<\infty$. Let     $(U_m)_{m\geq 1}$ be independent and uniform $[0,1]$-distributed random variables, defined on an extension $(\Omega', \mathcal F', \mathbb P')$ of the original probability space,
which are independent of $\f$. 

The following  first order limit theory for the power variation $V(p;k)_1^n$ has been proved 
in \cite{BLP} for $\sigma \equiv 1$. We refer to \cite{Aldous,ren}
for the definition of $\mathcal F$-stable convergence in law which will be denoted $\stab$. Moreover,   $\toop$ will denote convergence in probability.  
   
\begin{theo}[First order asymptotics \cite{BLP}] \label{maintheorem}
Suppose that $X=(X_t)_{t\geq 0}$ is a stochastic process defined by \eqref{lss} with $\sigma \equiv 1$. Suppose (A) is satisfied and assume that the Blumenthal--Getoor index satisfies $\beta<2$. 
Set $V(p;k)^n:=V(p;k)^n_1$.
We have  the following three cases:
 
\begin{itemize}
\item[(i)] \label{mt-case-1} Suppose that (A-log) holds if  $\theta=1$. If $\alpha <k-1/p$ and $p>\beta$ then  the $\f$-stable convergence holds as $n\to\infty$ 
\begin{equation} \label{part1}
n^{\alpha p}V(p;k)^n \stab |c_0|^p\!\!\!\!\! \!\sum_{m:\, T_m\in [0,1]} |\Delta L_{T_m}|^p V_m\quad\text{where}\quad 
V_m=\sum_{l=0}^{\infty} |h_k(l+U_m)|^p.
\end{equation} 
\item[(ii)] \label{mt-case-2} Suppose that  $L$ is a symmetric $\beta$-stable L\'evy process with scale parameter $\gamma>0$. If  $\alpha <k-1/\beta$ and   $p<\beta$ then it holds
\begin{align} \label{part2}
n^{-1+p(\alpha + 1/\beta)}V(p;k)^n \toop m_p
\end{align}
where  $m_p=|c_0|^p \gamma^p (\int_\R |h_k(x)|^\beta\,dx)^{p/\beta}\E[|Z|^p]$ and $Z$ is a symmetric $\beta$-stable random variable with scale parameter $1$. 

\item[(iii)] \label{mt-case-3} Suppose that $p\geq 1$. If  $p= \theta$ suppose in addition that (A-log) holds. For all    $\alpha>k-1/( \beta\vee p)$ 
we have that 
\begin{equation} \label{part3}
  n^{-1+pk}V(p;k)^n \toop \int_0^1 |F_u|^p\, du 
 \end{equation}
where $(F_u)_{u\in \R}$ is a measurable process satisfying
\begin{equation}
F_u=  \int_{-\infty}^u g^{(k)}(u-s) \,dL_s\quad \text{a.s.\ for all }u\in \R \quad \text{and}\quad \int_0^1 |F_u|^{p}\,du<\infty\quad \text{a.s.}
\end{equation} 
\end{itemize}
\end{theo}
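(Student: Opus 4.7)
The plan is to treat the three cases separately, each dictated by which feature of \eqref{lss} dominates: the large jumps of $L$, the stable scaling of $L$, or the local smoothness of the kernel $g$ at the origin. A recurrent tool will be the Blumenthal--Getoor truncation $L = L^\ep + M^\ep$, where $L^\ep$ is a compound Poisson process carrying the jumps of $L$ of size at least $\ep$ and $M^\ep$ is the remaining purely-discontinuous martingale, with the corresponding decomposition $X = X^\ep + Y^\ep$.

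For case (i), the large-jump component is expected to drive the limit. If a jump time $T_m$ of $L^\ep$ falls in $[(i-1)/n, i/n)$ and one sets $U_m^n := n T_m - (i-1) \in [0,1)$, then using $g(s)\sim c_0 s^\alpha$ as $s\downarrow 0$ one obtains
\begin{align}
\Delta^n_{i+l,k} X^\ep \;\approx\; c_0\, n^{-\alpha}\, h_k(l + U_m^n)\, \Delta L_{T_m}, \qquad l=0,1,2,\ldots.
\end{align}
Summing $|\cdot|^p$ over $l$ and then over jumps produces the right-hand side of \eqref{part1}; the assumption $\alpha<k-1/p$ guarantees $h_k(x)=O(x^{\alpha-k})$ at infinity, hence summability of $V_m$, while $p>\beta$ controls the small-jump remainder. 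Stable convergence follows because $U_m^n \to U_m$ with $(U_m)$ independent of $\f$ on the extension. The main obstacle is showing that the contribution of $Y^\ep$ is negligible as $\ep\downarrow 0$ after multiplication by $n^{\alpha p}$; this requires delicate $L^\theta$-moment bounds on stochastic integrals against $M^\ep$, for which assumption (A) and, at $\theta=1$, the extra log-integrability in (A-log) are tailor-made.

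For case (ii), I would exploit the self-similarity $(L_{t/n})_{t\geq 0} \eqschw n^{-1/\beta} (L_t)_{t\geq 0}$ of the symmetric $\beta$-stable driver. Rescaling inside the stochastic integral defining $\Delta^n_{i,k}X$ and using $g(t)\sim c_0 t^\alpha$ yields the distributional approximation
\begin{align}
n^{\alpha+1/\beta}\,\Delta^n_{i,k} X \;\approx\; c_0\, \gamma\, \Big(\int_\R |h_k(x)|^\beta dx\Big)^{1/\beta}\, Z_i,
\end{align}
with $Z_i$ standard symmetric $\beta$-stable and $(Z_i)_i$ stationary (the finiteness of $\int |h_k|^\beta$ is exactly $\alpha<k-1/\beta$). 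Since $p<\beta$ implies $\E|Z|^p<\infty$, a law of large numbers for $(|Z_i|^p)$ will give the deterministic limit $m_p$ in \eqref{part2}. The main difficulty is this LLN step: one must upgrade a marginal distributional approximation of each $\Delta^n_{i,k}X$ to a joint convergence of the averaged sum, typically via a moving-average representation of the rescaled increments and a mixingale/Cauchy argument.

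For case (iii), the regime $\alpha>k-1/(\beta\vee p)$ forces enough smoothness that the $k$-th order finite difference of $g$ is governed by $g^{(k)}$:
\begin{align}
\sum_{j=0}^k(-1)^j\binom{k}{j} g\Big(\tfrac{i-j}{n}-s\Big) \;\approx\; n^{-k}\, g^{(k)}\Big(\tfrac{i}{n}-s\Big),
\end{align}
so that $n^k \Delta^n_{i,k}X \to F_{i/n}$ in a suitable $L^p$-sense. Then $n^{-1+pk} V(p;k)^n_t = n^{-1}\sum_{i=k}^{[nt]} |n^k \Delta^n_{i,k}X|^p$ is a Riemann sum converging in probability to $\int_0^t |F_u|^p\, du$ by $L^p$-continuity of $u\mapsto F_u$, which yields \eqref{part3}. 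The hard part is the singular region $s\uparrow i/n$ where Taylor's expansion degenerates: here one splits off a boundary contribution and bounds it via stable-type moment inequalities for integrals against $L$; the threshold $\alpha>k-1/(\beta\vee p)$ is precisely what keeps these finite, and (A-log) closes the borderline case $p=\theta$.
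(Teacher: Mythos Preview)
This theorem is not proved in the present paper: it is quoted verbatim from \cite{BLP}, and the paper's own contribution is the extension in Theorem~\ref{theorem-sigma}. So there is no ``paper's own proof'' to match against directly. That said, the paper's proof of Theorem~\ref{theorem-sigma} specialised to $\sigma\equiv 1$ effectively reproves parts of Theorem~\ref{maintheorem}, and your sketch lines up well with that machinery.

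For case~(i) your big-jump/small-jump decomposition $L=L^\ep+M^\ep$ and the local expansion $\Delta^n_{i+l,k}X^\ep\approx c_0 n^{-\alpha}h_k(l+U_m^n)\Delta L_{T_m}$ is exactly the route taken in \cite{BLP} and mirrored in Section~\ref{ComPoiCas}--\ref{SmaJum} here; the negligibility of the small-jump part is handled via the $\Phi_{q,L}$ moment estimates of Section~\ref{sec2}, which is the precise content of your ``delicate $L^\theta$-moment bounds'' remark. For case~(iii) your Taylor/Riemann-sum picture is again the right one; the paper makes it rigorous by a stochastic Fubini argument (Lemma~\ref{Fub}) showing $X$ has a $k$-times differentiable modification with derivative $F$, after which Riemann approximation is immediate.

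Case~(ii) is where your outline is thinnest. The self-similarity heuristic gives the correct marginal scaling, but the LLN for $(|Z_i|^p)$ is not a soft step: the rescaled increments $n^{\alpha+1/\beta}\Delta^n_{i,k}X$ are, for fixed $n$, only approximately a segment of a stationary $\beta$-stable moving average, and one must both control the approximation error and invoke ergodicity of the limiting stationary sequence (cf.\ \cite{Ergodic-Cam}) to obtain Birkhoff-type convergence. The present paper does not revisit this argument---it simply calls Theorem~\ref{maintheorem}(ii) inside the blocking proof of Theorem~\ref{theorem-sigma}(ii)---so for the actual details you would need to consult \cite{BLP}. Your acknowledgement that the LLN is ``the main difficulty'' is accurate; just be aware that ``mixingale/Cauchy'' is not quite the mechanism used there.
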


The aim of this work is twofold. Firstly, we extend Theorem \ref{maintheorem} to L\'evy semi-stationary processes with a 
non-trivial volatility process  $\sigma$. Such extensions are important in applications, say in the framework of turbulence, 
since the volatility $\sigma$ are often of key importance.  Secondly,  we show that the convergence in all the three cases are functional in  the Skorokhod topology or in the uniform norm, see Theorem~\ref{theorem-sigma} below.  
As we will see later, first order asymptotic theory for L\'evy semi-stationary processes can be used to draw inference on the parameters $\alpha$, $\beta$ and 
on certain volatility functionals in the context of high frequency observations, see Section~\ref{sec3}.  Furthermore, 
this type of limit theory is an intermediate step towards asymptotic results for general ambit fields of the form    \eqref{ambfield}. We remark that, in contrast to the Brownian setting, the extension  of
Theorem \ref{maintheorem} to L\'evy semi-stationary processes is a more complex issue. This is due 
to the fact that it is harder to estimate various norms of $X$ and related processes when the driving process $L$ is a L\'evy process. Our estimates on  $X$ rely heavily on  decoupling techniques and isometries for stochastic integral mappings presented in the book of Kwapi\'en and Woyczy\'nski~\cite{KW},  see Section~\ref{sec2} for more details.  
To state our main result we introduce the following assumptions. 

\bigskip
\noindent
\textbf{Assumptions:} Throughout this paper we suppose  that (A) holds  and let $(H_s)_{s\in \R}$ denote the stochastic process  $H_s= g^{(k)}(-s)\sigma_s\mathds 1_{(-\infty,-\delta]}(s)$, $s\in \R$, where the constant $\delta$ is defined in assumption (A).  We now state two  additional  integrability assumptions on process $H$ to be used in  Theorem~\ref{theorem-sigma} below.

\medskip
\noindent
\textbf{Assumption~(B1):}
Suppose  there exists $\rho>0$ with $\rho\leq 1\wedge \theta$ and $\beta'>0$ with 
$\beta'>\beta$ and $\beta'\geq  p$ such that 
\begin{equation}\label{eq:A-b1}
\E\Big[ \Big( \int_{\R} \big(|H_s|^\rho \vee |H_s|^{\beta'} \big) \,ds          \Big)^{1\vee \frac{p}{2}}\Big]<\infty. 
\end{equation} 
For $\theta=1$ suppose in addition that we may choose $\rho<1$ in \eqref{eq:A-b1}.  

\medskip
\noindent
\textbf{Assumption~(B2):} Suppose that 
\begin{equation}
 \E\Big[ \Big(\int_{\R} |H_s|^{\beta}  \,ds  \Big)        \Big]<\infty. 
\end{equation}

Assumption~(B2) will only be used  in case $L$ is a symmetric $\beta$-stable L\'evy process, where 
we have $\theta=\beta$. In this case we note that (B1) is a stronger assumption than (B2).  
Let in the following $\mathbb D(\R_+;\R)$ denote the Skorokhod space of c\`adl\`ag functions from $\R_+$ into $ \R,$ equipped with the Skorokhod $M_1$-topology. For a detailed introduction and basic properties of this space we refer to \cite[Chapter 11.5]{Whitt}. We denote by $\skorcon$ the $\mathcal F$-stable convergence of c\`adl\`ag stochastic processes, regarded as random variables taking values in the Polish space $\mathbb D(\R_+;\R)$. By $\ucp$ we denote uniform convergence on compact sets in probability. That is, $(Y_t^n)_{t\geq 0}\ucp (Y_t)_{t\geq 0}$ as $n\to\infty$ means that $$\P(\sup_{t\in [0,N]}|Y^n_t-Y_t|>\eps)\to 0$$ for all $N\in\N$ and all $\eps>0.$ Below, let  $(T_m)_{m\geq 1}$ and $(U_m)_{m\geq 1}$ be defined as before Theorem~\ref{maintheorem} and the constant  $m_p$ be defined  as in  Theorem~\ref{maintheorem}(ii).

The following extension of Theorem~\ref{maintheorem}, to include a  non-trivial $\sigma$ process and functional convergence,  is the main result of this paper.

\begin{theo}\label{theorem-sigma}
 Let $X=(X_t)_{t\geq 0}$ be a stochastic process defined by \eqref{lss}. Assume that the Blumenthal--Getoor index satisfies $\beta<2$.
 \begin{itemize}
 \item [(i)]   Suppose that  (B1) holds and  that 
$\alpha <k-1/p$, $p>\beta$ and $p \geq 1$.    
Then, as $n\to\infty,$ the functional $\f$-stable convergence holds 
\begin{equation} \label{mainpart1}
n^{\alpha p}V(p;k)^n_t \skorcon |c_0|^p\!\!\!\!\! \!\sum_{m:\, T_m\in [0,t]} |\Delta L_{T_m}
\sigma_{T_m-}|^p V_m\quad\text{where}\quad 
V_m=\sum_{l=0}^{\infty} |h_k(l+U_m)|^p.
\end{equation} 

\item [(ii)]  Suppose  that $L$ is a symmetric $\beta$-stable L\'evy process with $\beta \in (0,2)$
and scale parameter $\gamma>0$. Suppose that (B2) holds and that $\alpha <k-1/\beta$ and $p<\beta$.  
Then  as $n\to \infty$
\begin{align} \label{mainpart2}
n^{-1+p(\alpha + 1/\beta)}V(p;k)^n_t \ucp m_p \int_0^t |\sigma_s|^p ds.
\end{align}
\item [(iii)]   Suppose that (B1) holds,  $\theta>1$,  $\alpha>k-1/( \beta\vee p)$ and $p\geq 1$. Then 
as $n\to \infty$ 
\begin{equation} \label{mainpart3}
  n^{-1+pk}V(p;k)^n_t \ucp \int_0^t |F_u|^p\, du ,
 \end{equation}
where $(F_u)_{u\in \R}$ is a measurable process satisfying
\begin{equation}
F_u=  \int_{-\infty}^u g^{(k)}(u-s) \sigma_{s-} \,dL_s\quad \text{a.s.\ for all }u\in \R \quad \text{and}\quad \int_0^t |F_u|^{p}\,du<\infty\quad \text{a.s.}
\end{equation} 
 \end{itemize}
\end{theo}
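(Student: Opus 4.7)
The plan is to reduce each of the three cases of Theorem~\ref{theorem-sigma} to the corresponding part of Theorem~\ref{maintheorem} (the deterministic-$\sigma$ case proved in \cite{BLP}) by a volatility-freezing argument, and then to upgrade finite-dimensional convergence to the claimed functional mode.

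First, a standard stopping-time localization based on the c\`adl\`ag property of $\sigma$ allows me to assume that $\sigma$ is uniformly bounded and that the integrands in (B1) (resp.\ (B2)) are controlled by a deterministic constant; since the kernel $g$ is supported on $[0,\infty)$ and decays at infinity, and since all conclusions involve only $\sigma$ restricted to a bounded horizon, this localization can be removed at the end. The core step is then the pointwise decomposition, for each $i$,
\[
\Delta_{i,k}^{n} X \;=\; \sigma_{(i-k)/n-}\, \Delta_{i,k}^{n} X^{(0)} \;+\; R_i^{n},
\]
where $X^{(0)}$ is the process in \eqref{lss} with $\sigma\equiv 1$ and $R_i^n$ collects the local variation of $\sigma$ on the increment window together with the ``distant past'' part involving $H_s$. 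The main term, once inserted into $V(p;k)^n_t$ and analysed locally, reproduces the right-hand side of \eqref{mainpart1}--\eqref{mainpart3} by Theorem~\ref{maintheorem}: in case (i), the leading block contribution concentrates on a single jump time $T_m$ of $L$, and the freezing yields the factor $|\sigma_{T_m-}|^p$ (consistent with the $\sigma_{s-}$ appearing in the integrand of \eqref{lss}); in case (ii), after a second, mesoscopic blocking on intervals of length $\tau\downarrow 0$ on which $\sigma$ is nearly constant, Theorem~\ref{maintheorem}(ii) applied block-wise produces a Riemann sum for $m_p\int_0^t|\sigma_s|^p\,ds$; in case (iii), the Taylor-type identity $\Delta_{i,k}^{n} X^{(0)}\approx n^{-k}F^{(0)}_{i/n}$ lifts under freezing to $\Delta_{i,k}^{n} X\approx n^{-k}F_{i/n}$, giving a Riemann sum for $\int_0^t|F_u|^p\,du$.

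Controlling the remainder $\sum_i|R_i^n|^p$ at the right scale is the heart of the proof, and this is where I expect the main obstacle to lie. Because $X$ is generally not a semimartingale and because $p$ may be less than $1$ (in case (ii)) or close to the Blumenthal--Getoor index $\beta$, classical martingale inequalities are insufficient; instead I would apply the decoupling and isometry estimates for L\'evy stochastic integrals from Kwapie\'n--Woyczy\'nski~\cite{KW}, as already used extensively in \cite{BLP}. These yield bounds on $\E[|R_i^n|^p]$ in terms of deterministic integrals of $g-g_0$ and $g^{(k)}$ against $|\sigma_s-\sigma_{(i-k)/n-}|^{\rho}\vee|\sigma_s-\sigma_{(i-k)/n-}|^{\beta'}$, whose summability in $i$ is precisely what assumption (B1) guarantees---which also explains the mixed $\rho/\beta'$ form of that condition. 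The functional upgrade is comparatively mild: in case (i) both $V(p;k)^n_\cdot$ and its proposed limit are c\`adl\`ag nondecreasing, so $M_1$-stable convergence follows from finite-dimensional stable convergence together with the automatic tightness of monotone processes; in cases (ii) and (iii) the limit is continuous in $t$, so $M_1$-convergence strengthens automatically to ucp.
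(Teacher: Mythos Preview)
Your freezing decomposition $\Delta_{i,k}^n X=\sigma_{(i-k)/n-}\,\Delta_{i,k}^n X^{(0)}+R_i^n$ is the right idea for case~(ii) and coincides with the paper's argument there, but it genuinely fails in case~(i). The paper makes this explicit (Remark~\ref{Bernbloc}): take $L$ compound Poisson with L\'evy measure $\delta_{\{1\}}+\delta_{\{-1\}}$ and $\sigma=L$. At a jump time $T_m$, for each index $i$ with $(i-k)/n\geq T_m$ the remainder contains the term $g_{i,n}(T_m)(\sigma_{T_m-}-\sigma_{(i-k)/n-})\Delta L_{T_m}=-g_{i,n}(T_m)(\Delta L_{T_m})^2=-g_{i,n}(T_m)$, so $n^{\alpha p}\sum_i|R_i^n|^p$ is of the same order as the main term and does not vanish. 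The obstruction is structural: the limit in \eqref{mainpart1} carries the \emph{left} limit $\sigma_{T_m-}$, whereas freezing at $(i-k)/n$ for the bulk of the contributing indices $i\geq i_m+k$ sees $\sigma_{T_m}$; when $\sigma$ and $L$ share jump times this difference is $O(1)$ and no Kwapie\'n--Woyczy\'nski estimate can make it small. The paper therefore avoids freezing in~(i) altogether: it first proves the result when $L$ is compound Poisson with jumps bounded away from zero (the integral in \eqref{lss} is then a pathwise sum and the factor $\sigma_{T_m-}$ appears directly), and afterwards removes the small jumps of a general $L$ via the estimates of \cite{KW} under~(B1).

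Your plan for case~(iii) also does not recover the stated limit. From the decomposition you wrote, the main term $\sigma_{(i-k)/n-}\Delta_{i,k}^n X^{(0)}$ yields, after blocking, $\int_0^t|\sigma_u|^p|F^{(0)}_u|^p\,du$ with $F^{(0)}_u=\int_{-\infty}^u g^{(k)}(u-s)\,dL_s$, which in general differs from $\int_0^t|F_u|^p\,du$ because $F_u$ depends on the entire past of $\sigma$, not just on $\sigma_{u-}$. The ``lifting'' $\Delta_{i,k}^n X\approx n^{-k}F_{i/n}$ you invoke is precisely what has to be established and is not a consequence of freezing. The paper instead proves a stochastic Fubini theorem for predictable integrands (Lemma~\ref{Fub}) and uses it to show that $X$ admits a $k$-times differentiable modification with $k$-th derivative $F$; then \eqref{mainpart3} follows from a deterministic Riemann-sum argument. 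Your remarks on the functional upgrade (monotonicity giving $M_1$-tightness in~(i), continuity of the limit giving ucp in~(ii)--(iii)) are correct and match the paper.
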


\begin{rem}
Under the integrability assumption (B1),  Theorem~\ref{theorem-sigma} covers all possible choices of $\alpha>0,\beta\in [0,2)$ and  $p\geq 1$
except the critical cases where $p= \beta$,   $\alpha= k-1/p$ or $\alpha= k-1/\beta$. The two critical cases
$\alpha= k-1/p$, $p>\beta$ and $\alpha= k-1/\beta$, $p<\beta$ have been discussed in \cite{BP15} in the case $\sigma \equiv 1$.  
\end{rem}

\begin{rem}
In his original work \cite{Skor}, Skorokhod introduced 4 different topologies on $\mathbb D(\R_+;\R)$, commonly referred to as  $J_1,J_2, M_1$ and $M_2$ topology, $J_1$ being by far the most popular one.
It can be shown that the functional stable convergence in Theorem \ref{theorem-sigma}(i) does not hold with respect to the $J_1$-topology. Neither does it hold with respect to $J_2$, whereas $M_2$-convergence is a direct consequence of $M_1$-convergence, since $M_2$ is weaker than $M_1.$ 
\end{rem}

This paper is structured as follows. Section \ref{sec3} is devoted to various statistical applications of our limit theory.
  In Section \ref{sec2} we discuss properties of L\'evy integrals of predictable processes and recall essential estimates from \cite{KW} for those integrals.
 All proofs are 
demonstrated in Section~\ref{sec4}.

\section{Some statistical applications} \label{sec3}
\setcounter{equation}{0}
\renewcommand{\theequation}{\thesection.\arabic{equation}}

We start this section by giving an interpretation to the parameters $\alpha>0$ and $\beta \in (0,2)$. Let us consider the linear fractional stable motion defined by
\begin{align} \label{flm1}
Y_t := c_0 \int_{\R} \{(t-s)_{+}^\alpha  - ( -s)_{+}^\alpha \}\, dL_s,
\end{align} 
where the constant $c_0$ has been introduced in assumption (A). It is well known that the process 
$(Y_t)_{t \geq 0}$ is well defined whenever $H=\alpha+1/\beta \in (1/2,1)$. Furthermore, the process
$(Y_t)_{t \geq 0}$ has stationary symmetric $\beta$-stable increments, H\"older continuous paths of all orders smaller than $\alpha$ and self-similarity index $H$, i.e.
\[
\left( Y_{at} \right) _{t \geq 0} \eqschw \left(a^{H}Y_{t} \right) _{t \geq 0}
\qquad \text{for any } a\in \R_+.
\]   
We refer to e.g. \cite{BCI} for more details. As it has been discussed in \cite{BLP,BP15} in the setting
$\sigma=1$, the small scale behaviour of the process $X$ is well approximated by the corresponding
behaviour of the linear fractional stable motion $Y$. In other words, when the intermittency process
$\sigma$ is smooth, we have that 
\[
X_{t+\Delta} - X_t \approx \sigma_t(Y_{t+\Delta} - Y_t)
\]   
for small $\Delta>0$. Thus, intuitively speaking, the properties of $Y$ (H\"older smoothness, self-similarity) transfer to the process $X$ on small scales. 

Having understood the role of the parameters $\alpha>0$ and $H=\alpha+1/\beta \in (1/2,1)$ from 
the modelling perspective, it is obviously important to investigate estimation methods for these parameters.  We note that the conditions $\alpha>0$ and $H \in (1/2,1)$  imply the
restrictions  $\beta \in (1,2)$ and $\alpha < 1-1/\max\{p,\beta\}$. Hence, the regime of  Theorem~\ref{theorem-sigma} (iii) is never applicable.

We start with a direct estimation procedure, which identifies the convergence 
rates in Theorem \ref{theorem-sigma} (i)-(ii).  We apply these convergence results only for 
$t=1$ and $k=1$.
For $p\in [\underline{p}, \overline{p}]$ with $\underline{p} \in (0,1)$ and
$\overline{p}>2$, we introduce the statistic 
\begin{align} \label{statistic}
S_{\alpha, \beta} (n,p):= -\frac{\log V(p)^n}{\log n} \qquad \text{with} \qquad V(p)^n= V(p;1)^n_1.
\end{align} 
When the underlying L\'evy motion $L$ is symmetric $\beta$-stable and the assumptions of 
Theorems~\ref{theorem-sigma} (i)-(ii) are satisfied, we obtain that
\begin{align} \label{logscale}
S_{\alpha, \beta} (n,p) \toop  S_{\alpha, \beta} (p)  :=
\left \{
\begin{array} {ll}
\alpha p: & \alpha < 1-1/p \text{ and } p>\beta \\
pH-1: &\alpha < 1-1/\beta \text{ and } p<\beta 
\end{array} 
\right. 
\end{align}
Indeed, the result of Theorem~\ref{theorem-sigma} (i) shows that 
\[
\frac{\alpha p \log n + \log V(p)^n}{\log n} \stab 0 \qquad \Rightarrow \qquad
\frac{\alpha p \log n + \log V(p)^n}{\log n} \toop 0.
\]
This explains the first line in  \eqref{logscale}. Similarly, Theorem~\ref{theorem-sigma} (ii) 
implies the second convergence result of \eqref{logscale}. At this stage we remark that the limit 
$S_{\alpha, \beta} : [\underline{p}, \overline{p}] \setminus\{\beta\} \to \R$ 
is a piecewise linear function with two different slopes. It can be continuously extended to the function 
$S_{\alpha, \beta} : [\underline{p}, \overline{p}]  \to \R$, whose definition can be further extended 
to include all values 
\[
(\alpha, \beta) \in J:=\left \{(\alpha, \beta)\in \R^2:~\beta \in [1,2], ~\alpha \in [0,1-1/\beta] \right\}.
\]   
Let $(\alpha_0, \beta_0) \in J^{\circ}$, where $J^{\circ}$ is the set of all inner points of $J$, denote the true parameter of the model \eqref{lss}. Now, it is natural to consider the $L^2$-distance between the observed scale function $S_{\alpha_0, \beta_0} (n,p)$ and the theoretical limit $S_{\alpha, \beta} (p)$:
\begin{align} \label{scaleest}
( \hat{\alpha}_n, \hat{\beta}_n) \in \text{argmin}_{(\alpha,\beta) \in J} 
\|S_{\alpha_0, \beta_0} (n) -   S_{\alpha, \beta} \|_{L^2([\underline{p}, \overline{p}])} 
\end{align} 
with $S_{\alpha_0, \beta_0} (n):= S_{\alpha_0, \beta_0} (n, \cdot)$. This approach is somewhat similar to the estimation method proposed in \cite{glt15}. We notice that, for a finite $n$,
the minimum of the $L^2([\underline{p}, \overline{p}])$-distance at  \eqref{scaleest} is not necessarily 
obtained at a unique point, and we take an arbitrary measurable minimiser 
$( \hat{\alpha}_n, \hat{\beta}_n)$. Our next result 
shows consistency of the estimator $( \hat{\alpha}_n, \hat{\beta}_n)$.

\begin{cor} \label{cor1}
Let $(\alpha_0, \beta_0) \in J^{\circ}$ and let $L$ be a symmetric $\beta$-stable 
L\'evy motion. Assume that the conditions of Theorem~\ref{theorem-sigma} (i) (resp. Theorem~\ref{theorem-sigma} (ii)) hold when $\alpha\in (0,1-1/p)$ and $p>\beta$ (resp. 
$\alpha\in (0,1-1/\beta)$ and $p<\beta$).  Then we obtain convergence in probability
\[
( \hat{\alpha}_n, \hat{\beta}_n) \toop (\alpha_0, \beta_0). 
\] 
\end{cor}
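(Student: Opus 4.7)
The plan is the standard M-estimator consistency argument on the compact parameter space $J$: I will establish (a) uniform convergence in probability of the contrast $\Psi_n(\alpha,\beta):=\|S_{\alpha_0,\beta_0}(n)-S_{\alpha,\beta}\|_{L^2([\underline{p},\overline{p}])}^2$ to $\Psi(\alpha,\beta):=\|S_{\alpha_0,\beta_0}-S_{\alpha,\beta}\|_{L^2([\underline{p},\overline{p}])}^2$ over $(\alpha,\beta)\in J$, and (b) the identifiability/well-separation property that $\Psi(\alpha,\beta)=0$ if and only if $(\alpha,\beta)=(\alpha_0,\beta_0)$. Combining (a) and (b) with compactness of $J$ delivers the convergence in probability of any measurable minimiser $(\hat\alpha_n,\hat\beta_n)$ to $(\alpha_0,\beta_0)$ via the usual argmin argument.

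For (a), the main issue is to upgrade the pointwise-in-$p$ convergence in \eqref{logscale} to convergence in $L^2([\underline{p},\overline{p}])$, as there is no obvious moment bound that would permit a direct dominated convergence argument. I would exploit a concavity observation: for every $n\geq 2$ and every realisation, the map
\begin{equation*}
p\mapsto S_{\alpha_0,\beta_0}(n,p)=-\frac{1}{\log n}\log\Big(\sum_{i=1}^{n}|\Delta_{i,1}^{n}X|^p\Big)
\end{equation*}
is concave on $(0,\infty)$, since $p\mapsto\log\sum_i e^{p\log|\Delta_{i,1}^{n}X|}$ is a log-sum-exp of affine functions and hence convex. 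The limit $p\mapsto S_{\alpha_0,\beta_0}(p)$ is piecewise linear with slope $\alpha_0+1/\beta_0$ to the left and slope $\alpha_0$ to the right of the kink at $p=\beta_0$, hence concave and continuous on $[\underline{p},\overline{p}]$. Since \eqref{logscale} yields pointwise convergence in probability on $[\underline{p},\overline{p}]\setminus\{\beta_0\}$, extracting from any subsequence a sub-subsequence along which the convergence holds almost surely on a countable dense subset, and invoking the classical fact that pointwise convergence of concave functions to a continuous concave limit on an open interval is automatically uniform on compact subintervals, one obtains $\sup_{p\in[\underline{p},\overline{p}]}|S_{\alpha_0,\beta_0}(n,p)-S_{\alpha_0,\beta_0}(p)|\toop 0$. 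The joint continuity of $(\alpha,\beta,p)\mapsto S_{\alpha,\beta}(p)$ on the compact set $J\times[\underline{p},\overline{p}]$ then upgrades this to $\sup_{(\alpha,\beta)\in J}|\Psi_n(\alpha,\beta)-\Psi(\alpha,\beta)|\toop 0$.

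For (b), on $[\underline{p},\overline{p}]$ the function $S_{\alpha,\beta}$ is piecewise linear with a single kink at $p=\beta\in[1,2]\subset(\underline{p},\overline{p})$, so the kink location identifies $\beta$ and the right-hand slope then identifies $\alpha$. Hence the map $(\alpha,\beta)\mapsto S_{\alpha,\beta}$ from $J$ into $L^2([\underline{p},\overline{p}])$ is injective, and its continuity combined with compactness of $J$ gives $\inf_{(\alpha,\beta)\notin U}\Psi(\alpha,\beta)>0$ for every open neighbourhood $U$ of $(\alpha_0,\beta_0)$, which is the desired well-separation.

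The main obstacle is step (a): the passage from pointwise-in-$p$ to $L^2$ convergence of the scale statistic, without moment estimates that would control the statistic uniformly in $p$. The concavity trick sidesteps this issue cleanly, after which the remainder is routine identifiability and argmin consistency machinery.
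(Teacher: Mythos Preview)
Your proof is correct and follows the same overall architecture as the paper's: first establish that $S_{\alpha_0,\beta_0}(n,\cdot)\to S_{\alpha_0,\beta_0}$ in $L^2([\underline p,\overline p])$ in probability, then use injectivity of $(\alpha,\beta)\mapsto S_{\alpha,\beta}$ on the compact set $J$ together with a standard argmin/triangle-inequality step. The genuine difference lies in how you upgrade the pointwise-in-$p$ convergence \eqref{logscale} to convergence in $L^2([\underline p,\overline p])$. The paper exploits the elementary $\ell^p$-monotonicity inequality $(V(\overline p)^n)^{1/\overline p}\le (V(p)^n)^{1/p}\le (V(\underline p)^n)^{1/\underline p}$ to produce a domination of $|S_{\alpha_0,\beta_0}(n,p)|$ by a quantity involving only the two endpoints $\underline p,\overline p$, and then applies dominated convergence. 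You instead observe that $p\mapsto S_{\alpha_0,\beta_0}(n,p)$ is concave for every $n\ge 2$ (log-sum-exp convexity), as is the piecewise-linear limit, and invoke the classical fact that pointwise convergence of concave functions to a continuous concave limit is automatically locally uniform. Your route yields a slightly stronger intermediate conclusion (uniform rather than merely $L^2$ convergence in $p$) and avoids any domination argument, at the cost of appealing to a convex-analysis lemma; the paper's route is perhaps more self-contained but gives only what is needed. Both the identifiability step and the argmin step are essentially the same in the two proofs, since your well-separation condition is equivalent to the paper's statement that $r\mapsto S_r$ is a homeomorphism from the compact $J$ onto its image.
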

\begin{proof}
Set $r_0=(\alpha_0, \beta_0)$ and $\widehat{r}_n=( \hat{\alpha}_n, \hat{\beta}_n)$. We first show
the convergence
\begin{align} \label{pnconvergence}
\|S_{r_0} (n) -   S_{r_0} \|_{L^2([\underline{p}, \overline{p}])} \toop 0.
\end{align}  
From  \eqref{logscale} we deduce that $S_{r_0} (n,p) \toop S_{r_0} (p)$ for all 
$p \in [\underline{p}, \overline{p}] \setminus\{\beta\}$. Furthermore, for any $p \in [\underline{p}, \overline{p}]$, it holds that 
\[
\left(V(\overline{p})^n \right)^{1/\overline{p}} \leq \left(V(p)^n \right)^{1/p} \leq
\left(V(\underline{p})^n \right)^{1/\underline{p}}.
\]
Hence, we deduce the inequality 
\[
\left| \frac{\log V(p)^n}{\log n} \right| \leq \max \left\{\frac{p}{\overline{p}} \cdot \left|
 \frac{\log V(\overline{p})^n}{\log n} \right|,  \frac{p}{\underline{p}} \cdot \left|
 \frac{\log V(\underline{p})^n}{\log n} \right|  \right\}.
\] 
Since $|\log V(\underline{p})^n/\log n| \toop pH-1$ and $|\log V(\overline{p})^n/\log n| \toop \alpha
p$, because $\underline{p}<1<\beta$ and $\overline{p}>2>\beta$, we readily deduce the convergence at \eqref{pnconvergence} by dominated convergence theorem.

Now, we note that the mapping $G: J \to G(J)
\subset L^2([\underline{p}, \overline{p}])$, $r \mapsto S_r$, is a homeomorphism. Thus, it suffices to 
prove that $\|S_{\widehat{r}_n} -   S_{r_0} \|_{L^2([\underline{p}, \overline{p}])} \toop 0$ to conclude 
$\widehat{r}_n \toop r_0$. To show the latter we observe that 
\begin{align}
\|S_{\widehat{r}_n} -   S_{r_0} \|_{L^2([\underline{p}, \overline{p}])} & \leq
\|S_{r_0}(n) -   S_{r_0} \|_{L^2([\underline{p}, \overline{p}])} 
+ \|S_{r_0}(n) -   S_{\widehat{r}_n} \|_{L^2([\underline{p}, \overline{p}])} \\
&= \|S_{r_0}(n) -   S_{r_0} \|_{L^2([\underline{p}, \overline{p}])} 
+\min_{r \in J} \|S_{r_0}(n) -   S_{r} \|_{L^2([\underline{p}, \overline{p}])} \\
& \leq 2 \|S_{r_0}(n) -   S_{r_0} \|_{L^2([\underline{p}, \overline{p}])} \toop 0.
\end{align} 
This completes the proof of Corollary \ref{cor1}.
\end{proof}

In practice  the integral in \eqref{scaleest} needs to be discretised.  
We further remark 
that the estimator  $S_{\alpha, \beta} (n,p)$ has the rate of convergence $\log n$ due to the bias 
$V(p)/\log n$, where $V(p)$ denotes the limit of $V(p)^n$.    

As for the estimation of the self-similarity parameter  $H=\alpha+1/\beta \in (1/2,1)$,
there is an alternative estimator based on a ratio statistic. Recalling that  $\beta \in (1,2)$, 
we deduce for any $p \in (0,1]$
\begin{align} \label{ratiostatistic}
R(n,p) := \frac{\sum_{i=2}^n |X_{\frac{i}{n}} - X_{\frac{i-2}{n}}|^p}{\sum_{i=1}^n |X_{\frac{i}{n}} - X_{\frac{i-1}{n}}|^p} \toop 2^{pH}  
\end{align}
by a direct application of Theorem \ref{theorem-sigma} (ii). Thus, we immediately conclude that 
\[
\hat{H}_n := \frac{\log R(n,p)}{ p \log 2} \toop H. 
\]
This type of idea is rather standard in the framework of a fractional Brownian motion with Hurst 
parameter $H$. It has been also applied for Brownian semi-stationary processes in \cite{BCP11,BCP13}. Theorem 1.2 (i) in \cite{BLP}, which has been shown in the setting $\sigma=1$,
suggests that the statistic  $\hat{H}_n$ has convergence rate $n^{1 - 1/(1-\alpha)\beta}$
whenever $p \in (0,1/2]$. Furthermore, the rate of convergence can be improved to $\sqrt{n}$ 
via using $k$th order increments with $k\geq 2$ (cf. \cite[Theorem 1.2 (ii)]{BLP}).  
However, 
we dispense with the precise proof of these statements for 
non-constant intermittency process $\sigma$.

\begin{rem}  \rm 
We remark that the linear fractional stable motion $(Y_t)_{t \geq 0}$ is well defined for 
$H=\alpha+1/\beta \in (0,1)$ and $\alpha \in (-1/\beta, 1-1/\beta)$. In this case the process $Y$ has 
unbounded paths whenever $\alpha<0$. Since in this framework there is no a priori lower bound 
on the parameter $\beta$, it is hard to apply Theorem \ref{theorem-sigma} (ii) to estimate the parameter $H$, because it requires the condition $p<\beta$. An elegant solution of this problem has been found 
in a recent work \cite{DI} in the context of  a linear fractional stable motion. It turns out that in this 
setting the asymptotic result of Theorem \ref{theorem-sigma} (ii) remains valid for powers $p \in (-1,0)$. Hence, 
it holds that 
\[
\hat{H}_n  \toop H \qquad \text{for } p \in (-1,0) 
\]
when the underlying process is a linear fractional stable motion. However, proving this result for 
a general L\'evy semi-stationary process is a much more delicate issue.  
\end{rem}  

Another important object for applications in turbulence modelling is the intermittency process $\sigma$.
First of all, we remark that the process  $\sigma$ in the general model \eqref{lss} is statistically not 
identifiable. This is easily seen, because multiplication of $\sigma$ by a constant can not be distinguished from the multiplication of, say, L\'evy process $L$ by the same constant. However, 
it is very well possible to estimate the \textit{relative intermittency}, which is defined as
\begin{align}
RI(p):= \frac{\int_0^t |\sigma_s|^p ds }{\int_0^1 |\sigma_s|^p ds }, \qquad t \in (0,1), 
\end{align}     
for $p \in (0,1]$. The relative intermittency, which has been introduced in \cite{BPS} for
$p=2$ in the context of Brownian semi-stationary processes, describes the relative amplitude of the velocity process on an interval $[0,1]$. Applying
the convergence result of Theorem \ref{theorem-sigma} (ii) for $p \in (0,1]$, the relative intermittency can be consistently estimated via 
\begin{align}
RI(n,p):= \frac{V(p)^n_t}{V(p)^n_1}  \toop RI(p). 
\end{align}   
Again we suspect that the associated convergence rate is $n^{1 - 1/(1-\alpha)\beta}$ whenever
$p \in (0,1/2]$ as suggested
by \cite[Theorem 1.2 (i)]{BLP}.

\section{Preliminaries: Estimates on L\'evy integrals} \label{sec2}
\setcounter{equation}{0}
\renewcommand{\theequation}{\thesection.\arabic{equation}}

To prove the various limit theorems we need very sharp  estimates of the $p$th moments of the increments of process $X$ defined in \eqref{lss}. In fact, we need such estimates for several different  processes related to $X$ obtained by different   truncations. Below we explain some intuition behind the techniques we 
use to estimate the $p$th moments of $X$.  Recall that if $B$ is a Brownian motion and $F$ is predictable process then we 
have the following estimate: For any $q>0$ there exists a finite constant  $C$,  only depending on $p$, such that 
\begin{equation}\label{est-Brow}
 \E\Big[ \Big| \int_0^t F_s\,dB_s\Big|^q\Big]\leq C \E\Big[ \Big(\int_0^t F_s^2\,ds\Big)^{q/2}\Big] = C \E\big[\| F \|_{L^2([0,t])}^q\big],
\end{equation}
which follows by the 
Burkholder-Davis-Gundy inequality, see~\cite[Theorem~3.28]{K-S}.  The estimate \eqref{est-Brow} is crucial for proving  limit theorems when the driving process is a Brownian motion, see e.g.\ \cite{BCP11}. But the situation becomes more complicated when the Brownian motion $B$ is replaced by a L\'evy process $L$ as considered in the present paper.   For integrals with respect to general L\'evy processes $L$ we cannot estimate the sample paths $s\mapsto F_s(\omega)$, $\omega\in \Omega$, in the $L^2([0,t])$-norm.  We need to consider other functionals,  which depend on the L\'evy measure $\nu$ of $L$. 
When $F:\R_+\to \R$ is a deterministic function and $L$ is a L\'evy process, such estimates go back to Rajput and Rosi\'nski~\cite[Theorem~3.3]{RajRos}. Their results imply the existence of a constant $C>0$ such that   
$$\E[ | \int_0^t F_s\,dL_s|^q]\leq C \| F \|_{L,q}^q, $$ 
where $\| \cdot \|_{L,q}$ is a certain functional to be defined below (when $L$ is symmetric and without Gaussian component). The decoupling approach used in Kwapi\'en and 
Woyczy\'nski~\cite{KW} provides an extension of the results  to 
general predictable $F$, see  Lemmas~\ref{KwaWoyIso} and \ref{WeaLBetIso} below. These results can be thought of as extensions of \eqref{est-Brow} to integrals with respect to L\'evy processes. Before stating the results precisely, we need the following notation.

Let $L=(L_t)_{t\in \R}$ be a symmetric L\'evy process on the real line with $L_0=0$, L\'evy measure $\nu$ and without a Gaussian component. 
For a predictable process $(F_t)_{t\in\R}$ and for $q=0$ or $q\geq 1$ we define 
\[ \Phi_{q,L}(F) :=\int_{\R^2} \phi_{q}(F_su) \diff s\, \nu (d u),\]
where 
\[\phi_q(x):=|x|^q\mathds 1_{\{|x|>1\}}+x^2\mathds 1_{\{|x|\leq 1\}}.\]
A predictable process $F=(F_t)_{t\in \R}$ is integrable with respect to $(L_t)_{t\in\R}$ in the sense of \cite{KW} if and only if $\Phi_{0,L}(F)<\infty$ almost surely (cf.\ \cite[Theorem 9.1.1]{KW}). The linear space of predictable processes satisfying $\Phi_{q,L}(F)<\infty$ will be denoted by \ldl{q}{L}. In order 
to estimate the $p$-moments of stochastic integrals we introduce for all  $q\geq 1$ 
\begin{equation}
 \label{ljsdfljsdf}\|F\|_{q,L}:=\inf\{\lambda\geq 0\,:\, \Phi_{q,L}(F/\lambda)\leq 1\},\qquad F\in \ldl{q}{L}.
\end{equation}
The following two results from Chapter 9.5 in \cite{KW} will play a key role for our proofs.

\begin{lem}[\cite{KW}, Equation~(9.5.3)]\label{KwaWoyIso}
For all $q\geq 1$ there is a constant $C,$ depending only on $q$, such that we obtain for all $F\in\ldl{q}{L}$
\begin{equation}\label{ewrwera}
 \E\bigg[\bigg|\int_\R F_s \diff L_s\bigg|^q\bigg]\leq C\E\big[\|F\|^q_{q,L}\big].
\end{equation}
\end{lem}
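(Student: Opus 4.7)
The plan is to prove this via a decoupling argument in the spirit of Kwapień--Woyczyński. The strategy naturally splits into two parts: first establish the bound when the integrand is deterministic (so the integral is infinitely divisible with an explicit characteristic exponent), then bootstrap to general predictable $F$ via a tangent/decoupling trick.

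First I would handle the deterministic case. If $f\!:\R\to\R$ is a deterministic integrand in $\mathbf L^q(dL)$, the random variable $\int_\R f_s\,dL_s$ is infinitely divisible without Gaussian component, and by symmetry of $L$ its log--characteristic function is $\int_{\R^2} (\cos(\xi f_s u)-1)\,ds\,\nu(du)$. Using the standard elementary inequalities relating $|\cos x - 1|$ to $\phi_2(x)$ on $\{|x|\le 1\}$ and to $|x|^q$ on $\{|x|>1\}$, together with the Luxemburg--type definition \eqref{ljsdfljsdf}, one obtains a tail bound of the form $\P(|\int f\,dL|>\lambda\|f\|_{q,L})\lesssim \lambda^{-q}$ and hence
\begin{equation}
\E\Big[\Big|\int_\R f_s\,dL_s\Big|^q\Big]\le C_q\,\|f\|_{q,L}^q,
\end{equation}
where $C_q$ depends only on $q$. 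This is essentially the Rajput--Rosiński moment estimate adapted to the symmetric, no-Gaussian-component setting.

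Next I would move to predictable $F$ by decoupling. Enlarge the probability space to carry an independent copy $\widetilde L$ of $L$ which is independent of the filtration to which $F$ is adapted, and consider the decoupled integral $\int_\R F_s\,d\widetilde L_s$. The core tangent-process/decoupling inequality (valid because $L$ is symmetric, so increments of $L$ and $\widetilde L$ form tangent sequences with respect to the enlarged filtration) yields, for some constant depending only on $q$,
\begin{equation}
\E\Big[\Big|\int_\R F_s\,dL_s\Big|^q\Big]\le C_q'\,\E\Big[\Big|\int_\R F_s\,d\widetilde L_s\Big|^q\Big].
\end{equation}
Conditioning on $F$ in the right-hand side reduces the problem to the deterministic case applied pathwise, so by Step~1 the conditional $q$th moment is at most $C_q\|F\|_{q,L}^q$. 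Taking the outer expectation gives \eqref{ewrwera}.

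The main obstacle is the decoupling inequality itself: the equivalence of $L^q$-norms for tangent martingale sequences. This is a genuinely nontrivial result whose full justification requires the martingale tangent-process theory developed in \cite{KW} (via good $\lambda$ inequalities for tangent sequences, symmetrisation, and approximation of $F$ by simple predictable processes). The deterministic step and the reduction via conditioning are comparatively routine, but both depend on this harder tangent principle, which is why the pathway through decoupling is the natural one rather than trying to imitate a Burkholder--Davis--Gundy--type argument pathwise.
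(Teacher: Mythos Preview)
The paper does not prove this lemma at all: it is quoted verbatim as \cite[Equation~(9.5.3)]{KW}, and the only additional comment is that the book states it for integrals over a finite interval while the extension to $\int_\R$ is routine via the $\sigma$-finite stochastic measure framework of \cite[Chapters~8--9]{KW}. Your proposal, by contrast, sketches the underlying proof of the cited result itself (deterministic case via the Rajput--Rosi\'nski moment estimate, then bootstrap to predictable $F$ by the tangent/decoupling inequality and conditioning). That is indeed the architecture of the argument in \cite{KW}, so as a reconstruction of the background theory your outline is on target; but it is not what the present paper does, and for the purposes of this paper a citation is all that is expected.

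One genuine technical slip in your Step~1: a tail bound of the exact order $\P(|\int f\,dL|>\lambda\|f\|_{q,L})\lesssim \lambda^{-q}$ does \emph{not} imply $\E[|\int f\,dL|^q]<\infty$, since $\int_1^\infty q\lambda^{q-1}\cdot\lambda^{-q}\,d\lambda$ diverges. The correct deterministic estimate proceeds differently: from $\Phi_{q,L}(f/\|f\|_{q,L})\le 1$ one controls separately the small-jump (second-moment) contribution and the large-jump ($q$th-moment) contribution of the L\'evy--Khintchine representation, and combines them to bound $\E[|\int f\,dL|^q]$ directly rather than via a borderline tail. This is how \cite{RajRos} and \cite{KW} actually argue, and it is a nontrivial refinement of what you wrote.
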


The above lemma follows by \cite[Equation~(9.5.3)]{KW} and the comments following it.
 Actually, \cite[Equation~(9.5.3)]{KW} only treats the case where the stochastic  integral in \eqref{ewrwera} is over a finite time interval, say
 $\int_0^t F_s\,dL_s$. However, the definition of the stochastic integral and the estimates of the  integral in \cite[Chapters~8--9]{KW} extend to the case  of $\int_\R F_s\,dL_s$ in a natural way. For example,  the set function $m((s,t]) = L_t- L_s$ for $s<t$, extends only to a  $\sigma$-finite stochastic measure  defined on the $\delta$-ring of bounded Borel subsets of $\R$ (cf. \cite[Theorem~8.3.1]{KW}) and so forth.  A similar extension applies to the results from \cite{KW} mentioned below.

For the next result, which is an immediate consequence of \cite[Theorem 9.5.3]{KW}, we use the notation $\|Z\|^\beta_{\beta,\infty}=\sup_{\lambda>0}\lambda^\beta \P[|Z|>\lambda]$ for an arbitrary  random variable $Z$. For $q<\beta$ it holds that 
$$\E[|Z|^q]^{1/q}\leq \|Z\|_{\beta,\infty}\leq\E[|Z|^\beta]^{1/\beta}.$$ 
In the literature, $\|\cdot\|_{\beta,\infty}$ is often referred to as the weak $L^\beta$-norm. However, $\|\cdot\|_{\beta,\infty}$ generally fails to satisfy the triangle  inequality.

\begin{lem}[\cite{KW}, Theorem 9.5.3] \label{WeaLBetIso}
Let $(L_t)_{t\in\R}$ be a symmetric $\beta$-stable L\'evy process. Then there is a positive constant $C>0$ such that for all $(F_t)_{t\in\R}$ in \ldl{0}{L} it holds that
\begin{align}\label{WeaLBetIne}
\bigg\|\int_\R F_s \diff L_s\bigg\|^\beta_{\beta,\infty}\leq C\E\bigg[\int_\R|F_s|^\beta \diff s\bigg].
\end{align}
\end{lem}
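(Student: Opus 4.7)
My plan is to reduce the weak-type estimate for predictable integrands to the deterministic case via a decoupling argument, following the general strategy of \cite{KW}.

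\textbf{Step 1: Deterministic integrands.} For a deterministic $f\in L^\beta(\R,ds)$, the Rajput--Rosi\'nski formula gives
\[
\E\Bigl[\exp\Bigl(i\xi\textstyle\int_\R f_s\,dL_s\Bigr)\Bigr]=\exp\Bigl(-c_\beta|\xi|^\beta\int_\R|f_s|^\beta\,ds\Bigr),
\]
so $\int f_s\,dL_s$ is symmetric $\beta$-stable with scale $\sigma=c_\beta^{1/\beta}\|f\|_{L^\beta}$. Since a symmetric $\beta$-stable law of scale $\sigma$ is distributed as $\sigma Z_1$ for some fixed $Z_1$ of scale $1$, we get $\|\sigma Z_1\|_{\beta,\infty}^\beta=\sigma^\beta\|Z_1\|_{\beta,\infty}^\beta$; and $\|Z_1\|_{\beta,\infty}$ is finite because stable laws have regularly varying tails of index $-\beta$ and bounded density near the origin. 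This yields the lemma for deterministic $f$.

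\textbf{Step 2: Decoupling.} For a general predictable $F\in\ldl{0}{L}$, I introduce an independent copy $\tilde L$ of $L$, built as a tangent process on an enlarged product space so that $\tilde L$ is independent of $F$. The symmetric decoupling inequalities in \cite[Ch.~5, Ch.~8]{KW} provide a tail comparison of the form
\[
\P\Bigl(\bigl|\textstyle\int_\R F_s\,dL_s\bigr|>\lambda\Bigr)\leq K\,\P\Bigl(\bigl|\textstyle\int_\R F_s\,d\tilde L_s\bigr|>\lambda/K\Bigr)\qquad\text{for all }\lambda>0,
\]
with a constant $K$ depending only on $\beta$. This is the structural step that uses symmetry of $L$ and the construction of the KW stochastic integral.

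\textbf{Step 3: Conditioning and conclusion.} Conditionally on $\sigma(F)$, the integral $\int_\R F_s\,d\tilde L_s$ is, by Step 1 applied pathwise, symmetric $\beta$-stable with (random) scale $\bigl(c_\beta\int_\R|F_s|^\beta\,ds\bigr)^{1/\beta}$. Hence
\[
\P\Bigl(\bigl|\textstyle\int F_s\,d\tilde L_s\bigr|>\lambda\,\big|\,\sigma(F)\Bigr)\leq C\lambda^{-\beta}\int_\R|F_s|^\beta\,ds.
\]
Taking expectation, multiplying by $\lambda^\beta$, combining with the decoupling bound of Step 2, and taking the supremum over $\lambda>0$ delivers the claimed estimate $\|\int F\,dL\|_{\beta,\infty}^\beta\leq C\,\E[\int|F_s|^\beta\,ds]$.

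\textbf{Main obstacle.} Step 2 is by far the delicate part: the comparison of tails of a stochastic integral against a symmetric L\'evy process with its independent decoupled counterpart is precisely the content of the general apparatus in \cite[Chs.~5 and 8]{KW}, which relies on conditional symmetrization and the Lévy--Khintchine structure of $L$. Once this comparison is available, Steps 1 and 3 reduce to the standard stable tail bound together with Fubini, so essentially all the work of the lemma is absorbed into the KW decoupling machinery.
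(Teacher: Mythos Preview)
The paper does not give its own proof of this lemma; it is stated as a direct citation of \cite[Theorem~9.5.3]{KW} and used as a black box. Your three-step sketch---the deterministic stable case via the Rajput--Rosi\'nski characteristic function, then decoupling against an independent copy $\tilde L$, then conditioning on $F$ and applying the deterministic bound pathwise---is a faithful reconstruction of the argument behind that theorem in \cite{KW}. You correctly flag the tail-decoupling comparison in Step~2 as the only substantive ingredient; once that inequality from \cite[Chapters~5 and~8]{KW} is granted, Steps~1 and~3 are routine. The one point to keep in mind is that \cite{KW} is formulated for integrals over finite intervals, so the extension to $\int_\R$ needs the same remark the paper makes after the preceding lemma, but this does not affect the structure of your argument.
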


The next remark gives sufficient conditions for  the process $X$ 
introduced at \eqref{lss} to be well-defined.

\begin{rem}\label{Xintegrable}
Suppose that (A) is satisfied and define the two processes $F^{(1)}$ and $F^{(2)}$ by $F^{(1)}_s = (g(-s)-g_0(-s))\sigma_s$ and $F^{(2)}_s = g'(-s) \sigma_s$ for $s<0$.
Then the process  $X$ given by \eqref{lss} is well-defined if there exists a $\beta'>\beta$ such that 
\begin{equation}\label{eq:23234555}
\int_{-\infty}^{-\delta} \Big(| F^{(i)}_s|^\theta \1_{\{| F^{(i)}_s|\leq 1\}}+  | F^{(i)}_s|^{\beta'} \1_{\{| F^{(i)}_s|> 1\}}\Big)\,ds<\infty
\end{equation}
almost surely for $i=1,2$. 
%
To show the above we argue as follows:  For any $\beta'\in(\beta,2]$ we deduce from (A) 
and simple  calculations  the estimate 
 \begin{align}\label{Phi_0Est}
\int_\R \big(|ux|^2\wedge 1\big)\nu(d x)\leq C(|u|^{\theta}\mathds 1_{\{|u|\leq 1\}}+|u|^{\beta'}\mathds 1_{\{|u|>1\}}),\qquad u\in \R. 
\end{align}
Then, an application of the mean value theorem combined with  assumption \eqref{eq:23234555} yields that $\Phi_{0,L}(H^{(t)})<\infty$ almost surely for all $t>0$, where $H^{(t)}_s=(g(t-s)-g_0(-s))\sigma_{s}$. This guarantees the existence of  the process $X$ due to \cite[Theorem 9.1.1]{KW}.
\end{rem}
In our proofs we will need the following properties of the functional $\| \cdot \|_{L,q}$ defined in \eqref{ljsdfljsdf}. 
\begin{rem}
 The functional  $\| \cdot \|_{L,q}$ satisfies the following three properties: 
\begin{enumerate}[(i)]
 \item  \label{prop-1-aaaa} Homogeneity: For all $\lambda\in\R$, $F\in \ldl{q}{L}$, $\|\lambda F\|_{q,L}=|\lambda|\|F\|_{q,L}$.
 \item  \label{prop-2-aaaa} Triangle inequality (up to a constant): There exists a constant $C>0$ such that for all $m\in\N$ and $F^1,...,F^m\in\ldl{q}{L}$ we have 
\begin{align}\label{triangleine}
\|F^1+\dots +F^m\|_{q,L}\leq C\big(\|F^1\|_{q,L}+\dots+\|F^m\|_{q,L}\big).
\end{align}
\item \label{prop-3-aaaa} Upper bound: For all $F\in \ldl{q}{L}$ we have 
\begin{align}\label{NormVsPhiIneq}
\|F\|_{q,L}\leq \Phi_{q,L}^{1/2}(F)\vee \Phi^{1/q}_{q,L}(F).
\end{align}
\end{enumerate}
Property \eqref{prop-1-aaaa} follows directly from the definition of $\|\cdot\|_{L,q}$ in \eqref{ljsdfljsdf}. To show property \eqref{prop-2-aaaa} it is sufficient to derive \eqref{triangleine} for $F^1,...,F^m\in\ldlnr{q}{L}$, where $\ldlnr{q}{L}$ denotes the subspace of nonrandom processes in \ldl{q}{L}. We will show that there is a norm $\|\cdot \|'_{q,L}$ on $\ldlnr{q}{L}$ and $c>0$ and $C>0$ such that
\[c\|F \|'_{q,L}\leq \|F \|_{q,L}\leq C\|F \|'_{q,L},\quad\text{for all }F\in\ldlnr{q}{L},\]
which then implies \eqref{triangleine}.
To this end, let
\begin{align*}
\widetilde\phi_q(x)&:=(2/q|x|^q+1-2/q)\mathds 1_{\{|x|>1\}}+x^2\mathds 1_{\{|x|\leq 1\}}.
 \end{align*}
 Clearly, there exist $c,C>0$ such that
 \[c\widetilde\phi_{q}(x)\leq \phi_{q}(x)\leq C\widetilde\phi_{q}(x)\quad\text{for all }x\in\R.   \]
 Moreover, since the function $\widetilde\phi_q$ is convex, the functional
 \[\|F\|'_{q,L}=\inf\bigg\{\lambda\geq 0\,:\, \int_{\R^2} \widetilde\phi_{q}(F_su/\lambda) \diff s\, \nu (d u)\leq 1\bigg\}\]
 is a norm on $\ldlnr{q}{L}$, called the Luxemburg norm (cf. \cite[Chapter 1]{Mus}). Using convexity of $\widetilde\phi_q$ it follows by straightforward calculations that $c\|F \|'_{q,L}\leq \|F \|_{q,L}\leq C\|F \|'_{q,L}$ for all $F\in\ldlnr{q}{L}$. This implies \eqref{triangleine}. Finally, property \eqref{prop-3-aaaa} follows by the fact that  $\phi_q(\lambda x)\leq (\lambda^2\vee \lambda^q)\phi_q(x)$ for all $\lambda\geq 0$.
 \end{rem}

 We conclude this subsection with a remark on the situation when the integrator is a non-symmetric L\'evy process $(\wt L_t)_{t\in\R}$ with $\wt L_0=0$,  L\'evy measure $\wt\nu$, shift parameter $\eta$, without a Gaussian part, and the truncation function  $\tau\! : x\mapsto \mathds 1_{\{|x|<1\}}+\text{sign}(x)\mathds 1_{\{|x|\geq 1\}}$. That is, 
 for all $\theta\in \R$, 
 \begin{equation}
 \E[ e^{i \theta \tilde L_1} ] = \exp\Big( i\theta  \eta + \int_\R \big( e^{i\theta x} -1-i\theta \tau(x)\big)\,\wt \nu(dx)\Big).
 \end{equation}
For a predictable process $(F_t)_{t\in\R}$ define 
\[\Psi_{0,\wt L}(F)=\Big| \int_{\R^2} \tau(uF_s)-\tau(u)F_s \diff s\, \wt\nu(du) + \eta F_s \Big|. \]
Then, the condition
\begin{equation}\label{eqrlqwe}
 \Phi_{0,\wt L}(F)+\Psi_{0,\wt L}(F)<\infty \quad \text{almost surely}
\end{equation}
is sufficient for the integral $\int_\R F_s \diff \wt L_s$ to exist, and we write $F\in\ldl{0}{\wt L}$. Indeed, this is a consequence of \cite[Theorem~9.1.1 and  pp.~217--218]{KW} combined with the estimate 
 \cite[Lemma~2.8]{RajRos}.

\section{Proofs}\label{sec4}
\setcounter{equation}{0}
\renewcommand{\theequation}{\thesection.\arabic{equation}}

In this section we present the proofs of our main results.  Let us first briefly  comment on some of the techniques used  in the proofs of  Theorem~\ref{theorem-sigma}.

The proof of Theorem~\ref{theorem-sigma} (i) is  divided into two parts. First we show the theorem under the assumption that $L$ is a compound Poisson process with jumps bounded away from zero in absolute value by some $a>0$. In this situation,  (B1) ensures that the integral in \eqref{lss} can be defined $\omega$ by $\omega$, and the limit of $V(p;k)_t^n$ can be derived by similar means as in \cite{BLP}.
Thereafter, we argue that the contribution of the jumps of $L$ with absolute value $\leq a$ to the power variation becomes negligible as $a\to 0.$ 
The proof of Theorem~\ref{theorem-sigma} (ii)   relies on Bernstein's blocking technique combined with Theorem~\ref{maintheorem} (ii) and several approximation steps.  
A key step in the proof of Theorem~\ref{theorem-sigma} (iii) is an application of 
a suitable stochastic Fubini result. For this purpose we present and prove a stochastic Fubini theorem for L\'evy integrals with predictable integrands that is applicable under our assumptions.

Throughout the proofs we denote all positive constants that do not depend on $n$ or $\omega$ by $C$, eventhough they may change from line to line.
Similarly, we will denote by $K$ any positive random variable that does not depend on $n$, but may change from line to line.
For a random variable $Y$ and $q>0$ we denote $\|Y\|_q=\E[|Y|^q]^{1/q}.$ 
We frequently use the notation
\[g_{i,n}(s)= \sum_{j=0}^k (-1)^{j}\binom{k}{j}g((i-j)/n-s),\]
which allows us to express the $k$-th order increments of $X$ as
\[\din X=\int_{-\infty}^{i/n} g_{i,n}(s)\sigma_{s-}\diff L_s.\]
Recalling that $|g^{(k)}(s)|\leq C t^{\alpha-k}$ for all $s\in (0,\delta)$ and  $g^{(k)}$ is decreasing on $(\delta,\infty)$ by assumption (A) , Taylor expansion leads to the following important estimates.
\begin{lem} \label{g_{i,n}Est}
Suppose that assumption (A) is satisfied. It holds that
\begin{align*}
|g_{i,n}(s)|&\leq C(i/n-s)^\alpha\qquad\text{for }s\in[(i-k)/n,i/n],\\
|g_{i,n}(s)|&\leq Cn^{-k}((i-k)/n-s)^{\alpha-k}\qquad\text{for }s\in(i/n-\delta,(i-k)/n),\text{ and}\\
|g_{i,n}(s)|&\leq Cn^{-k}\big(\mathds 1_{[(i-k)/n-\delta,i/n-\delta]}(s)+g^{(k)}((i-k)/n-s)\mathds 1_{(-\infty,(i-k)/n-\delta)}(s)\\
&\text{for }s\in(-\infty, i/n-\delta].
\end{align*}
\end{lem}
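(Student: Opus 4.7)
The key observation is that $g_{i,n}(s)$ is the $k$-th forward difference of $g$ at $x=(i-k)/n-s$ with step $h=1/n$; substituting $m=k-j$ in the definition gives
\[
g_{i,n}(s)=\sum_{m=0}^{k}(-1)^{k-m}\binom{k}{m}g(x+mh)=\Delta_{h}^{k}g(x),
\]
and whenever $x>0$, so that all $k+1$ evaluation points lie in the interval of smoothness of $g$, one has the iterated integral representation
\[
\Delta_{h}^{k}g(x)=\int_{(0,h)^{k}}g^{(k)}(x+u_{1}+\cdots+u_{k})\,du_{1}\cdots du_{k}.
\]
The three bounds then follow by matching each region of $s$ to the appropriate upper estimate for $g^{(k)}$ coming from assumption (A). Throughout I would assume $n$ is large enough that $k/n<\delta$, the remaining finitely many values of $n$ being absorbed into the constant.

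For the first region $s\in[(i-k)/n,i/n]$, the integral representation is unavailable since some of the evaluation points $(i-j)/n-s$ may be nonpositive, where $g$ vanishes. I would estimate term by term: the asymptotic $g(t)\sim c_{0}t^{\alpha}$ as $t\downarrow 0$ combined with continuity of $g$ on $(0,\infty)$ yields $|g(t)|\leq Ct^{\alpha}$ on $(0,\delta]$, so that $|g((i-j)/n-s)|\leq C((i-j)/n-s)_{+}^{\alpha}\leq C(i/n-s)^{\alpha}$, and summing over $j=0,\dots,k$ delivers the first bound.

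For the second region $s\in(i/n-\delta,(i-k)/n)$, all arguments $x+mh$ lie in $(0,\delta)$, so the iterated representation applies. Using $|g^{(k)}(t)|\leq Ct^{\alpha-k}$ from (A) together with the monotonicity of $t\mapsto t^{\alpha-k}$ on $(0,\delta)$ (in the regime $\alpha<k$ relevant to the applications in the paper), I would replace the running argument by its smallest value $x$ to obtain $|g_{i,n}(s)|\leq Ch^{k}x^{\alpha-k}=Cn^{-k}((i-k)/n-s)^{\alpha-k}$.

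For the third region $s\leq i/n-\delta$, I would split further into $s\in[(i-k)/n-\delta,i/n-\delta]$ and $s<(i-k)/n-\delta$. On the first subinterval, the integration variable $x+u_{1}+\cdots+u_{k}$ stays in a compact neighbourhood of $\delta$ on which $g^{(k)}$ is bounded (it is continuous on $(0,\infty)$, controlled near $0$ by (A), and decreasing on $(\delta,\infty)$), yielding $|g_{i,n}(s)|\leq Cn^{-k}$. On the second subinterval, all arguments exceed $\delta$, and the assumed monotonicity of $|g^{(k)}|$ on $(\delta,\infty)$ lets me dominate the integrand by $|g^{(k)}(x)|=|g^{(k)}((i-k)/n-s)|$, producing the remaining term. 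No analytic step is subtle; the main obstacle is really just bookkeeping in the three-way case split together with verifying the forward-difference identity and its integral representation.
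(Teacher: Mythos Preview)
Your proposal is correct and follows essentially the same route as the paper. The paper's proof simply invokes ``Taylor expansion of order $k$'' for the second and third inequalities and ``condition \eqref{kshs}'' for the first; your iterated integral representation $\Delta_h^k g(x)=\int_{(0,h)^k}g^{(k)}(x+u_1+\cdots+u_k)\,du$ is precisely the exact form of that Taylor argument, and your term-by-term estimate in the first region is what the paper means by appealing directly to $g(t)\sim c_0 t^\alpha$. Your explicit note that the monotonicity step in the second region uses $\alpha<k$ is a fair observation: the lemma is only invoked in the paper in the proofs of parts (i) and (ii), where this is guaranteed.
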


\begin{proof}
The  first inequality  follows directly from condition \eqref{kshs} of (A). The second inequality
is a straightforward consequence of Taylor expansion of order $k$ and the condition
$|g^{(k)}(t)|\leq K t^{\alpha-k}$ for $t\in (0,\delta)$. The third inequality  follows again through Taylor expansion and the fact that the function $g^{(k)}$ is decreasing on $(\delta, \infty)$.
\end{proof}

\begin{rem}\label{eqreassdfsfd}
 Throughout the proofs we will generally assume that the process $\sigma$ is uniformly bounded on $[-\delta,\infty)$. That is, there exists a deterministic constant $C>0$ such that $|\sigma_s|<C$ for all $s\geq -\delta.$ This does not restrict the generality of our results, since we can apply the following localisation argument.
Let $(S_m)_{m\geq 1}$ be a sequence of $(\mathcal F_t)_{t\geq -\delta}$-stopping times with $S_m\uparrow\infty$, such that $|\sigma_{s-}|\mathds 1_{\{S_m>-\delta\}}$ is bounded for all $s\in [-\delta, S_m].$ Let the process $\sigma^{(m)}$ be defined by 
\[
\sigma^{(m)}_s = \sigma_s \mathds 1_{\{ s<S_m\}} + \sigma_{S_m-}\mathds 1_{\{s\geq S_m>-\delta\}}.
\]
The process $\sigma^{(m)}$ is again c\`adl\`ag and adapted. We define the process $(X_t^{(m)})_{t\geq 0}$ by replacing $\sigma$ in the definition of $X$ by $\sigma^{(m)}$. We note that $(X_t^{(m)})$ is well-defined since $(X_t)$ is well-defined and that assumption (B1) and (B2) hold for $(X_t^{(m)})$ if they hold for $(X_t)$.
It holds that $\din X^{(m)}\mathds 1_{\{S_m>t\}}=\din X\mathds 1_{\{S_m>t\}}$ almost surely.
It is therefore sufficient to show that Theorem \ref{theorem-sigma} holds for the processes $X^{(m)}$. Then the theorem follows for the process $X$ by letting $m\to \infty$.
\end{rem}


\subsection{Proof of Theorem \ref{theorem-sigma} (i)}


For the proof of Theorem \ref{theorem-sigma} (i) we follow the strategy from \cite[Thm. 1.1 (i)]{BLP}. We assume first that $L$ is a compound Poisson process with jumps bounded in absolute value away from zero by some $a>0$. Later on, we argue that the small jumps of $L$ are asymptotically negligible. Recall that in order to show functional $\mathcal F$-stable convergence on $\mathbb D(\R_+;\R)$ it is sufficient to show $\mathcal F$-stable convergence on $\mathbb D([0,t_\infty];\R)$, for arbitrary but fixed $t_\infty>0$ (cf. \cite[Chapter 3.3]{Whitt}). Throughout this subsection we will therefore fix a $t_\infty>0$, and denote by $\mathbb D$ the space $\mathbb D([0,t_\infty];\R)$ equipped with the Skorokhod $M_1$-topology, and by $\skorcon$ the $\mathcal F$-stable convergence of $\mathbb D$-valued processes.
 
\subsubsection{Compound Poisson Case}\label{ComPoiCas}

Suppose that $(L_t)_{t\in\R}$ is a symmetric compound Poisson process with L\'evy measure $\nu$, satisfying $\nu([-a,a])=0$ for some $a>0.$
 Let $0\leq T_1<T_2<...$ denote the jump times of $(L_t)_{t\geq 0}$ in increasing order.
For $\eps>0$ we define
\begin{align*}
 \Omega_\eps=\big\{\omega \in\Omega : &\text{ for all $m$ with $T_m(\omega)\in[0,t_\infty]$ we have $|T_m(\omega)-T_{m-1}(\omega)|>\eps$}\\
&\text{ and $\Delta L_s(\omega)=0$ for all $s\in [-\eps,0]$}\big\}.
\end{align*}
We note that $\Omega_\eps\uparrow \Omega,$ as $\eps\downarrow 0.$ Letting
\[M_{i,n,\eps}:=\int_{i/n-\eps}^{i/n} g_{i,n}(s)\sigma_{s-}\diff L_s,\quad\text{and}\quad R_{i,n,\eps}:=\int^{i/n-\eps}_\infty g_{i,n}(s)\sigma_{s-}\diff L_s,\]
we have the decomposition $\din X=M_{i,n,\eps}+R_{i,n,\eps}.$ It turns out that $M_{i,n,\eps}$ is the asymptotically dominating term, whereas $R_{i,n,\eps}$ is negligible as $n\to\infty.$

We show that, on $\Omega_\eps$,
\begin{align}\label{MLim}
n^{\alpha p}\sum_{i=k}^{[nt]} |M_{i,n,\eps}|^p\skorcon Z_t,\quad\text{where}\quad Z_t=\sum_{m:T_m\in (0,t]}|\Delta L_{T_m}\sigma_{T_m-}|^p V_m,
\end{align}
where $(V_m)_{m\geq 1}$ are defined in Theorem \ref{theorem-sigma} (i). Denote by $i_m$ the random index such that $T_m\in((i_m-1)/n,i_m/n].$
Then, we have on $\Omega_\eps$
\[n^{\alpha p}\sum_{i=k}^{[nt]} |M_{i,n,\eps}|^p=V^{n,\eps}_t,\]
where
\[V^{n,\eps}_t= n^{\alpha p} \sum_{m:T_m\in (0,[nt]/n]}|\Delta L_{T_m}\sigma_{T_m-}|^p\left(\sum_{l=0}^{v_t^m}|g_{i_m +l,n}(T_m)|^p\right).\]
Here the random index $v^m_{t}$ is defined as
\[ v^m_{t }= v_t^m(\eps,n)=
\begin{cases} 
[\eps n]\wedge ([nt]-i_m) & \text{if } T_m-([\eps n]+i_m)/n>-\eps, \\
[\eps n]-1 \wedge ([nt]-i_m) & \text{if }T_m-([\eps n]+i_m)/n\leq-\eps.
\end{cases}
\]
Additionally, we set $v^m_{t }=\infty$ if $T_m>[nt]/n.$ We remark that $v_t^m$ attains the value $[nt]-i_m$ only if $T_m\in (t-\eps,t]$, which is the case for at most one $m$. 
 For the proof of \eqref{MLim} we first show stable convergence of the finite dimensional distributions of $V^{n,\eps}$. Thereafter, we show that the sequence $(V^{n,\eps})_{n\geq 1}$ is tight and deduce the functional convergence $V^{n,\eps}\skorcon Z.$

\begin{lem}\label{fidi}
For $r\geq 1$ and $0\leq t_1<\dots< t_r\leq t_\infty$ we obtain on $\Omega_\eps$ the $\mathcal F$-stable convergence
\[(V^{n,\eps}_{t_1},\dots, V^{n,\eps}_{t_r})\tols (Z_{t_1},\dots, Z_{t_r}),\quad \text{ as }n\to\infty.\]
\end{lem}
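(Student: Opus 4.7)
The plan is to exploit the isolation of jumps on $\Omega_\eps$ to reduce $V^{n,\eps}_{t_j}$ to a finite sum indexed by the jump times $T_m$, analyse each inner sum via the Taylor expansion of $g$ near the origin, and assemble the finite-dimensional stable limit through the classical Fubini/Riemann-sum argument for fractional parts of random variables with continuous distributions. Setting $\xi^n_m := i_m - nT_m \in [0,1)$, I would rewrite
\begin{equation}
V^{n,\eps}_{t_j} = \sum_{m:\, T_m\in (0,[nt_j]/n]}|\Delta L_{T_m}\sigma_{T_m-}|^p\, S^{n,m}_{t_j},\qquad S^{n,m}_{t_j}:=n^{\alpha p}\sum_{l=0}^{v^m_{t_j}} |g_{i_m+l,n}(T_m)|^p.
\end{equation}
On $\Omega_\eps$ and for $n$ large enough that $1/n<\eps$, the index set $\{m:\, T_m\in (0,t_r]\}$ is finite and the corresponding intervals $((i_m-1)/n, i_m/n]$ are pairwise disjoint, so the contributions around different jumps do not interact.

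Substituting $s_{l,j}=(\xi^n_m+l-j)/n$ into the definition of $g_{i_m+l,n}(T_m)$ and using $g(s)\sim c_0 s^\alpha$ at $0^+$ together with $g\equiv 0$ on $(-\infty,0)$ yields the pointwise asymptotic identity $n^{\alpha}g_{i_m+l,n}(T_m)\to c_0\, h_k(\xi^n_m+l)$ for each fixed $l$. To pass from this to convergence of the full sum, $S^{n,m}_{t_j}\to |c_0|^p\sum_{l=0}^{\infty}|h_k(\xi^n_m+l)|^p$, I would apply dominated convergence with the aid of Lemma~\ref{g_{i,n}Est}: for $l\geq k+1$ with $(l+1)/n\leq\delta$ the second inequality there gives $n^{\alpha p}|g_{i_m+l,n}(T_m)|^p\leq C(l-k)^{(\alpha-k)p}$, which is summable because the hypothesis $\alpha<k-1/p$ forces $(\alpha-k)p<-1$; and for the remaining range where $(l-k)/n>\delta$, the third inequality together with the monotonicity and $L^\theta$-integrability of $g^{(k)}$ on $(\delta,\infty)$ bounds the tail by a Riemann-sum multiplied by $n^{1-pk}$, which vanishes.

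The $\mathcal F$-stable convergence of the fractional-part vector $(\xi^n_1,\ldots,\xi^n_M)$ to iid uniforms $(U_1,\ldots,U_M)$ on $[0,1]$ independent of $\mathcal F$ is the probabilistic heart of the argument. Conditionally on the total count of jumps in $[0,t_r]$ the times $(T_1,\ldots,T_M)$ admit an absolutely continuous joint Lebesgue density, since $L$ is compound Poisson. For any bounded $\mathcal F$-measurable $Y$ and any $f\in C_b([0,1]^M)$, writing $\E[Y f(\xi^n_1,\ldots,\xi^n_M)]$ as an integral against that density, changing variables $t_m=(k_m+u_m)/n$ and using the Riemann-sum approximation $\sum_{k_m} h(k_m/n)/n \to \int h$ yields $\E[Y]\int_{[0,1]^M} f(u)\,du$, which is precisely the definition of stable convergence. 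A continuous mapping argument then combines the two previous steps: the joint $\mathcal F$-stable convergence of $(\xi^n_m,\Delta L_{T_m},\sigma_{T_m-},\mathds 1_{T_m\in(0,[nt_j]/n]})_m$ to $(U_m,\Delta L_{T_m},\sigma_{T_m-},\mathds 1_{T_m\in(0,t_j]})_m$ (the indicator convergence being almost sure because $T_m$ has a continuous distribution), together with the uniform $l$-summability from the previous paragraph, propagates to the claimed stable convergence of $(V^{n,\eps}_{t_1},\ldots,V^{n,\eps}_{t_r})$ to $(Z_{t_1},\ldots,Z_{t_r})$. The main obstacle will be the careful tail control near the transition $l\approx n\delta$ between the Taylor regime and the $g^{(k)}$-regime, together with making the Fubini-based stable convergence argument fully rigorous when $\sigma$ and $L$ are correlated through the common filtration $(\mathcal F_t)$.
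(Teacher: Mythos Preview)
Your approach is correct and matches the paper's strategy in all essentials: reduce to finitely many jumps on $\Omega_\eps$, identify the inner sum via $g(t)\sim c_0 t^\alpha$ and the fractional parts $\xi^n_m=i_m-nT_m$, control the $l$-tail using Lemma~\ref{g_{i,n}Est} and $(\alpha-k)p<-1$, and invoke the stable convergence of $(\xi^n_m)_m$ to i.i.d.\ uniforms independent of $\mathcal F$. The paper organises these same ingredients slightly differently: instead of your all-at-once dominated-convergence-plus-continuous-mapping argument, it first truncates both the $m$- and $l$-sums at a finite level $d$, obtains the stable convergence of the truncated array $\{n^\alpha g_{i_m+l,n}(T_m)\}_{l,m\leq d}$ directly by citing \cite[Section~5.1]{BLP}, applies continuous mapping to the truncated statistic, and then lets $d\to\infty$ via the same summability bound $C|l-k|^{(\alpha-k)p}$ and Billingsley's metric approximation theorem. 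Your packaging is a bit more direct; the paper's double-truncation makes the continuous-mapping step cleaner because it avoids the infinite $l$-sum inside the mapping. One notational point to clean up: writing ``$n^{\alpha}g_{i_m+l,n}(T_m)\to c_0\, h_k(\xi^n_m+l)$'' is awkward since the right-hand side still depends on $n$; what you mean, and what you should write, is that the difference tends to zero.
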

\begin{proof}
Let $(U_i)_{i\geq 1}$ be i.i.d. $\mathcal U([0,1])$-distributed random variables, defined on an extension $(\Omega',\mathcal F',\P')$ of the original probability space, independent of $\mathcal F.$ By arguing as in \cite[Section 5.1]{BLP}, we deduce for any $d\geq 1$ the $\mathcal F$-stable convergence
\[\{n^\alpha g_{i_m+l,n}(T_m)\}_{l,m\leq d}\tols \{h_k(l+U_m)\}_{l,m\leq d}\]
as $n\to\infty$, where $h_k$ is defined in \eqref{def-h-13}.
Defining
\begin{align*}
V_t^{n,\eps,d}&:=n^{\alpha p} \sum_{m\leq d:T_m\in (0,[nt]/n]}|\Delta L_{T_m}\sigma_{T_m-}|^p\left(\sum_{l=0}^{d}|g_{i_m +l,n}(T_m)|^p\right)\\
Z^d_t& := \sum_{m\leq d:T_m\in (0,t]}|\Delta L_{T_m}\sigma_{T_m-}|^p\left(\sum_{l=0}^{d}|h_k(l+U_m)|^p\right),
\end{align*}
the continuous mapping theorem for stable convergence yields
\begin{align}\label{fidiconsta}
(V^{n,\eps,d}_{t_1},\dots, V^{n,\eps,d}_{t_r})\tols (Z^d_{t_1},\dots, Z^d_{t_r}),\quad \text{ for }n\to\infty,
\end{align}
for all $d\geq 1.$  It follows by Lemma \ref{g_{i,n}Est} for all $l$ with $k\leq l<[n\delta]$ that
\[n^{\alpha p}|g_{i_m+l,n}(T_m)|^p\leq C|l-k|^{(\alpha-k)p},\]
where we recall that $(\alpha-k)p<-1.$
Consequently, we find a random variable $K>0$ such that for all $t\in[0,t_\infty]$ 
\[|V^{n,\eps,d}_t-V^{n,\eps}_t|\leq K\bigg(\sum_{m>d: T_m\in[0,t_\infty]} |\Delta L_{T_m} \sigma_{T_m-}|^p+ \sum_{m: T_m\in[0,t_\infty]}\sum_{l=v_t^m\wedge d}^\infty |l-k|^{(\alpha-k)p}\bigg).\]
By definition, the random index $v_t^m=v_t^m(n,\omega)$ satisfies $\liminf_{n\to\infty} v^m_t(n,\omega)=\infty$ for all $\omega$ with $T_m(\omega)\neq t.$ Consequently, we obtain that $\limsup_{n\to\infty}|V^{n,\eps,d}_t-V^{n,\eps}_t|\to 0$ almost surely as $d\to\infty.$ It follows that on $\Omega_\eps$
\begin{align} \label{V^nvsV^nd}
\limsup_{n\to\infty} \bigg\{\sup_{t\in\{t_1,\dots,t_r\}}|V_t^{n,\eps}-V_t^{n,\eps,d}|\bigg\}\to 0,\quad\text{almost surely, as }d\to\infty.
\end{align}
By monotone convergence theorem
 we obtain that $\sup_{t\in[0,t_\infty]}|Z^d_t-Z_t|\to 0$ as $d\to\infty.$ Together with \eqref{fidiconsta} and \eqref{V^nvsV^nd}, this implies the statement of the lemma by a standard approximation argument, see for example \cite[Thm 3.2]{Bill}.
\end{proof}

Recall that the stable convergence $V^{n,\eps}\skorcon Z$ is equivalent to the convergence of $(V^{n,\eps},X)\tol (Z,X)$ for all $\mathcal F$-measurable random variables $X$, cf. \cite[Prop. 5.33]{JacShir}. Consequently, Lemma \ref{fidi} and the following lemma together with Prokhorov's theorem imply \eqref{MLim}, where we recall that $\mathbb D([0,t_{\infty}])$ equipped with the $M_1$ topology is a Polish space.

 \begin{lem}\label{tight}
The sequence of $\mathbb D$-valued processes $(V^{n,\eps})_{n\geq 1}$ is tight.
\end{lem}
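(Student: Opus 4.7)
The process $V^{n,\eps}$ is c\`adl\`ag and, as is clear from its definition, non-decreasing in $t$: raising $t$ only enlarges both the outer index set $\{m : T_m \in (0, [nt]/n]\}$ and the inner upper limit $v_t^m$, and every added summand is non-negative. Moreover $V^{n,\eps}_0 = 0$. The plan is to exploit this monotonicity to reduce $M_1$-tightness of $(V^{n,\eps})_{n}$ to tightness of the single scalar marginal $V^{n,\eps}_{t_\infty}$.

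For any non-decreasing $f \in \mathbb{D}([0, t_\infty]; \mathbb{R})$ and any $t_1 < t_2 < t_3$, the value $f(t_2)$ lies in the segment $[f(t_1), f(t_3)]$, so the $M_1$-oscillation modulus $w_{M_1}(f, \delta)$ vanishes for every $\delta > 0$. Combined with the standard $M_1$-compactness criterion (cf.\ \cite[Theorem 12.12.2]{Whitt}), this shows that the set
\[
K_M := \{f \in \mathbb{D}([0, t_\infty]; \mathbb{R}) : f \text{ non-decreasing},\ f(0) = 0,\ f(t_\infty) \le M\}
\]
is $M_1$-compact for every $M > 0$. Hence the required tightness will follow once I show that the family $\{V^{n,\eps}_{t_\infty}\}_{n \ge 1}$ is tight in $\R$.

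To obtain marginal tightness I would produce a single a.s.\ finite upper bound on $V^{n,\eps}_{t_\infty}$ that is independent of $n$. Using Lemma~\ref{g_{i,n}Est}, the cap $v_t^m \le [\eps n]$, and the assumption $(\alpha - k)p < -1$, the inner sum $\sum_{l=0}^{v_t^m} n^{\alpha p}|g_{i_m+l,n}(T_m)|^p$ can be bounded by a deterministic constant uniformly in $m$, $n$ and $t$. Invoking Remark~\ref{eqreassdfsfd} to assume $\sigma$ bounded, one then obtains
\[
V^{n,\eps}_{t_\infty} \le C \sum_{m : T_m \in (0, t_\infty]} |\Delta L_{T_m}|^p \quad \text{a.s.},
\]
whose right-hand side is a.s.\ finite because the compound Poisson process $L$ has only finitely many jumps in $[0, t_\infty]$. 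This bound is uniform in $n$ and delivers the required marginal tightness, completing the argument. The main potential obstacle is the clean invocation of the $M_1$-compactness criterion for monotone c\`adl\`ag functions; once this reduction to a scalar marginal is in hand, the remainder is a direct uniform-in-$n$ estimate and does not require any modulus-of-continuity control or Aldous-type tightness argument.
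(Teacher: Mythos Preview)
Your proposal is correct and follows essentially the same route as the paper: both arguments exploit that $V^{n,\eps}$ is non-decreasing to make the $M_1$-oscillation modulus vanish, reducing tightness in $\mathbb{D}$ to tightness of the single marginal $V^{n,\eps}_{t_\infty}$ via \cite[Theorem~12.12.3]{Whitt}. The only difference is cosmetic: the paper obtains marginal tightness directly from the finite-dimensional convergence already established in Lemma~\ref{fidi}, whereas you give an explicit a.s.\ uniform-in-$n$ bound using Lemma~\ref{g_{i,n}Est}; both work, and the paper's version is shorter since Lemma~\ref{fidi} is already available.
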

\begin{proof}
It is sufficient to show that the conditions of \cite[Theorem 12.12.3]{Whitt} are satisfied.
Condition (i) is satisfied, since the family of real valued random variables $(V_{t_\infty}^{n,\eps})_{n\geq 1}$ is tight by Lemma \ref{fidi}. Condition (ii) is satisfied, since the oscillating function $w_s$ introduced in \cite[chapter 12, (5.1)]{Whitt} satisfies $w_s(V^{n,\eps},\theta)=0$ for all $\theta>0$ and all $n$, since $V^{n,\eps}$ is increasing.
\end{proof}


This concludes the proof of \eqref{MLim}. Next we show that 
\begin{align}\label{TaiComPoi}
n^{\alpha p}\sum_{i=k}^{[nt_\infty]} |R_{i,n, \eps}|^p \toop 0.
\end{align}
Recalling that $\alpha<k-1/p$, it is sufficient to show that
\[\sup_{n\in\N,\ i\in\{k,\dots,[nt_\infty]\}}n^{k}|R_{i,n,\eps}|<\infty,\quad \text{almost surely.}\]
It follows from Lemma \ref{g_{i,n}Est} that
\[n^k |g_{i,n}(s)\sigma_{s-}|\leq C (\mathds 1_{[-\delta,t_\infty]}(s)+|g^{(k)}(-s)\sigma_{s-}|\mathds 1_{(-\infty,-\delta)}(s)):=\psi_s.\]
Let $\wt L = (\wt L_t)_{t\in\R}$ denote the total variation process defined as 
$\wt L_0=0$ and $\wt L_t -\wt L_u$ is the total variation of $v\mapsto L_v$ on $(u,t]$  for all $u<t$. Since $L$ is a compound Poisson process, the process $\wt L$ is well-defined, finite and we deduce from \cite[Theorem~21.9]{Sato} that  $\wt L$ is a L\'evy process with L\'evy measure  $\wt \nu = 2 \nu_{|\R_+}$ and shift parameter $\eta$ with respect to the truncation function $\tau\! : x\mapsto \mathds 1_{\{|x|<1\}}+\text{sign}(x)\mathds 1_{\{|x|\geq 1\}}$ given by $\eta = \int_\R \tau(x)\,\wt \nu(dx)$. 
Next we use the following estimate:
\[n^{k}|R_{i,n,\eps}|\leq \int_{(-\infty,\frac i n- \eps]} n^k |g_{i,n}(s)\sigma_{s-}|\,d  \wt  L_s \leq \int_{\R} \psi_s\, d \wt L_s .\]
The right-hand side is finite almost surely due to  the following Lemma~\ref{poiintbou}, and the proof of \eqref{TaiComPoi} is complete.

\begin{lem}\label{poiintbou} Let $L$ be a symmetric compound Poisson process with L\'evy measure $\nu$ satisfying $\nu([-a,a])=0$ for some $a\in(0,1]$ and let  $\wt L$ and $\psi$ be  given as above. Suppose, in addition,  that  (B1) is satisfied.   Then the stochastic integral $\int_\R \psi_s\,d\wt L_s$ exists and is finite almost surely. 
\end{lem}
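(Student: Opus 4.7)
My strategy is to verify the sufficient condition \eqref{eqrlqwe} for the existence of the integral $\int_\R\psi_s\,d\wt L_s$, namely to show that
\[
\Phi_{0,\wt L}(\psi)+\Psi_{0,\wt L}(\psi)<\infty \qquad \text{almost surely.}
\]
Once existence is granted in the Kwapie\'n--Woyczy\'nski sense, almost sure finiteness is automatic: since $\psi\geq 0$ and $\wt L$ is an increasing process (being the total-variation of the compound Poisson $L$), the integral is a nonnegative, a.s.\ finite random variable.

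The key step is a deterministic pointwise-in-$v$ bound for the two inner integrals appearing in $\Phi_0$ and $\Psi_0$. Because $\wt\nu=2\nu_{|\R_+}$ has the same tail decay as $\nu$, assumption (A) and exactly the computation behind \eqref{Phi_0Est} give, for every $v\in\R$ and with $\rho\leq 1\wedge\theta$, $\beta'>\beta$ as in (B1),
\[
\int_\R \phi_0(uv)\,\wt\nu(du)\leq C\bigl(|v|^\theta\mathds 1_{\{|v|\leq 1\}}+|v|^{\beta'}\mathds 1_{\{|v|>1\}}\bigr)\leq C\bigl(|v|^\rho\vee|v|^{\beta'}\bigr),
\]
where the last step uses $\rho\leq\theta$ to dominate $|v|^\theta$ by $|v|^\rho$ on $\{|v|\leq 1\}$. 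For the $\Psi_0$ contribution, I exploit the defining choice $\eta=\int_\R\tau(u)\,\wt\nu(du)$ of the shift of the compound Poisson $\wt L$: the linear-in-$v$ terms then cancel and the integrand at $s$ reduces to $\int_\R \tau(u\psi_s)\,\wt\nu(du)$. Since $|\tau(x)|\leq |x|\wedge 1$, an analogous tail computation yields
\[
\Big|\int_\R \tau(uv)\,\wt\nu(du)\Big|\leq \int_\R(|uv|\wedge 1)\,\wt\nu(du)\leq C\bigl(|v|^\rho\vee|v|^{\beta'}\bigr),
\]
with the same $\rho$ and $\beta'$.

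Finally I integrate in $s$, splitting at $-\delta$. On $[-\delta,t_\infty]$ one has $\psi_s\equiv 1$, and because $\wt\nu$ is a finite L\'evy measure the contribution to $\Phi_0+\Psi_0$ is a deterministic finite constant times $(t_\infty+\delta)$; on $(-\infty,-\delta)$ one has $\psi_s=|H_s|$, so the contribution is bounded by
\[
C\int_{-\infty}^{-\delta}\bigl(|H_s|^\rho\vee|H_s|^{\beta'}\bigr)\,ds,
\]
which is almost surely finite by (B1). Combining, \eqref{eqrlqwe} holds and the lemma follows. The technical heart of the argument is the uniform-in-$v$ estimate on $\int \phi_0(uv)\,\wt\nu(du)$ and $\int(|uv|\wedge 1)\,\wt\nu(du)$; the delicate point is the borderline case $\theta=1$, where a naive bound would produce a logarithmic blow-up in $1/|v|$, which is precisely the scenario ruled out by the strict inequality $\rho<1$ built into (B1) for $\theta=1$.
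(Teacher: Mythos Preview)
Your proposal is correct and follows essentially the same approach as the paper's proof: both verify the sufficient condition $\Phi_{0,\wt L}(\psi)+\Psi_{0,\wt L}(\psi)<\infty$ a.s., exploit the cancellation coming from $\eta=\int_\R\tau(u)\,\wt\nu(du)$ to reduce $\Psi_{0,\wt L}(\psi)$ to $\int_{\R^2}\tau(u\psi_s)\,ds\,\wt\nu(du)$, and then derive a pointwise bound of the form $\int_\R|\tau(uv)|\,\wt\nu(du)\leq C(|v|^{\rho}\mathds 1_{\{|v|\leq 1\}}+\text{const}\cdot\mathds 1_{\{|v|>1\}})$ via the tail estimate on $\nu$, with the logarithmic borderline case $\theta=1$ handled by the requirement $\rho<1$ in (B1). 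The paper spells out the case distinction $\theta<1$, $\theta=1$, $\theta>1$ in the tail computation a bit more explicitly, but your summary captures the same content.
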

\begin{proof}
To show that the stochastic integral $\int_\R \psi_s\,d\wt L_s$ is well-defined it is
 enough to prove that $\Phi_{0,\wt L}(\psi)+\Psi_{0,\wt L}(\psi)<\infty$ almost surely (see \eqref{eqrlqwe} of Section~\ref{sec2}).
For some $\beta'>\beta$  we have from (B1) that 
\[\int_\R|\psi_s|^{\theta}\mathds 1_{\{|\psi_s|\leq 1\}}+|\psi_s|^{\beta'}\mathds 1_{\{|\psi_s|>1\}}\diff s<\infty,\quad\text{a.s.}\]
This implies that $\Phi_{0,\wt L}(\psi)<\infty$ almost surely (cf.\ \eqref{Phi_0Est}). 
Next we note that 
\begin{align}
\Psi_{0,L}(\psi) ={}&   \Big|  \int_{\R^2} \tau(x\psi_s)-\tau(u)\psi_s \diff s\, \wt\nu(dx) + \eta \psi_s \Big| 
 =  \Big|  \int_{\R^2} \tau(x\psi_s) \diff s\, \wt\nu(dx) \Big| ,
\end{align}
where the second equality follows by definition of $\eta$ above. Hence, to show that  $\Psi_{0,L}(\psi)<\infty$ almost surely,  it suffices according to (B1) to  derive the following estimate.
There exists a constant $C>0$ such that for all $u\in\R$
\begin{align}\label{Psiest}
 \int_\R |\tau(ux)|\,\wt\nu(dx)\leq C\big(|u|^{\rho}\mathds 1_{\{|u|\leq 1\}}+\mathds 1_{\{|u|> 1\}}\big). 
\end{align}
where $\rho$ is as in assumption (B1).
By the definitions of $\tau$ and $\wt \nu$ we have that 
\begin{equation}\label{eqrljadda}
\int_\R |\tau(ux)|\,\wt\nu(dx)= | u | \int_{\{|x| \leq |u|^{-1}\}} |x| \,\nu(dx) + \nu\big(x\in\R: |xu|>1\big). 
\end{equation}
We recall that $\limsup_{t\to\infty}\nu([t,\infty))t^\theta<\infty.$ Since $\nu$ is finite, there exists $C_0>0$ such that $\nu([t,\infty))\leq C_0/t^\theta$ for all $t\geq a$. Consequently, we obtain for all $t\geq a$ and $f(u)=\mathds 1_{[t,\infty)}(u)$ 
\[\int_a^\infty f(x)\,\nu(dx)\leq \frac{C_0}{\theta}\int_a^\infty f(x)x^{-\theta-1}\diff x.\] 
By monotone approximation, the inequality remains valid for all nondecreasing $f:[a,\infty)\to \R_+.$ 
We estimate the first term on the right-hand side of \eqref{eqrljadda} as follows:
\begin{align}
 | u | \int_{\{|x| \leq |u|^{-1}\}} |x| \,\nu(dx) {}& \leq (C_0/\theta)\1_{\{|u|\leq a^{-1}\}} |u| \int_a^{|u|^{-1}} |x|^{-\theta}\,dx
\\ {}& 
\leq C\1_{\{|u|\leq a^{-1}\}}   \begin{cases}
|u|^{\theta} & \qquad \theta< 1 \\ 
 |u| (\log(1/|u|)+ \log(1/a)) & \qquad \theta = 1\\ 
 |u|& \qquad \theta >1. 
\end{cases}
\end{align}
For the second term on the right-hand side of \eqref{eqrljadda} we use the following estimate
\begin{equation}
\nu\big(x\in\R: |xu|>1\big)\leq C (\1_{\{|u|> 1\}}+(|u|^{-1})^{-\theta}\1_{\{|u|\leq 1\}})= C (\1_{\{|u|>1\}}+|u|^{\theta}\1_{\{|u|\leq 1\}})
\end{equation}
for all $u\in\R$, which completes the proof of \eqref{Psiest} and hence of the lemma.  
\end{proof}

Recalling the decomposition $\din X=M_{i,n,\eps}+R_{i,n,\eps}$ we obtain by Minkowski's inequality 
\[\sup_{t\in[0,t_\infty]}\bigg|\big(n^{\alpha p}V(p;k)^n_t\big)^{\frac 1 p}-\bigg(n^{\alpha p}\sum_{i=k}^{[nt]} |M_{i,n,\eps}|^p\bigg)^{\frac 1 p}\bigg|\leq\bigg(n^{\alpha p}\sum_{i=k}^{[nt_\infty]} |R_{i,n,\eps}|^p\bigg)^{ \frac 1 p}.\]
Therefore, by virtue of \eqref{MLim} and \eqref{TaiComPoi}, we conclude that
\[n^{\alpha p}V(p;k)^n_t\skorcon Z_t \quad\text{on $\Omega_\eps.$}\]
By letting $\eps\to 0$ we conclude that Theorem \ref{theorem-sigma} (i) holds, when $L$ is a compound Poisson process with jumps bounded away from 0.

\subsubsection {Decomposition into big and small jumps}\label{SmaJum}

In this section we extend the proof of Theorem \ref{theorem-sigma} (i) to general symmetric L\'evy processes $(L_t)_{t\in\R}$. We need the following preliminary result.

\begin{lem}\label{PhiQGro}
Let $q\geq 1$ and $a\in(0, 1].$ The function 
\[\xi(y)=\int_{-a}^a |yx|^2\mathds 1_{\{|yx|\leq 1\}}+|yx|^q\mathds 1_{\{|yx|>1\}}\nu( d x)\]
satisfies $|\xi(y)|\leq C(|y|^2\mathds 1_{\{|y\leq 1|\}}+|y|^{\beta'\vee q}\mathds 1_{\{|y>1|\}})$ for any $\beta'>\beta$, where $C$ does not depend on $a.$
\end{lem}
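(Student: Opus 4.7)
The inequality is a fairly routine consequence of the Blumenthal--Getoor condition, which guarantees $\int_{-1}^1 |x|^{\beta'} \nu(dx) < \infty$ for every $\beta'>\beta$. The main idea is to split the estimate at the threshold $|y|\le 1$ versus $|y|>1$, and inside each piece exploit the indicator functions to localise $x$ on the scale $1/|y|$.

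For $|y|\le 1$: Since $|x|\le a\le 1$, we have $|yx|\le 1$, so the second indicator vanishes and
\[
\xi(y) \;=\; |y|^2\int_{-a}^a |x|^2\,\nu(dx)\;\le\;|y|^2\int_{-1}^1 |x|^2\,\nu(dx)\;\le\; C|y|^2,
\]
with $C$ independent of $a$, because we enlarge the integration domain from $[-a,a]$ to $[-1,1]$.

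For $|y|>1$: I would first argue that it suffices to treat the case $\beta'\in(\beta,2)$: any larger $\beta'$ only weakens the right-hand side, as $\beta'\vee q$ is non-decreasing in $\beta'$ and $|y|>1$. Then on $\{|yx|\le 1\}$ I bound $|yx|^2 = |yx|^{\beta'}|yx|^{2-\beta'}\le |yx|^{\beta'}$, which yields
\[
\int_{-a}^a |yx|^2\mathds 1_{\{|yx|\le 1\}}\,\nu(dx)\;\le\; |y|^{\beta'}\int_{-1}^1|x|^{\beta'}\,\nu(dx)\;\le\; C|y|^{\beta'}\;\le\; C|y|^{\beta'\vee q}.
\]
For the second integrand I would split according to whether $q\ge \beta'$ or $q<\beta'$. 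If $q\ge\beta'$, use $|x|^{q}\le |x|^{\beta'}$ (valid because $|x|\le a\le 1$) to get the bound $|y|^q\int|x|^{\beta'}\nu(dx)\le C|y|^q$. If $q<\beta'$, use instead $|yx|^q\le |yx|^{\beta'}$ on the set $\{|yx|>1\}$, which produces the bound $|y|^{\beta'}\int |x|^{\beta'}\,\nu(dx)\le C|y|^{\beta'}$. In either sub-case the bound is $C|y|^{\beta'\vee q}$.

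There is no genuine obstacle: the only point requiring care is that the constant $C$ must be independent of $a$, which is automatic because every bounding integral is taken over $[-1,1]$ and involves only $\int_{-1}^1|x|^{\beta'}\nu(dx)$ or $\int_{-1}^1|x|^2\nu(dx)$, both finite by the choice $\beta'>\beta$ and the fact that $\nu$ is a L\'evy measure. Combining the two regimes yields the asserted estimate.
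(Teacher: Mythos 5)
Your proposal is correct and takes essentially the same route as the paper: the paper decomposes $\xi=\xi_1+\xi_2$ according to the two indicators, bounds the $\{|yx|\le 1\}$ part by $|y|^2\int_{-1}^1 x^2\,\nu(dx)$ for $|y|\le 1$ and by a power $|y|^{\beta'}$ for $|y|>1$, and for the $\{|yx|>1\}$ part raises the exponent from $q$ to $\beta'$ when $q$ is small and uses $\int_{-1}^1|x|^{q}\,\nu(dx)<\infty$ when $q>\beta$ (you split at $q\gtrless\beta'$ instead of $q\gtrless\beta$, which is immaterial). In both arguments the constant is independent of $a$ because every bounding integral is over $[-1,1]$, so there is nothing to add.
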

\begin{proof}
Use the decomposition $\xi=\xi_1+\xi_2$ with
\[\xi_1(y)=\int_{-a}^a |yx|^2\mathds 1_{\{|yx|\leq 1\}}\,\nu(dx),\quad\text{and}\quad \xi_2(y)=\int_{-a}^a |yx|^q\mathds 1_{\{|yx|> 1\}}\,\nu(dx).\]
We obtain
\[\xi_1(y)\mathds 1_{\{|y|\leq 1\}}\leq |y|^2\int_{-1}^1 x^2\nu(dx)\mathds 1_{\{|y|\leq 1\}},\]
and $\xi_1(y)\mathds 1_{\{|y|> 1\}}\leq C|y|^{\beta'\vee q}\mathds 1_{\{|y|>1\}}$ follows from \eqref{Phi_0Est}, showing that $\xi_1$ satisfies the estimate given in the lemma.
 For $q>\beta$ we obtain  
\[\xi_2(y)=2|y|^q \mathds 1_{\{|y|>1/a\}}\int_{1/|y|}^a |x|^q \nu( d x)\leq C|y|^q \mathds 1_{\{|y|\geq 1\}}.\]
If $q\leq \beta$ we have for any $\beta'>\beta$ 
\[\xi_2(y)\leq2|y|^{\beta'} \mathds 1_{\{|y|>1/a\}}\int_{1/|y|}^{a} |x|^{\beta'} \nu( d x)\leq C |y|^{\beta'}\mathds 1_{\{|y|\geq 1\}},\]
which completes the proof.
\end{proof}
 

Now, given a general symmetric L\'evy process $(L_t)_{t\in\R},$ consider for $a>0$ the compound Poisson process $(L^\lj_t)_{t\in\R}$ defined by
\[ L_t^\lj-L_s^\lj= \sum_{s<u\leq t} \Delta L_{u}\mathds 1_{\{|\Delta L_{u}|> a\}},\quad L^\lj_0=0.\]
Moreover, let $(L^\sj_t)_{t\in\R}$ denote the L\'evy process $(L_t-L_t^\lj)_{t\in\R}$. The key result of this section is the following approximation lemma. Intuitively, the lemma shows that replacing $(L_t)_{t\in\R}$ by $(L_t^\lj)_{t\in\R}$ in the definition of $X$ has a negligible effect for $a\to 0.$
\begin{lem}\label{SmaJumCon} It holds that
\begin{align*}
\lim_{a\to 0}\limsup_{n\to\infty}\bigg\| n^{\alpha p} \sum_{i=k}^{[nt_\infty]} \bigg| \int_{-\infty}^{i/n} g_{i,n}(s)\sigma_{s-} \diff L_s^\sj\bigg|^p \bigg\|_1 = 0.
\end{align*}
\end{lem}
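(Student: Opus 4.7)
I would apply the Kwapi\'en--Woyczy\'nski integral estimates from Section~\ref{sec2} to the symmetric L\'evy process $L^{\leq a}$, whose L\'evy measure is $\nu$ restricted to $[-a,a]$. The strategy is to extract explicit $a$-dependent prefactors, built from
$$v(a) := \int_{-a}^a u^2\,\nu(du)\qquad\text{and}\qquad w(a) := \int_{-a}^a |u|^p\,\nu(du),$$
both of which vanish as $a\downarrow 0$---the latter because $p>\beta$---so that after summation over $i$ and multiplication by $n^{\alpha p}$ the total contribution can be made arbitrarily small.

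Since $p\geq 1$, Lemma~\ref{KwaWoyIso} applied to $L^{\leq a}$ yields
$$\mathbb{E}\Big|\int_{-\infty}^{i/n}g_{i,n}(s)\sigma_{s-}\,dL_s^{\leq a}\Big|^p \leq C\,\mathbb{E}\bigl[\|F^{(i,n)}\|_{p, L^{\leq a}}^p\bigr],$$
where $F^{(i,n)}_s := g_{i,n}(s)\sigma_{s-}\mathds{1}_{(-\infty,i/n]}(s)$, and by property~(iii) of $\|\cdot\|_{p, L^{\leq a}}$ this is further bounded by $C\,\mathbb{E}[\Phi_i^{p/2}\vee\Phi_i]$, where $\Phi_i := \Phi_{p,L^{\leq a}}(F^{(i,n)})$. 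An $a$-refined inspection of the proof of Lemma~\ref{PhiQGro}, obtained by separating the cases $|y|\leq 1/a$ and $|y|>1/a$, yields the sharper pointwise bound
$$\int_{-a}^a \phi_p(yu)\,\nu(du)\leq v(a)\,y^2 + w(a)\,|y|^{\beta'\vee p}\mathds{1}_{\{|y|>1\}},$$
so that after integration in $s$ one obtains $\Phi_i\leq C(v(a)+w(a))\,J^n_i$ with $J^n_i := \int\bigl[|F^{(i,n)}_s|^2\mathds{1}_{\{|F^{(i,n)}_s|\leq 1\}}+|F^{(i,n)}_s|^{\beta'\vee p}\mathds{1}_{\{|F^{(i,n)}_s|>1\}}\bigr]\,ds$.

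The remaining task, which is the main obstacle, is to verify that $n^{\alpha p}\sum_{i=k}^{[nt_\infty]}\mathbb{E}[\Phi_i^{p/2}\vee\Phi_i]$ stays controlled in the iterated limit $\limsup_n$ followed by $\lim_{a\downarrow 0}$. For this I would rely on the three-region decomposition of $g_{i,n}$ from Lemma~\ref{g_{i,n}Est}, the localization of $\sigma$ in Remark~\ref{eqreassdfsfd}, the overlap structure of the supports (each $s$ belongs to only $O(1)$ of the near intervals $[(i-k)/n,i/n]$), and assumption~(B1) to control the far region. The combination of the three regions with the dichotomies $|F_s|\leq 1$ versus $|F_s|>1$ and $p\leq 2$ versus $p>2$ from property~(iii) makes the bookkeeping intricate; in particular, near the boundary $s=(i-k)/n$ of the middle region $|F_s|$ may exceed $1$, which forces the use of the $|F|^{\beta'\vee p}$ branch, and its integrability must then be matched against assumption~(B1). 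Once each region is dominated by a constant times $(v(a)+w(a))^{p/2\wedge 1}$, the vanishing of the prefactor delivers the claim.
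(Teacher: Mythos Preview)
Your overall plan---apply Lemma~\ref{KwaWoyIso} to $L^{\leq a}$, bound $\|F^{(i,n)}\|_{p,L^{\leq a}}$ via $\Phi_{p,L^{\leq a}}$, and extract an $a$-small prefactor from the restricted L\'evy measure---is exactly the mechanism the paper uses. The main structural difference is that the paper first splits the integral at $s=-\delta$ into a near part $A_{i,n}$ and a far part $B_{i,n}$. This split buys a great deal: for the far part one simply observes that $|g_{i,n}(s)|\leq Cn^{-k}|g^{(k)}(-s)|$ on $(-\infty,-\delta]$, so after rescaling the $p$th moment picks up a factor $n^{p(\alpha+1/p-k)}\to 0$, and one only needs $\E[\|g^{(k)}(-\cdot)\sigma\|_{p,L^{\leq a}}^p]<\infty$, which is immediate from (B1) and Lemma~\ref{PhiQGro}. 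No $a$-smallness is used here at all. For the near part on $(-\delta,i/n]$, $\sigma$ is bounded (Remark~\ref{eqreassdfsfd}), and the paper invokes \cite[Eq.~(4.23)]{BLP} to get $\Phi_{p,L^{\leq a}}(n^{\alpha+1/p}g_{k,n})\leq C\int_{|x|\leq a}(|x|^p+x^2)\,\nu(dx)$ uniformly in $n$, which is precisely your $v(a)+w(a)$. So the paper offloads the delicate near/middle computation to an existing lemma rather than reproducing it.

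Your route tries to carry the $(v(a)+w(a))$ factor through globally, which forces you to control $n^{\alpha p}\sum_i \E[(J_i^n)^{p/2}\vee J_i^n]$ uniformly in $n$. One correction and one caution. First, your pointwise bound $\int_{-a}^a\phi_p(yu)\,\nu(du)\leq v(a)y^2+w(a)|y|^{\beta'\vee p}\mathds 1_{\{|y|>1\}}$ actually gives an untruncated $v(a)y^2$, so passing to $|y|^2\mathds 1_{\{|y|\leq 1\}}+|y|^{\beta'\vee p}\mathds 1_{\{|y|>1\}}$ requires $\beta'\vee p\geq 2$; otherwise the exponent on the $\{|y|>1\}$ piece must be $2\vee p$. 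Second, and more importantly, your remark that integrability near the boundary $s=(i-k)/n$ of the middle region ``must be matched against assumption~(B1)'' is off target: (B1) concerns only $(-\infty,-\delta]$. On the middle region $\sigma$ is bounded, and the difficulty is purely that the second estimate in Lemma~\ref{g_{i,n}Est}, $|g_{i,n}(s)|\leq Cn^{-k}((i-k)/n-s)^{\alpha-k}$, overestimates near the boundary and gives a divergent $\int|F|^{q}\,ds$ for any $q\geq p$ since $(\alpha-k)p<-1$. The fix is to combine it with the trivial bound $|g_{i,n}(s)|\leq C\sum_j|g((i-j)/n-s)|\leq Cn^{-\alpha}$ for $s$ within $O(1/n)$ of $(i-k)/n$; the crossover occurs at $(i-k)/n-s\sim 1/n$, and using the two bounds on their respective sides yields the correct uniform-in-$n$ control. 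This refinement is effectively what is packaged into \cite[Eq.~(4.23)]{BLP}.
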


\begin{proof}
We make the decomposition
\[\int_{-\infty}^{i/n} g_{i,n}(s)\sigma_{s-} \diff L_s^\sj=A_{i,n}+B_{i,n},\]
where
\[A_{i,n}=\int_{-\delta}^{i/n} g_{i,n}(s)\sigma_{s-} \diff L_s^\sj \quad \text{and }\quad B_{i,n}=\int_{-\infty}^{-\delta} g_{i,n}(s)\sigma_{s-} \diff L_s^\sj.\]
Lemma \ref{KwaWoyIso} shows that
\begin{align*}
\bigg\|n^{\alpha p}\sum_{i=k}^{[nt_\infty]}| A_{i,n}|^p\bigg\|_{1}
&= n^{-1}\sum_{i=k}^{[nt_\infty]} \bigg\|\int_{-\delta}^{i/n} n^{\alpha+1/p}g_{i,n}(s)\sigma_{s-} \diff L_s^\sj\bigg\|_{p}^p\\
&\leq C n^{-1}\sum_{i=k}^{[nt_\infty]}\E\bigg[\big\| F^{i,n} \big\|^p_{p,L^\sj}\bigg],
\end{align*}
where the process $(F^{i,n}_t)_{t\in\R}$ is defined as $F^{i,n}_t=n^{\alpha+1/p}g_{i,n}(t)\mathds 1_{(-\delta,i/n]}(t)\sigma_{t-}$.
Since the random variable $\sup_{t\in[-\delta,\infty)}|\sigma_t|$ is uniformly bounded (see Remark~\ref{eqreassdfsfd}), we obtain by \eqref{NormVsPhiIneq} and \cite[Eq.(4.23)]{BLP} 
\begin{align*}
\E\big[\| F^{i,n} \|^p_{p,L^\sj}\big]
&  \leq C\| n^{\alpha+1/p}g_{i,n}\mathds 1_{[-\delta,i/n]} \|^p_{p,L^\sj}\\
& \leq C |\Phi_{p, L^\sj} (n^{\alpha+1/p}g_{k,n})|^{p/2}\vee |\Phi_{p, L^\sj} (n^{\alpha+1/p}g_{k,n})|\\
& \leq C \bigg(\int_{|x|\leq a} |x|^p+x^2 \nu( d x)\bigg)^{p/2}\vee \bigg(\int_{|x|\leq a} |x|^p+x^2 \nu( d x)\bigg),
\end{align*}
for all $n\in\N$ and $ i\in\{k,\dots,[nt_\infty]\}$. Since $p>\beta$ by assumption, we conclude that 
\begin{align}\label{A_{i,n}Con}
\limsup_{n\to\infty}\bigg\|n^{\alpha p} \sum_{i=k}^{[nt_\infty]} | A_{i,n}|^p\bigg\|_1 \to 0,\quad\text{as }a\to 0.
\end{align}
Next, we show that for all $a>0$ 
\begin{align}\label{B_{i,n}Con}
\limsup_{n \to \infty}\bigg\|n^{\alpha p} \sum_{i=k}^{[nt_\infty]} |B_{i,n}|^p\bigg\|_{1} =0.
\end{align}
Introducing the processes $(Y^{i,n}_t)_{t\in\R}$ and $(Y_t)_{t\in\R}$ defined as
\begin{align*}
Y^{i,n}_t&=n^{\alpha+1/p}g_{i,n}(t)\sigma_{t-}\mathds 1_{(-\infty,-\delta]}(t),\ \text{and}\\
Y_t&= |g^{(k)}(-t)\sigma_{t-}\mathds 1_{(-\infty,-\delta]}(t)|,
\end{align*}
we obtain by Lemma \ref{KwaWoyIso} that
\begin{align}\label{BEst1}
\bigg\|n^{\alpha p} \sum_{i=k}^{[nt_\infty]} |B_{i,n}|^p\bigg\|_{1} &\leq C n^{-1} \sum_{i=k}^{[nt_\infty]} \E\big[\|  Y^{i,n} \|^p_{p,L^\sj}\big].
\end{align}
Moreover, recalling that $|g^{(k)}|$ is decreasing on $(\delta,\infty)$, an application of Lemma \ref{g_{i,n}Est} shows that
\[\E\big[\|  Y^{i,n}  \|^p_{p,L^\sj}\big]\leq n^{p(\alpha +1/p-k)}\E\big[\| Y \|^p_{p,L^\sj}\big],\]
for all $i\in\{k,\dots,n\}.$
Since $\alpha+1/p-k<0$, equation \eqref{B_{i,n}Con} follows if $\E\big[\|  Y \|^p_{p,L^\sj}\big]<\infty.$
 Applying the estimate (\ref{NormVsPhiIneq}) shows that this is satisfied if $\E\big[ \Phi^{1\vee \frac p 2}_{p,L^\sj}( Y)\big]<\infty,$
which is a consequence of (B1) and Lemma \ref{PhiQGro}, where we used that $p>\beta$.
Now, the result follows from \eqref{A_{i,n}Con} and \eqref{B_{i,n}Con}.
\end{proof}

We can complete the proof of Theorem \ref{theorem-sigma} (i) by combining Lemma \ref{SmaJumCon} with the results of Section \ref{ComPoiCas}. To this end, let
\[
X^\lj_t:=\int_{-\infty}^t (g(t-s)-g_0(-s))\sigma_{s-} \diff L^\lj_s,\quad 
X^\sj_t:=\int_{-\infty}^t (g(t-s)-g_0(-s))\sigma_{s-} \diff L^\sj_s,
\]
and introduce the stopping times
\[T_m^\lj := 
\begin{cases}
T_m & \text{if } |\Delta L_{T_m}|>a,\\
\infty & \text{else}.
\end{cases}
\]
The results of Subsection \ref{ComPoiCas} show that
\[n^{\alpha p}V(X^\lj,p;k)_t^n \skorcon Z_t^\lj:=\sum_{m:T^\lj_m\in (0,t]}|\Delta L_{T^\lj_m}\sigma_{T^\lj_m-}|^p V_m \]
for all $a>0,$ where $V(X^\lj,p;k)_t^n$ denotes the power variation of the process $X^\lj.$ Making the decomposition
\begin{align*}
\big(n^{\alpha p}V(p;k)_t^n\big)^{1/p}
&=\big(n^{\alpha p}V(X^\lj,p;k)_t^n\big)^{1/p}+\bigg(\big(n^{\alpha p}V(p;k)_t^n\big)^{1/p}-\big(n^{\alpha p}V(X^\lj,p;k)_t^n\big)^{1/p}\bigg)\\
&:= U^{n,\lj}_t+U^{n,\sj}_t,
\end{align*}
we have by Minkowski's inequality 
\[\lim_{a\to 0}\limsup_{n\to\infty}\P(\sup_{t\in[0,t_\infty]}|U_t^{n,\sj}|>\eps)\leq \lim_{a\to 0}\limsup_{n\to\infty}\P(n^{\alpha p}V(X^\sj,p;k)^n_{t_\infty}>\eps^p)=0,\]
for all $\eps>0$, 
where we applied Lemma \ref{SmaJumCon}. Since $U^{n,\lj}_t \skorcon Z_t^\lj$ as $n\to\infty,$ and $\sup_{t\in[0,t_\infty]}|Z_t^\lj-Z_t|\to 0$ almost surely, as $a\to 0,$ Theorem \ref{theorem-sigma} (i) follows from \cite[Thm 3.2]{Bill}.\qed

\begin{rem} \label{Bernbloc}
A popular strategy for extending laws of large numbers from a class of processes with constant volatility to a more general model, which includes a stochastic volatility factor, is Bernstein's blocking technique. This technique has for example been applied for Brownian semi-stationary processes and  It\^o semimartingales; see e.g. \cite{BCP11,BGJPS}. Also, we will apply this technique in the next subsection to prove Theorem \ref{theorem-sigma}(ii). However, in the framework of Theorem \ref{theorem-sigma}(i) this approach is not applicable. A crucial step for the blocking technique is showing that the asymptotics of the power variation does not change if we replace the increments $\din X$ by $\sigma_{\frac{i-k}n}\din G$, where the process $G$ is defined as
\[G_t=\int_{-\infty}^t \{g(t-s)-g_0(-s)\}~dL_s,\]
see e.g.\ Section~\ref{stableproof} below. 
It can be shown that this is generally not true in the framework of Theorem \ref{theorem-sigma}(i). 
For instance, when $L$ is a compound Poisson process with L\'evy measure $\nu=\delta_{\{1\}}+ \delta_{\{-1\}}$ and $\sigma=L$, this approximation fails.
\end{rem}

\subsection{Proof of Theorem \ref{theorem-sigma} (ii)}\label{stableproof}

Since $t\mapsto V(p;k)^n_t$ is increasing and the limiting function is continuous, uniform convergence on compact sets in probability  follows if we show
\[n^{-1+p(\alpha + 1/\beta)}V(p;k)^n_t \toop m_p \int_0^t |\sigma_s|^p ds\]
for a fixed $t>0,$ which we will do in the following. 

As already mentioned, we will use Bernstein's blocking technique  to prove Theorem~\ref{theorem-sigma}(ii). A crucial step in the proof is to show that the  asymptotic behavior of the power variation does not change if we replace $\din X$ in \eqref{vn} by $\sigma_{(i-k)/n}\din G$, where the process $(G_t)_{t\geq 0}$ is defined as 
in Remark \ref{Bernbloc}.
Note that assumption (A) ensures that $G$ is well-defined.
Thereafter, we apply the following blocking technique. We divide the interval $[0,t]$ into subblocks of size $1/l$ and freeze $\sigma$ at the beginning of each block. The limiting power variation for the resulting process can then be derived by applying part (ii) of Theorem \ref{maintheorem} on every block. The proof of Theorem \ref{theorem-sigma} (ii) is then completed by letting $l\to\infty.$

The following lemma plays an important role when replacing $\din X$ in \eqref{vn} by $\sigma_{(i-k)/n}\din G$. Here and in the following we denote by $v_\sigma$ the modulus of continuity of $\sigma$ defined  as 
\[v_\sigma(s,\eta)=\sup\{|\sigma_s-\sigma_r|:  r\in[s-\eta,s+\eta]\}.\]
\begin{lem}\label{vineq} Let $(\sigma_t)_{t\in\R}$ be a process with c\`adl\`ag or c\`agl\`ad sample paths that is uniformly bounded on $[-\delta,\infty)$. For any $\alpha,q\in(0,\infty)$ and any (deterministic) sequence $(a_n)_{n\geq 1}$ with $a_n\to 0$ we have
\[\lim_{\eps\to 0}\left[\limsup_{n\to\infty} \left(\frac 1 n\suonetn\|v_{\sigma}(i/n , \eps+a_n)\|^\alpha_q \right)\right]= 0.\]
\end{lem}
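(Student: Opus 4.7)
My plan is to bound the discrete Riemann-sum type average by a Lebesgue integral that depends continuously on $\eps$, and then to exploit the fact that càdlàg (or càglàd) sample paths have only countably many jumps. Throughout the argument I will freely invoke the uniform bound $|\sigma_s|\leq M$ on $[-\delta,\infty)$ from Remark \ref{eqreassdfsfd}, which gives $v_\sigma(s,\eta)\leq 2M$ whenever $s-\eta\geq -\delta$; this will supply the dominating function at the end. Note that since $i/n\geq k/n\geq 1/n$ and $\eps,a_n,1/n$ will all be small, the requirement $i/n-(\eps+a_n+1/n)\geq-\delta$ is automatic in the relevant regime.

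The first step is a simple continuity-type comparison between $v_\sigma$ evaluated at the grid point $i/n$ and $v_\sigma$ evaluated at a nearby real $s'$. A one-line triangle-inequality argument yields, for any $s'\in [(i-1)/n,i/n]$ and any $\eta>0$,
\[
v_\sigma(i/n,\eta)\;\leq\; 2\,v_\sigma(s',\eta+1/n),
\]
because both $i/n$ and any $r\in[i/n-\eta,i/n+\eta]$ lie in $[s'-(\eta+1/n),\,s'+(\eta+1/n)]$. Since $v_\sigma\geq 0$, the map $Y\mapsto\|Y\|_q^\alpha$ is order preserving and homogeneous of degree $\alpha$; integrating over $s'\in[(i-1)/n,i/n]$ and summing from $i=k$ to $[nt]$ therefore gives the key domination
\[
\frac{1}{n}\suonetn\|v_\sigma(i/n,\eps+a_n)\|_q^\alpha\;\leq\; 2^\alpha\int_0^t\|v_\sigma(s,\eps+a_n+1/n)\|_q^\alpha\,ds.
\]
For all $n$ large enough that $a_n+1/n\leq\eps$, monotonicity of $v_\sigma$ in its second argument further dominates the integrand by $\|v_\sigma(s,2\eps)\|_q^\alpha$, uniformly in $n$.

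The remaining task is to show $\int_0^t\|v_\sigma(s,2\eps)\|_q^\alpha\,ds\to 0$ as $\eps\downarrow 0$. Because $\sigma$ is càdlàg (or càglàd), for every $\omega$ and every $s$ one has $v_\sigma(s,2\eps)\downarrow|\Delta\sigma_s|$ as $\eps\downarrow 0$, and the convergence is dominated by $2M$. Two successive applications of dominated convergence (first in $\omega$ for fixed $s$, then in $s$ against the integrable dominator $(2M)^\alpha\mathds 1_{[0,t]}$) reduce matters to showing $\int_0^t\|\Delta\sigma_s\|_q^\alpha\,ds=0$. A Tonelli argument closes the proof: for each $\omega$ the sample path $\sigma_\cdot(\omega)$ has only countably many jumps, hence
\[
\int_0^t\E[|\Delta\sigma_s|^q]\,ds\;=\;\E\Bigl[\int_0^t|\Delta\sigma_s|^q\,ds\Bigr]\;=\;0,
\]
so $\|\Delta\sigma_s\|_q=0$ for Lebesgue-a.e.\ $s$, and the desired integral vanishes.

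The main obstacle is the first step: one must arrange the comparison inequality so that the deterministic shift from $i/n$ to $s'$ is absorbed entirely into the \emph{second} argument of $v_\sigma$, and by no more than $1/n$. Only then does integrating in $s'$ produce a genuine Lebesgue integral in which one can safely pass $\eps\to 0$. Once this domination is in hand, everything else is routine dominated-convergence and Fubini bookkeeping exploiting the countability of the jump set of a càdlàg path.
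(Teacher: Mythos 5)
Your proof is correct, and it takes a genuinely different route from the paper. You dominate the discrete average by a Lebesgue integral via the pointwise comparison $v_\sigma(i/n,\eta)\leq 2\,v_\sigma(s',\eta+1/n)$ for $s'\in[(i-1)/n,i/n]$ (which is verified correctly), obtaining $\tfrac1n\sum_{i=k}^{[nt]}\|v_\sigma(i/n,\eps+a_n)\|_q^\alpha\leq 2^\alpha\int_0^t\|v_\sigma(s,2\eps)\|_q^\alpha\,ds$ for large $n$, and then you kill the right-hand side as $\eps\downarrow 0$ by dominated convergence together with the Fubini--Tonelli observation that $\E[|\Delta\sigma_s|^q]=0$ for Lebesgue-a.e.\ $s$, since each c\`adl\`ag path has only countably many jumps. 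The paper instead stays entirely at the level of the discrete sums: it truncates the jumps of $\sigma$ at a level $\kappa$, writes $\sigma=\sigma^{\scriptscriptstyle<\kappa}+\sigma^{\scriptscriptstyle\geq\kappa}$, localizes on sets $\Omega_m$ (needed because the two pieces need not inherit the boundedness of $\sigma$), proves by a subsequence/contradiction argument that the small-jump part contributes at most $2\kappa^\alpha$ in the double limit, bounds the large-jump part by Jensen's inequality and a counting argument (only $O((\eps+a_n)n)$ indices are affected by the finitely many large jumps on $\Omega_m$), and finally lets $\kappa\to 0$. Your argument is shorter and more transparent, avoiding the truncation level and the contradiction step, at the modest price of needing the (routine, but worth one sentence) joint measurability of $(s,\omega)\mapsto v_\sigma(s,\eta)(\omega)$ — e.g.\ by writing the supremum over rationals in $[s-\eta,s+\eta]$ plus the right endpoint — so that the integral $\int_0^t\|v_\sigma(s,\cdot)\|_q^\alpha\,ds$ is well defined and the final Tonelli step is legitimate; the paper's discrete-sum approach never needs measurability in the time variable, only in $\omega$ at fixed times.
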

\begin{proof}
Since $v_\sigma$ is bounded and $x\mapsto x^\alpha$ is locally Lipschitz for $\alpha>1,$ we may assume w.l.o.g. that $\alpha\leq 1$ and $q\geq 1$. For $\kappa>0$ we use the decomposition
$\sigma=\sigma^{\kappas}+\sigma^{\kappal},$
where
\[\sigma_s^{\kappal}=\sum_{-\delta< u\leq s}\Delta\sigma_u\mathds 1_{\{|\Delta\sigma_u|\geq \kappa\}},\]
and $\sigma_s^{\kappas}=\sigma_s-\sigma_s^{\kappal}.$ Eventhough $\sigma$ is uniformly bounded on $[-\delta,\infty)$, $\sigma^{\kappal}$ and $\sigma^{\kappas}$ might not be. For this reason we introduce the sets 
\begin{align*}
\Omega_m:=\big\{
	&\omega\,:\, |\sigma_s^{\kappas}(\omega)|+|\sigma_s^{\kappal}(\omega)|\leq m \text{ for all }s\in[-\delta,t+\delta], \\
	&\text{ and $\sigma^{\kappal}(\omega)$ has less than $m$ jumps in $[-\delta,t+\delta]$}\big\}.
\end{align*}
Note that $\Omega_m\uparrow \Omega$, as $m\to\infty.$
By triangular inequality we have $v_\sigma(s,\eta)\leq v_{\sigma^{\kappas}}(s,\eta)\mathds 1_{\Omega_m}+v_{\sigma^{\kappal}}(s,\eta)\mathds 1_{\Omega_m}+C\mathds 1_{\Omega_m^{\mathrm c}}$ for all $s\in [0,t],\eta<\delta$ and $m\geq 1$. Since $\P(\Omega_m^{\mathrm c})\to 0$ as $m\to\infty$, we can choose $m$ sufficiently large such that 
\begin{align}\label{VEst}
&\frac 1 n\suonetn\|v_\sigma(i/n , \eps+a_n)\|^\alpha_q \nonumber\\
&\hspace{2em} \leq \frac 1 n\suonetn\|v_{\sigma^{\kappas}}(i/n , \eps+a_n)\mathds 1_{\Omega_m}\|^\alpha_q +\frac 1 n\suonetn\|v_{\sigma^{\kappal}}(i/n , \eps+a_n)\mathds 1_{\Omega_m}\|^\alpha_q +\kappa,
\end{align}
for all $n\in\N$ and $\eps>0.$
We show that
\begin{align}\label{kappacontin}
\limsup_{\eps\to 0}\,\limsup_{n\to\infty}\left(\frac 1 n\suonetn\|v_{\sigma^{\kappas}}(i/n , \eps+a_n)\mathds 1_{\Omega_m}\|^\alpha_q \right)\leq 2\kappa^\alpha.
\end{align}
In order to do so, we assume the existence of sequences $(\eps_l),(n_l),(i_l)$ with $\eps_l\to 0,$ $n_l\to\infty$ and $i_l\in\{1,...,[tn_l]\}$ such that
\begin{align}\label{contra1}
 \|v_{\sigma^{\kappas}}(i_l/n_l, \eps_l+a_{n_l})\mathds 1_{\Omega_m}\|^\alpha_q >2\kappa^\alpha 
 \end{align}
for all $l,$ and derive a contradiction. Since $(i_l/n_l)_{l\geq 1}$ is a bounded sequence we may assume that $i_l/n_l$ converges to some $s_0\in[0,t]$ by considering a suitable subsequence $(l_k)_{k\geq 1}.$ 
For all $\omega\in\Omega_m$ it holds that $\lim_{\gamma\to 0} v_{\sigma^{\kappas}}(s_0 , \gamma)=|\Delta \sigma^{\kappas}_{s_0}|\leq \kappa$. Therefore, by the dominated convergence theorem, we can find a $\gamma>0$ such that $\|v_{\sigma^{\kappas}}(s_0 , \gamma)\mathds 1_{\Omega_m}\|^\alpha_q\leq 2\kappa^\alpha$.
This is a contradiction to \eqref{contra1}, since for sufficiently large $l$ we have $[i_l/n_l-\eps_l-|a_{n_l}|,i_l/n_l+\eps_l+|a_{n_l}|]\subset [s_0-\gamma,s_0+\gamma].$
This completes the proof of (\ref{kappacontin}).

Next, we show that 
\begin{align}\label{VLarJum}
\lim_{\eps\to 0}\limsup_{n\to \infty}\left(\frac 1 n\suonetn\|v_{\sigma^{\kappal}}(i/n , \eps+a_n)\mathds 1_{\Omega_m}\|^\alpha_q\right)=0.
\end{align}
Recalling that $q/\alpha\geq 1$, an application of Jensen's inequality yields
\begin{align*}
\frac 1 n \suonetn \|v_{\sigma^{\kappal}}(i/n , \eps+a_n) \mathds 1_{\Omega_{m}}\|^\alpha_q
&\leq \left(t^{q/\alpha-1}\frac 1 n \suonetn\|v_{\sigma^{\kappal}}(i/n , \eps+a_n) \mathds 1_{\Omega_{m}}\|_q^q\right)^{\alpha/q}\nonumber\\
&=\bigg\| t^{q/\alpha-1}\frac 1 n \suonetn \big(v_{\sigma^{\kappal}}(i/n , \eps+a_n) \mathds 1_{\Omega_{m}}\big)^q\bigg\|_1^{\alpha/q},
\end{align*}
for all $n\in\N,\ \eps>0.$ Now, \eqref{VLarJum} follows from the estimate
\[\frac 1 n\suonetn\big(v_{\sigma^{\kappal}}(i/n , \eps+a_n)\mathds 1_{\Omega_m}\big)^q\leq \sup_{s\in[-\delta,t+\delta]}|\Delta\sigma^\kappal_s|^q N\mathds 1_{\Omega_m}2(\eps+a_n)\leq C m^{q+1}(\eps+a_n),\]
for all $n\in\N$.
Here $N=N(\omega)$ denotes the number of jumps of $\sigma^\kappal$ in $[-\delta,t+\delta].$
Using \eqref{kappacontin} and \eqref{VLarJum}, the lemma now follows from \eqref{VEst} by letting $\kappa\to 0.$
\end{proof}

The proof of \ref{theorem-sigma} (ii) heavily relies on the estimate given in Lemma \ref{WeaLBetIso}. This lemma assumes the role that It\^o's isometry typically plays for the blocking technique when the involved stochastic integral is driven by a Brownian motion. In order to apply Lemma \ref{WeaLBetIso}, the following estimates will be crucial.

\begin{lem}\label{StableLIntegrandEstimate} Suppose that assumption (B2) holds, and assume that $\alpha+1/\beta<k$. For $\eps>0$ with $\eps\leq \delta$ there is a constant $C>0$ such that
\begin{align}
\E\bigg[\int_{\frac i n-\eps}^{\frac in}|g_{i,n}(s)\sigma_{s-}|^\beta \diff s\bigg]+\int_{\frac i n-\eps}^{\frac i n}|g_{i,n}(s)|^\beta \diff s 
&\leq C n^{-\alpha\beta-1},\quad \text{and}\\
\E\bigg[\int_{-\infty}^{\frac i n-\eps}|g_{i,n}(s)\sigma_{s-}|^\beta \diff s\bigg]+\int_{-\infty}^{\frac i n-\eps}|g_{i,n}(s)|^\beta \diff s
&\leq C n^{-k\beta },
\end{align}
for all $i\in\{k,\dots,n\}.$
\end{lem}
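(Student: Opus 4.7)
The plan is to reduce everything to deterministic estimates of $\int|g_{i,n}(s)|^\beta\,ds$ by invoking Remark~\ref{eqreassdfsfd} on the range where $\sigma$ is uniformly bounded, and then to compare the tail contribution with the process $H$ from assumption (B2). Since we may assume (by Remark~\ref{eqreassdfsfd}) that $|\sigma_{s-}|\leq C$ for $s\geq -\delta$, and since $\eps\leq\delta$ and $i\geq k$ force $i/n-\eps\geq -\delta$, the first inequality is purely deterministic: it suffices to prove $\int_{i/n-\eps}^{i/n}|g_{i,n}(s)|^\beta\,ds\leq Cn^{-\alpha\beta-1}$. The second inequality will split as $(-\infty,-\delta)\cup[-\delta,i/n-\eps]$, with $\sigma$-boundedness handling the right piece and (B2) handling the left.

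For the first claim I would split $[i/n-\eps,i/n]=[i/n-\eps,(i-k)/n]\cup[(i-k)/n,i/n]$. On the right piece, the first bound of Lemma~\ref{g_{i,n}Est} gives $\int_{(i-k)/n}^{i/n}(i/n-s)^{\alpha\beta}\,ds=C(k/n)^{\alpha\beta+1}\leq Cn^{-\alpha\beta-1}$, as desired. On the left piece, the second bound of Lemma~\ref{g_{i,n}Est} yields an integrand proportional to $n^{-k\beta}((i-k)/n-s)^{(\alpha-k)\beta}$, and here the exponent $(\alpha-k)\beta<-1$ makes the integral divergent if one naively extends all the way to $(i-k)/n$. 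The resolution is to cut off at $(i-k)/n-c/n$ for a fixed $c>0$: on $[i/n-\eps,(i-k)/n-c/n]$, a direct computation gives $n^{-k\beta}\cdot n^{(k-\alpha)\beta-1}=n^{-\alpha\beta-1}$ (here the exponent arithmetic is the main check), while on the remaining short interval $[(i-k)/n-c/n,(i-k)/n]$ one uses the uniform bound $|g_{i,n}(s)|\leq Cn^{-\alpha}$ (available from the first bound of Lemma~\ref{g_{i,n}Est} extended via triangle inequality, since $g(t)\leq Ct^\alpha$ for $t$ small) to get $Cn^{-\alpha\beta}\cdot (c/n)=Cn^{-\alpha\beta-1}$. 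This is the delicate step; everything else is routine.

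For the second claim, on $[-\delta,i/n-\eps]$ the process $\sigma$ is uniformly bounded, so I reduce to the deterministic integral. Split this range further as $[-\delta,i/n-\delta]\cup[i/n-\delta,i/n-\eps]$. On $[-\delta,i/n-\delta]$, the third bound of Lemma~\ref{g_{i,n}Est} gives two contributions: the indicator part integrates trivially to $Cn^{-k\beta}\cdot(k/n)\leq Cn^{-k\beta}$, and the $g^{(k)}$ part, after substituting $u=(i-k)/n-s$, yields $Cn^{-k\beta}\int_\delta^\infty|g^{(k)}(u)|^\beta\,du\leq Cn^{-k\beta}$ by the $L^\theta=L^\beta$ integrability of $g^{(k)}$ on $(\delta,\infty)$ from assumption (A). On $[i/n-\delta,i/n-\eps]$, the second bound of Lemma~\ref{g_{i,n}Est} and the same substitution $u=(i-k)/n-s$ give $C(\eps)n^{-k\beta}$ since the integrand $u^{(\alpha-k)\beta}$ is integrable on $[\eps/2,\delta]$ for all $n$ large; the constant depends on $\eps$, which is allowed.

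Finally, on $(-\infty,-\delta)$, I write $\E[\int_{-\infty}^{-\delta}|g_{i,n}(s)\sigma_{s-}|^\beta\,ds]$ and note that for $s\leq -\delta$ and $i\geq k$, we have $s\leq(i-k)/n-\delta$, so the third bound of Lemma~\ref{g_{i,n}Est} gives $|g_{i,n}(s)|\leq Cn^{-k}g^{(k)}((i-k)/n-s)$. Since $g^{(k)}$ is decreasing on $(\delta,\infty)$ and $(i-k)/n-s\geq -s\geq \delta$, we can bound $g^{(k)}((i-k)/n-s)\leq g^{(k)}(-s)$, so $|g_{i,n}(s)\sigma_{s-}|^\beta\leq Cn^{-k\beta}|H_s|^\beta$ pointwise, and (B2) yields the bound $Cn^{-k\beta}$ after taking expectations. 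For the deterministic version, the analogous estimate follows by plugging $\sigma\equiv 1$ and using the $L^\beta$ bound on $g^{(k)}$ already mentioned. The main technical obstacle in the whole proof is the careful handling of the singularity of bound~2 at $(i-k)/n$ in the first claim; all remaining steps reduce to bookkeeping once the correct splits have been chosen.
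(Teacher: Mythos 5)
Your proof is correct, and its overall route is the same as the paper's: piecewise bounds from Lemma~\ref{g_{i,n}Est}, boundedness of $\sigma$ on $[-\delta,\infty)$ via Remark~\ref{eqreassdfsfd}, and assumptions (A) (with $\theta=\beta$ in the stable case) together with (B2) for the contribution on $(-\infty,-\delta]$, where your comparison $|g^{(k)}((i-k)/n-s)|\leq |g^{(k)}(-s)|$ by monotonicity and the identification with $|H_s|$ is exactly what is needed. The genuinely valuable part of your write-up is the treatment of the first inequality near $s=(i-k)/n$: since $\alpha+1/\beta<k$ gives $(\alpha-k)\beta<-1$, the bound $n^{-k\beta}((i-k)/n-s)^{(\alpha-k)\beta}$ is \emph{not} integrable up to $(i-k)/n$, so the estimate displayed in the paper's proof, integrated literally over $[i/n-\eps,(i-k)/n]$, is infinite; your cutoff at $(i-k)/n-c/n$, the computation $n^{-k\beta}\cdot n^{(k-\alpha)\beta-1}=n^{-\alpha\beta-1}$ on $[i/n-\eps,(i-k)/n-c/n]$, and the uniform bound $|g_{i,n}(s)|\leq Cn^{-\alpha}$ (from $g(t)\leq Ct^{\alpha}$ near $0$ and the triangle inequality) on the remaining interval of length $c/n$ supply precisely the missing repair, so your argument is in this respect more careful than the paper's one-line ``calculate the integral of the right-hand side.'' The remaining steps (the $(i/n-s)^{\alpha\beta}$ piece, the $[i/n-\delta,i/n-\eps]$ piece with an $\eps$-dependent constant, and the indicator and $g^{(k)}$ parts of the third bound of Lemma~\ref{g_{i,n}Est}) match the paper's intent; just make explicit, as you hint, that the finitely many indices $n$ with, say, $\eps\leq k/n$ or $(k+c)/n$ outside the range where $g(t)\leq Ct^\alpha$ are absorbed into the constant, which is legitimate because each individual integral is finite.
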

\begin{proof} By Lemma \ref{g_{i,n}Est} we have that
\begin{align*}
&|g_{i,n}(s)|^\beta\mathds 1_{[i/n-\eps,i/n]}(s)\\
&\eqspace\leq C\big((i/n-s)^{\alpha\beta}\mathds 1_{[(i-k)/n,i/n]}(s)+n^{-k\beta }((i-k)/n-s)^{(\alpha -k)\beta}\mathds 1_{[i/n-\eps,(i-k)/n]}(s)\big).
\end{align*}
Recalling that $\sigma$ is bounded on $[-\delta,\infty),$ the first inequality follows by calculating the integral of the right hand side.
The second inequality is a direct consequence of Lemma \ref{g_{i,n}Est} and assumptions (A) and (B2).
  \end{proof}
A crucial step in the proof of Theorem \ref{theorem-sigma} (ii) is showing that
\begin{align}
 n^{-1+p(\alpha+1/\beta)}\sum_{i=k}^{[nt]}\|\din X -\sigma_{(i-k)/n}\din G\|_p^p\to 0\label{sum},
\end{align}
as $n\to\infty,$ where the process $G$ is defined in Remark \ref{Bernbloc}.
We fix some $\eps>0$ and make the decomposition
\[\din X -\sigma_{(i-k)/n}\din G = A_i^{n,\eps} +B_i^{n,\eps} + C_i^{n,\eps},\]
where
\begin{align*}
A_i^{n,\eps}	&= \int_{i/n-\eps}^{i/n} g_{i,n}(s)(\sigma_{s-}-\sigma_{i/n-\eps}) \diff L_s,\\
B_i^{n,\eps}	&= (\sigma_{i/n-\eps}-\sigma_{(i-k)/n}) \int_{i/n-\eps}^{i/n} g_{i,n}(s) \diff L_s,\\
C_i^{n,\eps}	&=\int_{-\infty}^{i/n-\eps} g_{i,n}(s)\sigma_{s-} \diff L_s - \sigma_{(i-k)/n}\int_{-\infty}^{i/n-\eps} g_{i,n}(s)\diff L_s.
\end{align*}
We deduce \eqref{sum} by showing that
\[\lim_{\eps\to 0}\limsup_{n\to\infty}\bigg( n^{-1+p(\alpha+1/\beta)}\sum_{i=k}^{[nt]} \|A_{i}^{n,\eps}\|^p_{p}\bigg)=0,\]
and the same for $B_i^{n,\eps}$ and $ C_i^{n,\eps}$, respectively.
For $A_i^{n,\eps}$ we obtain by Lemma \ref{WeaLBetIso}
\begin{align*}
&n^{-1+p(\alpha+1/\beta)}\sum_{i=k}^{[nt]} \|A_{i}^{n,\eps}\|^p_{p}\\
&\eqspace\leq C n^{-1+p(\alpha+1/\beta)} \sum_{i=k}^{[nt]} \bigg\{\E\bigg[\int_{i/n-\eps}^{i/n} \big|g_{i,n}(s)(\sigma_{s-}-\sigma_{i/n-\eps})\big|^\beta \diff s\bigg]\bigg\}^{p/\beta}\\
&\eqspace\leq C n^{-1+p(\alpha+1/\beta)} \sum_{i=k}^{[nt]} \|v_\sigma(i/n,\eps+1/n)\|^p_\beta\left( \int_{i/n-\eps}^{i/n} |g_{i,n}(s)|^\beta  \diff s\right)^{p/\beta}.\\
\end{align*}
By Lemma \ref{vineq} and Lemma \ref{StableLIntegrandEstimate} we conclude that
\begin{align}\label{ACon}
\lim_{\eps\to 0}\limsup_{n\to\infty} \bigg(n^{-1+p(\alpha+1/\beta)}\suonetn \|A_{i}^{n,\eps}\|^p_{p}\bigg)= 0.
\end{align}
For $B_{i}^{n,\eps}$ we apply H\"older's inequality with $p'$ and $q'$ satisfying $1/p'+1/q'=1$ and $pq'<\beta$, which is possible due to our assumption $p<\beta.$ This yields
\begin{align*}
&n^{-1+p(\alpha+1/\beta)}  \sum_{i=k}^{[nt]} \|B_{i}^{n,\eps}\|^p_{p}\\
&\eqspace\leq n^{-1+p(\alpha+1/\beta)}  \sum_{i=k}^{[nt]} \|(\sigma_{i/n-\eps}-\sigma_{(i-k)/n})\|^p_{pp'}\bigg\|\int_{i/n-\eps}^{i/n} g_{i,n}(s)\diff L_s\bigg\|^p_{pq'},\\
&\eqspace\leq C n^{-1} \sum_{i=k}^{[nt]} \big\|v_\sigma(i/n,\eps+k/n)\big\|^p_{pp'}.
\end{align*}
Here we have used that, as a consequence of Lemma \ref{WeaLBetIne} and Lemma \ref{StableLIntegrandEstimate}, whenever $pq'<\beta$ there exists a $C>0$ such that $\|n^{\alpha+1/\beta}\int_{i/n-\eps}^{i/n} g_{i,n}(s)\diff L_s\|_{pq'}<C$ for all $n\in\N,$ $i\in\{k,...,[nt]\}$.
Thus, by Lemma \ref{vineq}
\begin{align}\label{BCon}
\lim_{\eps\to 0}\limsup_{n\to\infty}\bigg( n^{-1+p(\alpha+1/\beta)}\suonetn \|B_{i}^{n,\eps}\|^p_{p}\bigg)= 0.
\end{align}
Moreover, by Lemma \ref{WeaLBetIso} and Lemma \ref{StableLIntegrandEstimate} it follows that for all $\eps>0$
\begin{align}
 \limsup_{n\to\infty}\bigg(n^{-1+p(\alpha+1/\beta)}\suonetn \|C_{i}^{n,\eps}\|^p_{p}\bigg)
&\leq C\limsup_{n\to\infty}(n^{p(\alpha+1/\beta-k)})=0,
\end{align} 
which together with \eqref{ACon} and \eqref{BCon} completes the proof of \eqref{sum}.

By Minkowski's inequality for $p\geq 1$ and subadditivity for $p<1$, it is now sufficient to show that
\begin{align}\label{SubLimSta}
n^{-1+p(\alpha+1/\beta)}\suonetn |\sigma_{(i-1/n)}\din G|^p\toop m_p\int_0^t|\sigma_s|^p \diff s,
\end{align}
in order to prove Theorem \ref{theorem-sigma} (ii).

Intuitively, replacing $|\din X|$ by $|\sigma_{(i-k)/n}\din G|$ corresponds to freezing the process $(\sigma_t)_{t\in\R}$ over blocks of length $1/n.$ For the prove of \eqref{SubLimSta} we freeze $\sigma$ now over small blocks with block size $1/l$ that does not depend on $n$. This will allow us to apply Theorem \ref{maintheorem} (ii) on every block. Thereafter, \eqref{SubLimSta} follows by letting $l\to\infty.$ For $l>0$ we decompose 
\begin{align*}
&n^{-1+p(\alpha+1/\beta)}\sum_{i=k}^{[nt]} |\sigma_{(i-k)/n}\din G|^p- m_p\int_0^t|\sigma_s|^p \diff s \\
&\hspace{+3em}=n^{-1+p(\alpha+1/\beta)}\bigg(\sum_{i=k}^{[nt]} |\din G|^p\big(|\sigma_{(i-k)/n}|^p-|\sigma_{(j_{l,i}-1)/l}|^p\big)\bigg)\\
&\hspace{4em}+\bigg(n^{-1+p(\alpha+1/\beta)}\sum_{j=1}^{[tl]+1}|\sigma_{(j-1)/l}|^p\bigg(\sum_{i\in I_l(j)}|\din G|^p-m_pl^{-1}\bigg)\bigg)\\
&\hspace{4em}+ \bigg(m_pl^{-1}\sum_{j=1}^{[tl]}|\sigma_{(j-1)/l}|^p-m_p\int_0^t|\sigma_s|^p ds\bigg)\\
&\hspace{3em}=D_{n,l}+E_{n,l} +F_{l}.
\end{align*}
Here, $j_{l,i}$ denotes the index $j\in\{1,...,[tl]+1\}$ such that $(i-k)/n\in ((j-1)/l,j/l]$ and $I_l(j)$ is the set of indices $i$ such that $(i-k)/n\in((j-1)/l,j/l].$
We show that 
\[\lim_{l\to\infty}\limsup_{n\to\infty}\P(|D_{n,l}+E_{n,l} +F_{l}|>\eps)=0\] for any $\eps>0$. 
Note that $F_l\toas 0$ as $l\to\infty$, since the Lebesgue integral of any c\`adl\`ag function exists. 
For every $l\in\N$ we have $\limsup_{n\to\infty}\P(|E_{n,l}|>\eps)=0$ by 
Theorem~\ref{maintheorem}(ii).
For $\lim_{l\to\infty}\limsup_{n\to\infty}\P(|D_{n,l}|>\eps)=0$ we argue as follows. 
Choose some $p'>1$ such that $pp'<\beta$ and let $q'$ be such that $1/p'+1/q'=1.$ We find 
\begin{align*}
\|D_{n,l}\|_1&=\bigg\|n^{-1+p(\alpha+1/\beta)}\bigg(\sum_{i=k}^{[nt]}  |\din G|^p(|\sigma_{(i-k)/n}|^p-|\sigma_{(j_{l,n,i}-1)/l}|^p)\bigg)\bigg\|_1\\
&\leq n^{-1}\sum_{i=k}^{[nt]} \||n^{\alpha+1/\beta}\din G|^p\|_{p'}\||\sigma_{(i-k)/n}|^p-|\sigma_{(j_{l,n,i}-1)/l}|^p\|_{q'}\\
&\leq\bigg(n^{-1}\sum_{i=k}^{[nt]} \|n^{\alpha+1/\beta}\din G\|^{2/p}_{pp'}\bigg)^{1/2}\bigg(n^{-1}\sum_{i=k}^{[nt]} \||\sigma_{(i-k)/n}|^p-|\sigma_{(j_{l,n,i}-1)/l}|^p\|^2_{q'}\bigg)^{1/2}.
\end{align*}
The first factor is bounded by Lemmas \ref{WeaLBetIso} and \ref{StableLIntegrandEstimate}.
For the second factor we can apply Lemma \ref{vineq}, since the process $ (|\sigma_t|^p)_{t\in\R}$ is c\`adl\`ag and bounded on $[-\delta,\infty),$ and conclude that $\lim_{l\to\infty}\limsup_{n\to\infty}\|D_{n,l}\|_1=0.$ This completes the proof of \eqref{SubLimSta}, and hence of Theorem \ref{theorem-sigma} (ii).\qed


\subsection{Proof of Theorem \ref{theorem-sigma} (iii)}


 For the proof of Theorem \ref{theorem-sigma} (iii) we show that under the conditions of the theorem the process $X$ admits a modification with $k$-times differentiable sample paths with $k$-th derivative $F$ as defined in the theorem. Then the result follows by an application of the following stochastic Fubini theorem. We remark that a Fubini theorem for L\'evy integrals of deterministic integrands was derived by similar means in \cite[Theorem 3.1]{BB11}.

\begin{lem}\label{Fub}
Let $f:\R\times\R\times\Omega\to\R$ be a random field that is measurable with respect to the product $\sigma$-algebra $\mathcal B(\R)\otimes \Pi$, where $\Pi$ denotes the $(\mathcal F_t)_{t\in\R}$-predictable $\sigma$-algebra on $\R\times\Omega$. That is, $\Pi$ is the $\sigma$-algebra generated by all sets $A\times (s,t]$, where $s<t$ and $A\in \mathcal F_s$. Let $(L_t)_{t\in\R}$ be a symmetric L\'evy process that has finite first moment. Assume that we have
\begin{align}\label{IntConFub1}
\E\bigg[\int_{\R} \|f(u,\cdot)\|_{1,L}\diff u\bigg]<\infty.
\end{align}
Then, we obtain
\[\int_{\R} \bigg(\int_\R f(u,s) \diff u \bigg)\diff L_s= \int_{\R}\bigg(\int_\R f(u,s)  \diff L_s\bigg)\diff u\quad \text{almost surely},\]
and all the integrals are well-defined.
\end{lem}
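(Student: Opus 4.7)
The plan is to follow the classical functional monotone class strategy for stochastic Fubini theorems, using the Kwapi\'en--Woyczy\'nski functional $\|\cdot\|_{1,L}$ in place of the $L^2$-isometry that is available only in the Brownian case. The argument has three stages: verify well-definedness of both sides, check the identity on a class of elementary predictable random fields, and extend by a monotone class / bounded convergence argument controlled by Lemma~\ref{KwaWoyIso}.

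For well-definedness, assumption \eqref{IntConFub1} and Lemma~\ref{KwaWoyIso} with $q=1$ give $\E|\int_\R f(u,s)\,dL_s|\le C\,\E\|f(u,\cdot)\|_{1,L}$ for a.e.~$u$, so after selecting a jointly measurable predictable version of $u\mapsto\int_\R f(u,s)\,dL_s$ the outer $du$-integral on the right-hand side is a.s.~absolutely convergent. For the left-hand side the key point is the Minkowski-type inequality $\|\int_\R f(u,\cdot)\,du\|_{1,L}\le C\int_\R\|f(u,\cdot)\|_{1,L}\,du$, which I would derive from the Luxemburg norm $\|\cdot\|'_{1,L}$ introduced in Section~\ref{sec2} (for which the usual Minkowski integral inequality is immediate, since $\|\cdot\|'_{1,L}$ is genuinely a norm) via the two-sided equivalence $c\|\cdot\|'_{1,L}\le\|\cdot\|_{1,L}\le C\|\cdot\|'_{1,L}$ established there. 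Combined with \eqref{IntConFub1}, this places $\int_\R f(u,\cdot)\,du$ in $\ldl{1}{L}$ almost surely, so the outer $dL_s$-integral on the LHS is well-defined.

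The identity holds trivially on the elementary class of random fields of the form $f(u,s,\omega)=h(u)\1_A(\omega)\1_{(a,b]}(s)$ with $h\in L^1(\R)\cap L^\infty(\R)$ and $A\in\mathcal F_a$, since both sides reduce to $\bigl(\int_\R h(u)\,du\bigr)\1_A(L_b-L_a)$, and hence on all finite linear combinations of such integrands. To extend to general $f$, I would apply a functional monotone class argument: if $f_n\to f$ boundedly, with $u$-supports contained in a common compact set, and each $f_n$ in the class where the identity is known, then Lemma~\ref{KwaWoyIso} applied to $f_n-f$ gives $\E|\int_\R (f_n-f)(u,s)\,dL_s|\le C\,\E\|f_n(u,\cdot)-f(u,\cdot)\|_{1,L}$, and dominated convergence in $u$ yields $L^1(\P)$-convergence of the outer integral on the RHS; the Minkowski/Luxemburg bound applied to $f_n-f$ combined with a second application of Lemma~\ref{KwaWoyIso} transports this to the LHS, so the identity passes to the limit. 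The monotone class theorem then yields the identity for all bounded, compactly $u$-supported, $\mathcal B(\R)\otimes\Pi$-measurable $f$, and a final truncation $f_N=f\1_{\{|u|\le N,\,|f|\le N\}}$, combined with \eqref{IntConFub1} and dominated convergence, removes these last restrictions.

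I expect the technical heart of the proof to be precisely the Minkowski-type bound for $\|\cdot\|_{1,L}$ applied to a random field and the accompanying measurability check: one must verify that the jointly measurable version of $u\mapsto\int_\R f(u,s)\,dL_s$ produced by ordinary Fubini admits a predictable representative in $s$, and that the integral inequality for the Luxemburg norm $\|\cdot\|'_{1,L}$ is preserved under the two-sided equivalence with $\|\cdot\|_{1,L}$ with no loss beyond a multiplicative constant. Once this bound is in place, the monotone class closure and the truncation step are routine, and the finite first moment hypothesis on $L$ is used only to ensure that the functional $\|\cdot\|_{1,L}$ (built from $\phi_1$) is finite on a sufficiently rich subspace of predictable integrands.
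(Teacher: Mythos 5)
Your proposal follows essentially the same route as the paper: the identity is checked on simple predictable fields and then extended by approximating $f$ in the mixed norm $\E\big[\int_\R\|\cdot\|_{1,L}\,du\big]$, with the right-hand side controlled by Lemma~\ref{KwaWoyIso} and the left-hand side by exactly the Minkowski-type bound $\big\|\int_\R |f(u,\cdot)|\,du\big\|_{1,L}\leq C\int_\R\|f(u,\cdot)\|_{1,L}\,du$, which the paper isolates as Lemma~\ref{NorIntCha} and likewise obtains through the Luxemburg norm $\|\cdot\|'_{1,L}$ (together with the measurable-modification argument via Cohn's theorem that you also flag). The only organisational difference is that the paper invokes the monotone class theorem to produce a sequence of simple fields converging to the given $f$ in this mixed norm (and, in Lemma~\ref{NorIntCha}, works on compact boxes in \emph{both} variables), which sidesteps the issue that your bounded-convergence closure step and final truncation would otherwise also need a localisation in the $s$-variable, since a bounded field need not lie in $\ldl{1}{L}$.
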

The proof of this result relies on the following lemma.

\begin{lem}\label{NorIntCha}
Let $L$ be a L\'evy process that has $q$-th moment for some $q\geq 1$. There exists a $C>0$ such that the following holds. For all random fields $f:\R\times\R\times\Omega\to\R$ that are measurable with respect to the product $\sigma$-algebra $\mathcal B(\R)\otimes \Pi$, where $\Pi$ denotes the $(\mathcal F_t)_{t\in\R}$-predictable $\sigma$-algebra on $\R\times \Omega$,  we have that
\begin{align}\label{NorIntIne}
\bigg\|\int_\R |f(u,\cdot)|\diff u\bigg\|_{q,L}\leq C\int_\R\| f(u,\cdot)\|_{q,L}\diff u \quad \text{almost surely.}
\end{align}
\end{lem}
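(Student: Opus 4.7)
The plan is to reduce \eqref{NorIntIne} to the classical Minkowski integral inequality for a genuine Luxemburg-type norm, and then argue pointwise in $\omega$. Recall from the remark preceding the lemma that on deterministic integrands the functional $\|\cdot\|_{q,L}$ is equivalent (up to universal multiplicative constants $c,C>0$) to the Luxemburg norm
\[
\|F\|'_{q,L}:=\inf\Big\{\lambda>0:\int_{\R^2}\widetilde\phi_q(F_s u/\lambda)\,ds\,\nu(du)\leq 1\Big\},
\]
which, unlike $\|\cdot\|_{q,L}$, is genuinely a norm because $\widetilde\phi_q$ is convex. Since $\widetilde\phi_q$ depends on its argument only through absolute value and since \eqref{NorIntIne} is an a.s.\ statement, it suffices to establish, for $\P$-a.e.\ $\omega$, the deterministic integral Minkowski inequality
\[
\Big\|\int_\R |f(u,\cdot,\omega)|\,du\Big\|'_{q,L}\;\leq\;\int_\R\|f(u,\cdot,\omega)\|'_{q,L}\,du.
\]
Composing with the norm equivalence then delivers \eqref{NorIntIne} with constant $C/c$.

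For the deterministic inequality I would follow the standard Orlicz-space argument. Fix $\omega$, set $M(u):=\|f(u,\cdot,\omega)\|'_{q,L}$ and $M:=\int_\R M(u)\,du$, and assume $M<\infty$ (otherwise there is nothing to prove). After discarding the set $\{u:M(u)=0\}$, on which $f(u,\cdot,\omega)$ vanishes $ds\otimes\nu$-a.e.\ and hence contributes nothing to either side, we may assume $M(u)>0$. Applying Jensen's inequality to the convex function $\widetilde\phi_q$ with the probability measure $d\mu(u)=M(u)/M\,du$ yields, for every $s,x\in\R$,
\[
\widetilde\phi_q\Big(\frac{x}{M}\int_\R |f(u,s,\omega)|\,du\Big)\;\leq\;\int_\R \widetilde\phi_q\Big(\frac{x|f(u,s,\omega)|}{M(u)}\Big)\frac{M(u)}{M}\,du.
\]
Integrating against $ds\,\nu(dx)$, swapping order of integration via Tonelli, and using the defining property $\int_{\R^2}\widetilde\phi_q(x f(u,s,\omega)/M(u))\,ds\,\nu(dx)\leq 1$ of the Luxemburg norm, one obtains $\int_{\R^2}\widetilde\phi_q\big((x/M)\int_\R|f(u,s,\omega)|\,du\big)\,ds\,\nu(dx)\leq 1$, which is exactly the statement $\|\int_\R|f(u,\cdot,\omega)|\,du\|'_{q,L}\leq M$.

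The main obstacle, such as it is, consists of measurability bookkeeping rather than any substantial analytic issue. Joint $\mathcal B(\R)\otimes\mathcal F$-measurability of $(u,\omega)\mapsto\|f(u,\cdot,\omega)\|_{q,L}$, which is needed for the right-hand side of \eqref{NorIntIne} to be a well-defined random variable (and for hypothesis \eqref{IntConFub1} of Lemma~\ref{Fub} to be meaningful), can be deduced by writing the Luxemburg norm as the infimum of $\lambda$ over the rationals combined with Fubini's theorem applied to the jointly measurable map $(u,\omega,\lambda)\mapsto\Phi_{q,L}(f(u,\cdot,\omega)/\lambda)$. The degenerate possibility $\int|f(u,s,\omega)|\,du=\infty$ on a set of positive $ds\,\nu(dx)$-measure is automatically excluded once $M<\infty$, since the Jensen step above forces that integral to be finite $ds\,\nu(dx)$-a.e. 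With these observations in place the argument is complete.
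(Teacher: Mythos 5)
Your argument is correct, but it proceeds along a genuinely different route than the paper. The paper proves \eqref{NorIntIne} by a monotone class argument: it first shows that $(\|f(u,\cdot)\|_{q,L})_{u\in\R}$ admits a measurable modification (via \cite{C72}), then verifies the inequality for simple random fields of the form $\sum_j F^j_s\mathds 1_{A_j}(u)$ using the quasi-triangle inequality \eqref{triangleine}, and finally extends to bounded and then general $f\in\mathcal B(\R)\otimes\Pi$ by the monotone class theorem and an approximation argument, invoking the equivalence with the Luxemburg norm $\|\cdot\|'_{q,L}$ only to pass to the limit. You instead fix $\omega$, use the two-sided pathwise equivalence $c\|\cdot\|'_{q,L}\leq\|\cdot\|_{q,L}\leq C\|\cdot\|'_{q,L}$ (valid $\omega$-wise with $\omega$-free constants since both functionals are computed path by path), and prove the continuous Minkowski inequality for the Luxemburg norm directly via Jensen's inequality with respect to the probability measure $M(u)/M\,du$ together with Tonelli; your measurability bookkeeping, writing the infimum in the Luxemburg norm over rational $\lambda$ and using monotonicity of $\lambda\mapsto\Phi_{q,L}(F/\lambda)$, even yields genuine joint measurability of $(u,\omega)\mapsto\|f(u,\cdot,\omega)\|_{q,L}$ rather than a modification, and your route never uses the $q$-th moment hypothesis (which in the paper's proof is what puts the indicator-based simple integrands into $\ldl{q}{L}$). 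What each approach buys: yours is more self-contained and elementary, avoids the monotone class and modification machinery, and gives the inequality for every $\omega$ with explicit constant $C/c$; the paper's approach reuses the already-established quasi-triangle inequality and sets up exactly the simple-field approximation scheme that is deployed again in the proof of the stochastic Fubini result (Lemma \ref{Fub}), so its machinery does double duty. The only points to make fully precise in your write-up are standard: that the modular is $\leq 1$ at $\lambda=M(u)$ itself (monotone convergence as $\lambda\downarrow M(u)$, or run the argument with weights $M(u)+\eps$ and let $\eps\to0$), and the extended-value Jensen step when $\int|f(u,s)|\,du=\infty$; neither is a gap.
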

\begin{proof}
Let us first remark that the stochastic process $(\| f(u,\cdot)\|_{q,L})_{u\in \R}$ indeed admits a measurable modification by the following argument. For any $g\in \ldl{q}{L}$, the mapping $\Gamma_g:\R\to\R,\ u\mapsto\E[\|f(u,\cdot)-g\|_{q,L}\wedge 1]$
is measurable by product measurebility of $f$. Therefore, the mapping $\R\to \ldl{q}{L}, u\mapsto f(u,\cdot)$ is measurable, where we equip $\ldl{q}{L}$ with the (semi)metric induced by $\E[\|\cdot\|_{q,L}\wedge 1]$.
Since the mapping $\ldl{q}{L}\to L^0(\Omega), g\mapsto \|g\|_{q,L}$ is continuous, the existence of a measurable modification of $(\| f(u,\cdot)\|_{q,L})_{u\in \R}$ follows from \cite[Theorem 3]{C72}.

The proof relies on the monotone class theorem, cf. \cite[Theorem 6.1.3]{Dur}. We fix a compact set $K\subset \R$ and denote by $\mathcal L^0_K$ the linear space of all random fields $f:K\times K\times\Omega\to\R$ that are measurable with respect to $\mathcal B(K)\otimes \Pi$.
 Denote by $\mathcal R_K$ the set of all random fields in $\mathcal L^0_K$ of the form
\[f(u,s)=\sum_{j=1}^l F^j_s \mathds 1_{A_j}(u),\]
where $l\in\N$, and $F^1,\dots,F^l\in \ldl{q}{L}$ are bounded, and $A_1,\dots A_l$ are disjoint sets in $\mathcal B(K).$ Then, \eqref{NorIntIne} holds for all $f\in\mathcal R_K$ with $C$ as in \eqref{triangleine}. 
 Let 
\begin{align*}
\mathcal H_K:=\bigg\{ f\in \mathcal L_K^0\,:\,&\text{ There is a sequence $(f_n)_{n\in\N}\subset\mathcal R_K$ such that }\\
&\int_K \|f(u,\cdot)-f_n(u,\cdot)\|_{q,L}\diff u\to 0,\text{ a.s., and }\\
&\bigg\|\int_K |f(u,\cdot)-f_n(u,\cdot)|\diff u\bigg\|_{q,L}\to 0,\text{ a.s.}\bigg\}. 
\end{align*}
Note that $\mathcal H_K$ is a linear space containing $\mathcal R_K$ that is closed under bounded monotone convergence. Therefore, the monotone class theorem yields that $\mathcal H_K$ contains all bounded functions in $\mathcal L^0_K$. Recalling the equivalence of $\|\cdot\|_{q,L}$ and the (random) norm $\|\cdot\|'_{q,L},$ which has been introduced in Section \ref{sec2}, it is easy to see that all $f\in \mathcal H_K$ satisfy \eqref{NorIntIne}, with the constant $C$ not depending on $K$. Now, the lemma follows for general $f$ by an approximation argument.
\end{proof}

With this result at hand we can now prove Lemma \ref{Fub}. 

\begin{proof}[Proof of Lemma \ref{Fub}]
Let us first remark that the stochastic process $u\mapsto \int_\R f(u,s)  \diff L_s$ admits a measurable modification by measurability of the map $\R\to\ldl{1}{L},\ u\mapsto  f(u,\cdot)$ and continuity of the integral mapping $\ldl{1}{L}\to L^1(\Omega),\ f(u,\cdot)\mapsto \int_\R f(u,s)\diff L_s$, which follows from Lemma \ref{KwaWoyIso}. Then, the existence of a measurable modification follows from \cite[Theorem 3]{C72}.

The result obviously holds for $f$ of the form
\begin{align}\label{SimFunFub}
f(u,s,\omega)=\sum_{i=1}^k \alpha_i\mathds 1_{A_i}(u)\mathds 1_{B_i}(s,\omega),
\end{align}
where $\alpha_i\in\R,$ $A_i\in\mathcal B(\R)$ and $B_i\in\Pi$ for $i=1,\dots, k$.
By an application of the monotone class theorem we see that for every $\mathcal B(\R)\otimes\Pi$ measurable $f$ satisfying \eqref{IntConFub1} we can find a sequence $f_n$ of the form \eqref{SimFunFub} such that $\E[\int_\R\|f_n(u,s)-f(u,s)\|_{L,1}\diff u]\to 0.$
Letting 
\[Y:=\int_{\R}\bigg(\int_\R f(u,s)  \diff L_s\bigg)\diff u,\quad \text{and}\quad Y_n:=\bigg[\int_{\R}\bigg(\int_\R f_n(u,s) \diff L_s\bigg)\diff u\bigg],\]
we have by \eqref{IntConFub1} and Lemma \ref{KwaWoyIso}
\[\E[|Y|]\leq C\int_{\R} \E[\|f(u,\cdot)\|_{1,L}]\diff u<\infty.\]
Thus, we deduce that
\[\E[|Y-Y_n|]\leq C\E\bigg[\int_\R \|f(u,\cdot)-f_n(u,\cdot)\|_{1,L}\diff u\bigg]\to 0,\quad\text{as $n\to\infty$.}\]
By applying Lemma \ref{NorIntCha} we obtain that 
\[Z:=\int_{\R}\bigg(\int_\R f(u,s) \diff u\bigg) \diff L_s\quad\text{and}\quad Z_n=\int_{\R}\bigg(\int_\R f_n(u,s)\diff u\bigg) \diff L_s\]
are in $L^1(\Omega),$ and that $\E[|Z_n-Z|]\to 0,$ as $n\to\infty.$
Now the result follows since $Y_n=Z_n$ almost surely.
\end{proof}

In order to apply Fubini's theorem in the proof of Theorem \ref{theorem-sigma} (iii) we need the following lemma.

\begin{lem}\label{FubCon}
Suppose that assumption (B1) holds and let $\alpha>k-1/( \beta\vee 1)$. For any $t>0$ the random field  $f_{t}(u,s):=g^{(k)}(u-s)\sigma_{s-}\mathds 1_{[0,t]}(u)\mathds 1_{(-\infty,u)}(s)$ satisfies the conditions of Lemma \ref{Fub}.
\end{lem}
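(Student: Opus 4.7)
The plan is to verify the two hypotheses of Lemma \ref{Fub} for $f=f_t$, namely predictable measurability and the integrability condition \eqref{IntConFub1}. Measurability is routine: $g^{(k)}$ is continuous on $(0,\infty)$, so $g^{(k)}(u-s)\mathds 1_{(-\infty,u)}(s)$ is jointly Borel in $(u,s)$, and $\sigma_{s-}$ is predictable as the left limit of a c\`adl\`ag adapted process; hence $f_t$ is $\mathcal B(\R)\otimes\Pi$-measurable. Moreover, the hypothesis $\theta>1$ of Theorem \ref{theorem-sigma}(iii) forces $\int_{|x|>1}|x|\,\nu(dx)<\infty$, so $L$ has a finite first moment as required by Lemma \ref{Fub}.

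For the integrability $\E[\int_\R \|f_t(u,\cdot)\|_{1,L}\,du]<\infty$ I would split $f_t(u,\cdot)=f_t^{(A)}(u,\cdot)+f_t^{(B)}(u,\cdot)+f_t^{(C)}(u,\cdot)$ according to whether $s\in(u-\delta,u)$, $s\in(-\delta,u-\delta]$, or $s\in(-\infty,-\delta]$, and use the quasi-triangle inequality \eqref{triangleine} to reduce the problem to bounding each piece separately. The key pointwise tool is the estimate
\[ \int_\R \phi_1(yx)\,\nu(dx)\leq C\bigl(|y|^\rho\mathds 1_{|y|\leq 1}+|y|^{\beta'\vee 1}\mathds 1_{|y|>1}\bigr),\qquad y\in\R,\]
obtained by combining Lemma \ref{PhiQGro} (for the $|x|\leq 1$ part) with the bound $\int_{|x|>1}|yx|\,\nu(dx)\leq C|y|$ coming from the finite first moment, and by using $|y|\leq |y|^\rho$ on $\{|y|\leq 1\}$ (since $\rho\leq 1$). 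Combined with \eqref{NormVsPhiIneq}, this allows one to control each $\|f_t^{(\cdot)}(u,\cdot)\|_{1,L}$ via the corresponding $\Phi_{1,L}$.

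For the tail piece $f_t^{(C)}$, the monotonicity of $y\mapsto\phi_1(yx)$ in $|y|$ and the fact that $|g^{(k)}|$ is decreasing on $(\delta,\infty)$ yield $|g^{(k)}(u-s)\sigma_{s-}|\leq |H_s|$ for all $u\geq 0$ and $s\leq-\delta$, giving a uniform-in-$u$ bound of $\Phi_{1,L}(f_t^{(C)}(u,\cdot))$ by $C\int_\R (|H_s|^\rho\vee|H_s|^{\beta'\vee 1})\,ds$, whose expectation is finite by (B1). The middle piece $f_t^{(B)}$ is supported on a bounded $s$-interval where both $g^{(k)}$ (by (A)) and $\sigma$ (by the localization of Remark \ref{eqreassdfsfd}) are bounded, so $\Phi_{1,L}(f_t^{(B)}(u,\cdot))$ is deterministically bounded. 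For the singular piece $f_t^{(A)}$, the substitution $r=u-s$ together with $|g^{(k)}(r)|\leq Cr^{\alpha-k}$ from (A) reduces matters to the integrability of $r^{(\alpha-k)(\beta'\vee 1)}$ near $r=0$, which holds iff $\alpha>k-1/(\beta'\vee 1)$; the hypothesis $\alpha>k-1/(\beta\vee 1)$ together with the freedom to take $\beta'>\beta$ close to $\beta\vee 1$ in (B1) delivers exactly this. The delicate step is precisely this matching between $\alpha$, $\beta'$, and $\rho$ on the singular piece; once all three pieces are bounded, integrating in $u\in[0,t]$ and taking expectation yields \eqref{IntConFub1} and Lemma \ref{Fub} applies.
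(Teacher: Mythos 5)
Your proposal is correct and follows essentially the paper's own route: reduce \eqref{IntConFub1} to bounds on $\Phi_{1,L}$ via \eqref{NormVsPhiIneq}, split the $s$-domain at $-\delta$ (your extra split of $(-\delta,u)$ at $u-\delta$ is only cosmetic), control the L\'evy integral by \eqref{Phi_0Est}/Lemma~\ref{PhiQGro} together with the finite first moment of the big jumps, use monotonicity of $|g^{(k)}|$ on $(\delta,\infty)$ plus (B1) for the tail, and use $|g^{(k)}(r)|\leq Cr^{\alpha-k}$ with an exponent $\beta'>\beta$ chosen so that $(\alpha-k)(\beta'\vee 1)>-1$ for the singular part. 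One small attribution slip: the freedom to take $\beta'$ close to $\beta$ lives in the deterministic estimates (Lemma~\ref{PhiQGro}, \eqref{Phi_0Est}), not in (B1) — on the singular piece (B1) is not needed at all after the localisation of Remark~\ref{eqreassdfsfd}, and on the tail piece the $\beta'$ furnished by (B1) suffices without any smallness; this is harmless since the integrability in (B1) transfers to any exponent in $(\beta,\beta']$.
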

\begin{proof}
The measurability with respect to $\mathcal B(\R)\otimes \Pi$ is obvious.
By \eqref{NormVsPhiIneq} for the function $f_{t}$ to satisfy \eqref{IntConFub1} it is sufficient to show that
 \begin{align*}
 \int_0^t \E[\Phi_{1,L}(f_t(u,\cdot))]\diff u
&= \int_0^t \E\big[\Phi_{1,L}\big(f_t(u,\cdot)\mathds 1_{(-\infty, -\delta]}\big)\diff u+\int_0^t\E\big[\Phi_{1,L}\big(f_t(u,\cdot)\mathds 1_{ (-\delta,t]}\big)\big]\diff u\\
&:= I_1+I_2<\infty.
\end{align*}
We first show that $I_1<\infty.$ Since $|g^{(k)}|$ is decreasing on $(\delta,\infty)$, we have for all $u\in[0,t]$
\begin{align*}
\Phi_{1,L}\big(f_t(u,\cdot)\mathds 1_{(-\infty,-\delta]}\big)&\leq \Phi_{1,L}\big(f_t(0,\cdot)\mathds 1_{(-\infty,-\delta]}\big)\\
&=\int_{-\infty}^{-\delta}\int_\R |f_t(0,s)x|^2 \mathds 1_{\{|f_t(0,s)x|\leq 1\}}\ \nu( d x)\diff s\\
&\hspace{1em}+ \int_{-\infty}^{-\delta}\int_\R |f_t(0,s)x| \mathds 1_{\{|f_t(0,s)x|> 1\}}\ \nu( d x)\diff s\\
&=J_1+J_2.
\end{align*}
By \eqref{Phi_0Est} we obtain for any $\beta'>\beta$
\begin{align}\label{J_1}
J_1\leq C \int_{-\infty}^{-\delta} |f_t(0,s)|^\theta\mathds 1_{\{|f_t(0,s)|\leq 1\}}+|f_t(0,s)|^{\beta'}\mathds 1_{\{|f_t(0,s)|> 1\}}\diff s.\end{align}
For $\beta'$ close enough to $\beta$, the right hand side of \eqref{J_1} is in $L^1(\Omega)$ by assumption (B1).
For $J_2$ we observe for all $\beta'>\beta$ with $\beta'\geq 1$ that
\begin{align}\label{J_2Est}
J_2 &\leq \int_{-\infty}^{-\delta}\int_{-1}^1 |f_t(0,s)x|^{\beta'} \mathds 1_{\{|f_t(0,s)x|> 1\}}\ \nu( d x)\diff s +2
 \int_{-\infty}^{-\delta}|f_t(0,s)|\diff s \int_{1}^\infty |x|\ \nu( d x)\nonumber\\
 &\leq C \int_{-\infty}^{-\delta}|f_t(0,s)|^{\beta'}\mathds 1_{\{|f_t(0,s)|> 1\}}+|f_t(0,s)|\diff s,
\end{align}
which is in $L^1(\Omega)$ by assumption (B1) for sufficiently small $\beta'>\beta$. Thus, we conclude that $I_1<\infty.$

Now we show that $I_2<\infty.$ By boundedness of $\sigma$ on the interval $[-\delta,t],$ it holds for all $u\in[0,t]$ that
\begin{align*}
&\Phi_{1,L}\big(f_t(u,\cdot)\mathds 1_{ (-\delta,t]}\big)\\
&\leq C \int_{-\delta}^t \int_\R |g^{(k)}(t-s)x|\mathds 1_{\{|g^{(k)}(t-s)x|>1\}}+|g^{(k)}(t-s)x|^2\mathds 1_{\{|g^{(k)}(t-s)x|\leq 1\}}\nu( d x)\diff s.
\end{align*}
We recall that $|g^{(k)}(s)|\leq Cs^{\alpha-k}$ for $s\in(0, \delta].$ Thus, choosing $\beta'>\beta$ such that $(\alpha-k)\beta'>-1$, we have by \eqref{Phi_0Est}
\[\int_{-\delta}^t \int_\R |g^{(k)}(t-s)x|^2\mathds 1_{\{|g^{(k)}(t-s)x|\leq 1\}}\nu( d x)\diff s\leq C +\int_{-\delta}^t  |g^{(k)}(t-s)|^{\beta'}\diff s<\infty.\]
Moreover, by arguing as in \eqref{J_2Est}
\begin{align*}
 &\int_{-\delta}^t \int_\R |g^{(k)}(t-s)x|\mathds 1_{\{|g^{(k)}(t-s)x|>1\}}\nu( d x)\diff s\\
 &\leq C\int_{-\delta}^t  |g^{(k)}(t-s)|^{\beta'}\mathds 1_{\{|g^{(k)}(t-s)|>1\}}+ |g^{(k)}(t-s)|\diff s<\infty,
\end{align*}
where we used that $\alpha-k>-1.$
Therefore, we conclude that $I_2<\infty$, which completes the proof.
\end{proof}

In the proof of Theorem \ref{theorem-sigma} (iii) we will again separate the large jumps and small jumps of the L\'evy process $L.$ For the small jump part the result given in Lemma \ref{FubCon} can be strengthened as follows. Note that for this result it is in fact necessary to cut off the large jumps of $L$, since $L$ might not have $p$-th moment.

\begin{lem}\label{SmaJumLpEst}
Let $t>0$ be fixed, and let $p\geq 1$. Suppose, (B1) is satisfied and suppose that $\alpha>k-1/( \beta\vee p)$. For any $a\in(0,1]$, the random field $f_t(u,s)=g^{(k)}(u-s)\sigma_{s-}\mathds 1_{[0,t]}(u)\mathds 1_{(-\infty,u)}(s)$ satisfies
\[\int_0^t\E[\|f_t(u,\cdot)\|^p_{p,L^\sj}]\diff u<\infty,\]
where process $(L^\sj_s)_{s\in\R}$ is defined in Lemma \ref{SmaJumCon}.
\end{lem}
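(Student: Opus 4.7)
The plan is to estimate $\|f_t(u,\cdot)\|^p_{p,L^\sj}$ uniformly in $u\in[0,t]$ and then integrate. Apply the bound $\|F\|^p_{p,L^\sj}\leq \Phi_{p,L^\sj}(F)^{p/2}\vee \Phi_{p,L^\sj}(F)$ from \eqref{NormVsPhiIneq}, and split $\Phi_{p,L^\sj}(f_t(u,\cdot))=J_1(u)+J_2(u)+J_3(u)$ according to the three regions $s\in(-\infty,-\delta]$, $s\in(-\delta,u-\delta]$, and $s\in(u-\delta,u)$ (the middle piece being empty when $u=0$). In each region Lemma~\ref{PhiQGro}, applied with $q=p$ and a region-specific $\beta'>\beta$, reduces the $\nu$-integration to a Lebesgue estimate of $|f_t(u,s)|^2\1_{\{|f_t(u,s)|\leq 1\}}+|f_t(u,s)|^{\beta'\vee p}\1_{\{|f_t(u,s)|>1\}}$; the crucial point is that the constant in Lemma~\ref{PhiQGro} is independent of $a$ and that $\beta'$ may be chosen freely and separately in each region.

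For $J_1$, monotonicity of $|g^{(k)}|$ on $(\delta,\infty)$ together with $u\geq 0$ gives $|f_t(u,s)|\leq |H_s|$ for $s\leq-\delta$. Taking $\beta'$ equal to the constant from assumption~(B1) and bounding $|H_s|^2\1_{\{|H_s|\leq 1\}}\leq |H_s|^\rho\1_{\{|H_s|\leq 1\}}$ (valid since $\rho\leq 1\leq 2$) yields $J_1(u)\leq CY$, where $Y:=\int_{-\infty}^{-\delta}(|H_s|^\rho\vee|H_s|^{\beta'})\,ds$ lies in $L^{1\vee p/2}(\Omega)$ by~(B1). For $J_2$, the quantity $|f_t(u,s)|$ is bounded by the deterministic constant $|g^{(k)}(\delta)|\cdot\sup_{[-\delta,t]}|\sigma|$ (using monotonicity of $|g^{(k)}|$ and the boundedness of $\sigma$ on $[-\delta,\infty)$ from Remark~\ref{eqreassdfsfd}), so $J_2(u)\leq C$ uniformly in $u$.

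The critical bound is on $J_3$, where $|g^{(k)}(u-s)|\leq C(u-s)^{\alpha-k}$ blows up as $s\uparrow u$. Here one would choose $\beta'$ with $\beta<\beta'$ and $\beta'\vee p<1/(k-\alpha)$, which is possible precisely because $\alpha>k-1/(\beta\vee p)$. The contribution from $\{|f_t(u,s)|\leq 1\}$ is trivially bounded after integration by the length $\delta$, while on $\{|f_t(u,s)|>1\}$ the integrand is dominated by $C(u-s)^{(\alpha-k)(\beta'\vee p)}$ with exponent strictly greater than $-1$, hence integrable on $(u-\delta,u)$; one concludes $J_3(u)\leq C$ uniformly in $u$. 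Combining the three bounds gives $\Phi_{p,L^\sj}(f_t(u,\cdot))\leq C+CY$, whence $\|f_t(u,\cdot)\|^p_{p,L^\sj}\leq C(1+Y^{p/2}+Y)$. Taking expectation, using $\E[Y^{1\vee p/2}]<\infty$ and the elementary inequality $Y^r\leq 1+Y^{1\vee p/2}$ for $r\leq 1\vee p/2$, and integrating over $u\in[0,t]$ completes the proof. The main obstacle is the sharp integrability in $J_3$: it is exactly at this step that the freedom to pick $\beta'$ arbitrarily close to $\beta$ in Lemma~\ref{PhiQGro} is essential for exploiting the sharp hypothesis $\alpha>k-1/(\beta\vee p)$.
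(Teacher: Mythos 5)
Your proposal is correct and takes essentially the same approach as the paper's proof: both bound $\|f_t(u,\cdot)\|^p_{p,L^\sj}$ via \eqref{NormVsPhiIneq}, apply Lemma~\ref{PhiQGro} with a $\beta'>\beta$ chosen so that $(\alpha-k)(\beta'\vee p)>-1$, use boundedness of $\sigma$ on $[-\delta,\infty)$ for the recent past, and invoke (B1) together with monotonicity of $|g^{(k)}|$ on $(\delta,\infty)$ for the region $s\le-\delta$. The only cosmetic difference is that you split $(-\delta,u)$ into two further pieces and bound uniformly in $u$, whereas the paper reduces these contributions to the cases $u=t$ and $u=0$ via shift invariance of $\Phi_{p,L^\sj}$ and monotonicity.
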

\begin{proof}
We decompose
\begin{align*}
\int_0^t\E[\|f_t(u,\cdot)\|^p_{p,L^\sj}]\diff u&\leq C\int_0^t\E[\|f^{(1)}_t(u,\cdot)\|^p_{p,L^\sj}]\diff u+C\int_0^t\E[\|f^{(2)}_t(u,\cdot)\|^p_{p,L^\sj}]\diff u\\
&= I_1+I_2,
\end{align*}
where 
\[f^{(1)}_t(u,s)=g^{(k)}(u-s)\sigma_{s-}\mathds 1_{[0,t]}(u)\mathds 1_{(-\delta,u)}(s),\quad f^{(2)}_t(u,s)=g^{(k)}(u-s)\sigma_{s-}\mathds 1_{[0,t]}(u)\mathds 1_{(-\infty,-\delta]}(s).\]
For $I_1$ we use that $\sigma$ is bounded on $[-\delta,\infty).$ Thus, denoting $e_t(u,s)=g^{(k)}(u-s)\mathds 1_{[0,t]}(u)\mathds 1_{(-\delta,u)}(s),$ we have that
\begin{align*}
I_1&\leq C \int_0^t \| e_t(u,\cdot)\|^p_{p,L^\sj} \diff u\\
&\leq C \int_0^t \Phi_{p,L^\sj}( e_t(u,\cdot))+\Phi^{\frac p 2}_{p,L^\sj}( e_t(u,\cdot)) \diff u\\
&\leq Ct \big(\Phi_{p,L^\sj}( e_t(t,\cdot)) +\Phi^{\frac p 2}_{p,L^\sj}( e_t(t,\cdot))\big),
\end{align*}
where the second inequality follows from \eqref{NormVsPhiIneq},  the third inequality holds since $|e_t(u,s)|\leq |e_t(t,s+t-u)|$, and since $\Phi_{p,L^\sj}(f)$ is invariant under shifting the argument of the function $f$. We obtain by virtue of Lemma \ref{PhiQGro}
\begin{align*}
\Phi_{p,L^\sj}( e_t(t,\cdot))\leq C\int_{-\delta}^t |g^{(k)}(t-s)|^2\mathds 1_{\{|g^{(k)}(t-s)|\leq 1\}}+|g^{(k)}(t-s)|^{\beta'\vee p}\mathds 1_{\{|g^{(k)}(t-s)|> 1\}}\diff s,
\end{align*}
where we choose $\beta'>\beta$ such that $(\alpha-k)(\beta'\vee p)>-1$.
Now, recalling that $|g^{(k)}(t)|\leq Ct^{\alpha-k}$ for all $t\in(0,\delta)$, we see that
\[\int_{-\delta}^t |g^{(k)}(t-s)|^{\beta'\vee p}\mathds 1_{\{|g^{(k)}(t-s)|> 1\}}\diff s<\infty.\]
Consequently, $\Phi_{p,L^\sj}( e_t(t,\cdot))$ is finite which implies $I_1<\infty.$

For $I_2$ we obtain that
 $\E[\|f^{(2)}_t(u,\cdot)\|^p_{p,L^\sj}]\leq \E[\|f^{(2)}_t(0,\cdot)\|^p_{p,L^\sj}]$ for all $u\in [0,t],$ because $|g^{(k)}|$ is decreasing on $(\delta,\infty).$ By \eqref{NormVsPhiIneq} it holds that 
\[\E[\|f^{(2)}_t(0,\cdot)\|^p_{p,L^\sj}]\leq \E\bigg[\Phi_{p,L^\sj}\big(f^{(2)}_t(0,\cdot)\big)\bigg]+\E\bigg[\Phi^{p/2}_{p,L^\sj}\big(f^{(2)}_t(0,\cdot)\big)\bigg],\]
which is finite by Lemma \ref{PhiQGro} and assumption (B1).
\end{proof}

With these preliminaries at hand, we can finally prove Theorem \ref{theorem-sigma} (iii). As we remarked at the beginning of Subsection \ref{stableproof}, it is sufficient to show convergence in probability for a fixed $t>0$ in order to obtain uniform convergence on compacts in probability. Therefore, the theorem is an immediate consequence of the following lemma and Lemma 4.3 in \cite{BLP}.

\begin{lem} 
Under the conditions of Theorem \ref{theorem-sigma} (iii), there is a process $(Z_t)_{t\geq 0}$ that satisfies almost surely $V(Z,p;k)_t^n=V(X,p;k)_t^n$ for all $n\in\N$ and $t\geq 0$, and has the following properties. The sample paths of $Z$ are with probability 1 $k$-times differentiable and it holds for Lebesgue almost all $t\geq 0$ that
\[\frac{\partial^k Z_t}{(\partial t)^k}=\int_{-\infty}^t g^{(k)} (t-s)\sigma_{s-}\diff L_s.\]
Moreover, for any $t_0>0$ it holds with probability 1 that
\[\frac{\partial^k Z_t}{(\partial t)^k}\in L^p([0,t_0]).\]
\end{lem}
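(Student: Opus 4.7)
The plan is to construct $Z$ explicitly as the sum of a random polynomial of degree at most $k-1$ and a $k$-fold iterated Riemann antiderivative of $F$. Setting $F_u:=\int_{-\infty}^u g^{(k)}(u-s)\sigma_{s-}\,dL_s$, the candidate is
\[
Z_t := P(t) - C + \frac{1}{(k-1)!}\int_0^t (t-u)^{k-1} F_u\,du,
\]
with $P(t)=\sum_{j=0}^{k-1}(t^j/j!)\eta_j$, $\eta_j=\int_{-\infty}^0 g^{(j)}(-s)\sigma_{s-}\,dL_s$ and $C=\int_{-\infty}^0 g_0(-s)\sigma_{s-}\,dL_s$. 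Once the identity $Z_t=X_t$ is established almost surely for each fixed $t\ge 0$, the statement $V(Z,p;k)^n_t=V(X,p;k)^n_t$ for all $n,t$ follows by a union bound over the countable collection $(i,n)\in\N^2$. The $k$-times differentiability of the sample paths of $Z$ and the identification $Z^{(k)}_t=F_t$ for Lebesgue-almost all $t$ are then immediate: the polynomial part is smooth, and the Lebesgue differentiation theorem applied to the locally integrable $F$ gives the $k$-th derivative of the iterated antiderivative.

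The first technical step is to verify that $F\in L^p([0,t_0])$ almost surely. By Lemma~\ref{KwaWoyIso}, estimate~\eqref{NormVsPhiIneq}, Lemma~\ref{PhiQGro} and assumption~(B1), one controls $\int_0^{t_0}\E[|F_t|^p]\,dt$; to avoid the possibly missing first moment of $L$ needed for the application of Lemma~\ref{Fub} below, I would decompose $L=L^{\leq a}+L^{>a}$ as in Lemma~\ref{SmaJumCon}, treating the small-jump contribution by arguments parallel to Lemma~\ref{SmaJumLpEst} and the compound-Poisson part pathwise in the spirit of Lemma~\ref{poiintbou}.

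The central step is the stochastic Fubini identity
\[
\frac{1}{(k-1)!}\int_0^t(t-u)^{k-1}F_u\,du = \int_{-\infty}^t\left(\int_{s\vee 0}^t\frac{(t-u)^{k-1}}{(k-1)!}g^{(k)}(u-s)\,du\right)\sigma_{s-}\,dL_s,
\]
obtained by applying Lemma~\ref{Fub} to the small-jump part $L^{\leq a}$ (with integrability verified via a slight modification of Lemma~\ref{FubCon} incorporating the bounded factor $(t-u)^{k-1}$) and classical Fubini pathwise to the compound Poisson part $L^{>a}$. Taylor's formula with integral remainder evaluates the inner deterministic integral as $g(t-s)-\sum_{j=0}^{k-1}(t^j/j!)g^{(j)}(-s)$ for $s<0$ and as $g(t-s)-\sum_{j=0}^{k-1}((t-s)^j/j!)g^{(j)}(0+)$ for $s\in(0,t)$; in the setting of Theorem~\ref{theorem-sigma}(iii) one has $g^{(j)}(0+)=0$ for $0\le j\le k-1$, so the second sum vanishes. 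Comparing with the defining representation $X_t=\int_{-\infty}^0(g(t-s)-g_0(-s))\sigma_{s-}\,dL_s+\int_0^t g(t-s)\sigma_{s-}\,dL_s$ then yields $X_t=Z_t$ a.s.

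The main obstacle is ensuring that the individual L\'evy integrals $\eta_j$ for $1\le j\le k-1$ and $C$ are well-defined under (B1), which only directly controls $g^{(k)}$. Monotonicity and $L^\theta$-integrability of $|g^{(k)}|$ on $(\delta,\infty)$ force each intermediate derivative $|g^{(j)}|$, $0\le j<k$, to be bounded on that tail, so the tail integrability required of the integrands $g^{(j)}(-\cdot)\sigma_{\cdot-}\1_{(-\infty,-\delta]}$ carries over from (B1); integrability on $[-\delta,0]$ follows from the bound $|g^{(k)}(t)|\leq Ct^{\alpha-k}$ combined with $\alpha>k-1$ through successive integration, which keeps $g^{(j)}$ bounded near the origin. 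With these estimates, the Fubini identity, and the $L^p$ control on $F$ all in place, the construction of $Z$ is complete.
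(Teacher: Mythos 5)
Your overall architecture (a $k$-fold antiderivative of $F$, a stochastic Fubini identity applied separately to the small-jump part and the compound Poisson part, the $L^p$ control of $F$ via analogues of Lemma~\ref{SmaJumLpEst} and Lemma~\ref{poiintbou}, and a union bound over the countably many increments) is essentially the paper's, which treats $k=1$ with $Z_t=\int_0^t F_u\,du$ and indicates the general case is similar. The genuine gap lies in your polynomial correction: you build $Z$ out of the random variables $\eta_j=\int_{-\infty}^0 g^{(j)}(-s)\sigma_{s-}\,dL_s$, $0\le j\le k-1$, and $C=\int_{-\infty}^0 g_0(-s)\sigma_{s-}\,dL_s$, and these stochastic integrals need not exist under the standing assumptions. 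Assumption (B1) only constrains $H_s=g^{(k)}(-s)\sigma_s\1_{(-\infty,-\delta]}(s)$, and the localisation of Remark~\ref{eqreassdfsfd} bounds $\sigma$ only on $[-\delta,\infty)$, so no integrability of $g^{(j)}(-\cdot)\sigma_{\cdot-}$ on $(-\infty,-\delta]$ for $j<k$ ``carries over'' from (B1). Moreover, boundedness of $|g^{(j)}|$ on the tail (which for intermediate $j$ you assert without proof, and which fails for $j=0$) is in any case insufficient: integrability with respect to $L$ over an infinite horizon requires decay, not boundedness. Concretely, in the stationary-increments case $g=g_0=c_0\,t_+^{\alpha}$, which is admissible in Theorem~\ref{theorem-sigma}(iii), the function $g$ is unbounded at infinity and both $\eta_0$ and $C$ diverge, while $X$ itself is well defined only because the combination $g(t-s)-g_0(-s)$ decays; note also that for $\theta>1$ (imposed in case (iii)) the tail bound $|g'(t)|\le Ct^{-1/\theta}$ permits $g$ to grow, so even your boundedness claim for $j=0$ breaks down. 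Hence your candidate $Z$ is not well defined in general.

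The defect is repairable without touching your analytic core, precisely by not separating the compensating terms: take $Z_t=\frac{1}{(k-1)!}\int_0^t(t-u)^{k-1}F_u\,du$ and use your Fubini/Taylor computation (with $g^{(j)}(0+)=0$, which indeed follows from (A) and $\alpha>k-1$) to identify $W_t:=X_t-Z_t=\int_{-\infty}^0\bigl(\sum_{j=0}^{k-1}\tfrac{t^j}{j!}g^{(j)}(-s)-g_0(-s)\bigr)\sigma_{s-}\,dL_s$ as a \emph{single} stochastic integral, which exists as the difference of two existing integrals. For each fixed $(i,n)$ the $k$-th order difference in $t$ of this integrand vanishes identically in $s$, so by linearity over the finitely many time points $\Delta_{i,k}^n W=0$ almost surely, and a union bound over $(i,n)$ gives $V(Z,p;k)^n_t=V(X,p;k)^n_t$; this is exactly how the paper's choice of $Z$ sidesteps the existence of $\eta_j$ and $C$. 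Your remaining steps (the $L^p$ membership of $F$ via the small/large jump split and the differentiation of the iterated antiderivative) are sound and match the paper's treatment.
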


\begin{proof}
For ease of notation we consider the case $k=1.$ The general case follows by similar arguments.
The strategy of the proof is the following. We let $a\in(0,1]$ and define the processes $(F^\sj_u)_{u\in\R}$ and $(F^\lj_u)_{u\in\R}$ by
\begin{align*}
F^\sj_u&=\int_{-\infty}^u g'(u-s)\sigma_{s-}\diff L^\sj_s,\quad\text{and}\quad
F^\lj_u&=\sum_{s\in(-\infty, u)}g'(u-s)\sigma_{s-} \Delta L_s \mathds 1_{\{|\Delta L_s|>a\}},
\end{align*}
where the process $(L^\sj_t)_{t \in \mathbb R}$ has been introduced in Section \ref{SmaJum}.
We show that both processes $F^\sj_u$ and $F^\lj_u$ are well-defined and that they both admit a modification with sample paths in $L^p([0,t]).$ Then, we define the process
\[Z_t:=\int_0^t (F_u^\sj+F_u^\lj)\diff u,\]
and show that it satisfies the properties given in the lemma. 

We begin by analysing $F^\sj_u.$ The process $f_{t_0}(u,s)=g'(u-s)\sigma_{s-}\mathds 1_{[0, t_0]}(u)\mathds 1_{(-\infty,u)}(s)$ is integrable in $s$ with respect to $L^\sj$ for Lebesgue almost all $u$, according to  Lemma \ref{SmaJumLpEst}.  
As we argued in Lemma \ref{Fub}, $(F_u^\sj)_{u\geq 0}$ admits a modification with measurable sample paths. Moreover, applying Lemmas \ref{KwaWoyIso} and \ref{SmaJumLpEst} we obtain $F^\sj\in L^p([0,t])$, almost surely, since
\[\E\bigg[\int_{0}^{t}|F^\sj_u|^p\diff u\bigg]\leq C\int_0^{t}\E[\|f_{t}(u,\cdot)\|^p_{p,L^\sj}]\diff u<\infty.\]
For the process $F^\lj_u$ we make the decomposition 
\begin{align*}
F^{\lj}_u	&=F^{\slj}_u+F^{\llj}_u\\
			&=\sum_{s\in(-\infty, -\delta]}g'(u-s)\sigma_{s-} \Delta L_s \mathds 1_{\{|\Delta L_s|>a\}}+\sum_{s\in(-\delta, 
			u)}g'(u-s)\sigma_{s-} \Delta L_s \mathds 1_{\{|\Delta L_s|>a\}}.
\end{align*}
We argue first that $F^{\slj}$ is well-defined and in $L^p([0,t])$ almost surely. Applying Lemma~\ref{poiintbou} we obtain that
\begin{align}\label{poiintest}
\sum_{s\in(-\infty, -\delta]}|g'(-s)\sigma_{s-} \Delta L_s| \mathds 1_{\{|\Delta L_s|>a\}}<\infty
\end{align}
almost surely. Since $g'$ is decreasing on $[\delta,\infty),$ we conclude that the sum in the definition of $F_u^{\slj}$ is absolutely convergent and bounded by the left hand side of \eqref{poiintest}, which does not depend on $u.$ It follows that $F^{\slj}$ is well-defined and in $L^p([0,t])$ almost surely.

 For $F^{\llj}_u$ we use that $L$ has only finitely many jumps of size $>a$ on $[-\delta,t]$. Therefore, $F^{\llj}$ is well-defined and we find a positive random variable $K<\infty$ such that
\begin{align*}
\int_0^{t}|F_u^{\llj}|^p\diff u
	&=\int_0^{t} \bigg|\sum_{s\in(-\delta, u)}g'(u-s)\sigma_{s-} \Delta L_s \mathds 1_{\{|\Delta L_s|>a\}}\bigg|^p\diff u\\
	&\leq K\int_0^{t}\sum_{s\in(-\delta, u)}\big|g'(u-s)\sigma_{s-} \Delta L_s \mathds 1_{\{|\Delta L_s|>a\}}\big|^p\diff u\\
	&\leq K\sum_{s\in(-\delta, {t})}\big|\sigma_{s-} \Delta L_s \mathds 1_{\{|\Delta L_s|>a\}}\big|^p\int_0^{t}|g'(u-s)|^p\diff u\\
	&<\infty.
\end{align*}
Here, the last inequality follows by $|g'(s)|\leq Cs^{\alpha-1}$ for $s\in (0, \delta)$ and $(\alpha-1)p>-1.$

Define the process $(Z_t)_{t\geq 0}$ by
\[Z_t:=\int_0^t (F^\sj_u+F^\lj_u)\diff u.\]
All that remains to show is that $V(X,p;1)^n_{t}=V(Z,p;1)^n_{t}$ for all $n\in \N$ and all $t>0$ with probability 1. For any $t>0$ it holds with probability 1 that
\begin{align*}
X_t-X_0=\int_\R \bigg(\int_\R f_t(u,s)\diff u\bigg) \diff L_s= \int_\R \bigg(\int_\R f_t(u,s)\diff L_s\bigg) \diff u=Z_t,
\end{align*}
where we have applied Lemmas \ref{Fub} and \ref{FubCon}. Consequently, it holds that $\P[X_t=Z_t+X_0\ $for all $t\in\Q_+ ]=1$ which implies $V(X,p;1)^n_{t}=V(Z,p;1)^n_{t}$ for all $n\in\N$ and all $t>0$ almost surely.
\end{proof}

\bibliographystyle{chicago}
 
\end{document}